\date{\today}
\newtheorem{theorem}{Theorem}
\newtheorem{lemma}[theorem]{Lemma}
\newtheorem{prop}[theorem]{Proposition}
\newtheorem{remark}{Remark}
\newtheorem{corollary}[theorem]{Corollary}
\newtheorem{claim}{Claim}
\newtheorem{example}{Example}
\newenvironment{proof-sketch}{\noindent{\bf Sketch of Proof}\hspace*{1em}}{\qed\bigskip}
\newcommand{\RR}{\mathbb R}
\newcommand{\NN}{\mathbb N}
\renewcommand{\leq}{\leqslant}
\renewcommand{\ge}{\geqslant}
\renewcommand{\geq}{\geqslant}
\begin{document}

\title[Nonlinear nonhomogeneous parametric Robin problems]{Positive solutions for nonlinear nonhomogeneous parametric Robin problems}

\author[N.S. Papageorgiou]{Nikolaos S. Papageorgiou}
\address[N.S. Papageorgiou]{National Technical University, Department of Mathematics,
				Zografou Campus, Athens 15780, Greece}
\email{\tt npapg@math.ntua.gr}

\author[V.D. R\u{a}dulescu]{Vicen\c{t}iu D. R\u{a}dulescu}
\address[V.D. R\u{a}dulescu]{Department of Mathematics, Faculty of Sciences,
King Abdulaziz University, P.O. Box 80203, Jeddah 21589, Saudi Arabia  \& Department of Mathematics, University of Craiova, Street A.I. Cuza 13,
          200585 Craiova, Romania}
\email{\tt vicentiu.radulescu@imar.ro}

\author[D.D. Repov\v{s}]{Du\v{s}an D. Repov\v{s}}
\address[D.D. Repov\v{s}]{Faculty of Education and Faculty of Mathematics and Physics, University of Ljubljana, SI-1000 Ljubljana, Slovenia}
\email{\tt dusan.repovs@guest.ames.si}

\keywords{Robin boundary condition, nonlinear nonhomogeneous differential operator, nonlinear regularity, nonlinear maximum principle, bifurcation-type result, extremal positive solution.\\
\phantom{aa} 2010 AMS Subject Classification: 35J20, 35J60}

\begin{abstract}
We study a parametric Robin problem driven by a nonlinear nonhomogeneous differential operator and with a superlinear Carath\'eodory reaction term. We prove a bifurcation-type theorem for small values of the parameter. Also, we show that as the parameter $\lambda>0$ approaches zero we can find positive solutions with arbitrarily big and arbitrarily small Sobolev norm. Finally we show that for every admissible parameter value there is a smallest positive solution $u^*_{\lambda}$ of the problem and we investigate the properties of the map $\lambda\mapsto u^*_{\lambda}$.
\end{abstract}

\maketitle


\section{Introduction}

Let $\Omega\subseteq \RR^N$ be a bounded domain with a $C^2$-boundary $\partial\Omega$. In this paper, we study the following nonlinear, nonhomogeneous parametric Robin problem
\begin{equation}\tag{$P_{\lambda}$}\label{eqp}
	\left\{\begin{array}{ll}
		-{\rm div}\, a(Du(z))+\xi(z)u(z)^{p-1}=\lambda f(z,u(z))\ \mbox{in}\ \Omega,\\
		\frac{\partial u}{\partial n_a}+\beta(z)u^{p-1}=0\ \mbox{on}\ \partial\Omega,\ u> 0,\ \lambda>0,\ 1<p<\infty\,.&
	\end{array}\right\}
\end{equation}

In this problem, the map $a:\RR^N\rightarrow\RR^N$ is monotone continuous (hence maximal monotone, too) and satisfies certain other regularity and growth conditions, listed in hypotheses $H(a)$ below. These conditions on $a(\cdot)$, are general enough to incorporate in our framework many differential operators of interest such as the $p$-Laplacian differential operator $(1<p<\infty)$ and the $(p,q)$-Laplacian differential operator ($1<q<p<\infty$). The differential operator in \eqref{eqp} is not in general $(p-1)$-homogeneous and this is a source of technical difficulties in the analysis of problem \eqref{eqp}. Also $\xi\in L^{\infty}(\Omega)$ and $\xi\geq 0$. In the reaction term (right-hand side of the equation) $\lambda>0$ is a parameter and $f(z,x)$ is a Carath\'eodory function (that is, for all $x\in\RR$, the mapping $z\mapsto f(z,x)$ is measurable and for almost all $z\in\Omega$, the mapping $x\mapsto f(z,x)$ is continuous) which exhibits $(p-1)$-superlinear growth in the $x$-variable near
  $+\infty$, but without satisfying the usual for superlinear problems Ambrosetti-Rabinowitz condition (AR condition for short). Instead we use a more general condition, which permits the consideration of $(p-1)$-superlinear functions with ``slower'' growth near $+\infty$ which fail to satisfy the AR-condition (see the examples below). Also near $0^+$, the nonlinearity $f(z,\cdot)$ has a concave term (that is, a ($p-1$)-sublinear term).

In the boundary condition, $\frac{\partial u}{\partial n_a}$ denotes the generalized normal derivative (the conormal derivative) of $u$, defined by extension of
$$\frac{\partial u}{\partial n_a}=(a(Du),n)_{\RR^N}\ \mbox{for all}\ u\in C^1(\overline{\Omega}),$$
with $n(\cdot)$ being the outward unit normal on $\partial\Omega$. This kind of directional derivative on the boundary $\partial\Omega$ is dictated by the nonlinear Green's identity (see Gasinski and Papageorgiou \cite[p. 210]{14}) and is also used by Lieberman \cite{22}. For the boundary coefficient $\beta(z)$, we assume that
$$\beta\in C^{0,\alpha}(\partial\Omega)\ \mbox{for some}\ \alpha\in(0,1),\ \beta(z)\geq 0\ \mbox{for all}\ z\in\partial\Omega.$$

We assume that
$$\xi\neq 0\ \mbox{or}\ \beta\neq 0.$$

If $\beta=0$, then we recover the Neumann problem.

Our aim in this paper is to study the precise dependence of the set of positive solutions on the parameter $\lambda>0$. In this direction, we prove a bifurcation-type theorem for small values of the parameter, that is, we show that there exists a critical parameter value $\lambda^*\in(0,+\infty)$ such that
\begin{itemize}
	\item for all $\lambda\in(0,\lambda^*)$ problem \eqref{eqp} admits at least two positive solutions;
	\item for $\lambda=\lambda^*$ problem \eqref{eqp} has at least one positive solutions;
	\item for all $\lambda>\lambda^*$ problem \eqref{eqp} has no positive solutions.
\end{itemize}

Moreover, we show that if $\lambda_n\rightarrow 0^+$, then we can find pairs $\{u_{\lambda_n},\hat{u}_{\lambda_n}\}_{n\in\NN}$ of positive solutions such that
$$||u_{\lambda_n}||\rightarrow 0\ \mbox{and}\ ||\hat{u}_{\lambda_n}||\rightarrow+\infty\ \mbox{as}\ n\rightarrow\infty.$$

Here $||\cdot||$ denotes the norm of the Sobolev space $W^{1,p}(\Omega)$.

Finally if $\lambda\in(0,\lambda^*)$, then we show that problem \eqref{eqp} has a smallest positive solution $u^*_{\lambda}$ and we investigate the monotonicity and continuity properties of the map $\lambda\mapsto u^*_{\lambda}$.

Parametric problems with competing nonlinearities (``concave-convex'' problems), were first investigated by Ambrosetti, Brezis and Cerami \cite{3} for semilinear Dirichlet problems driven by the Laplacian (that is, $p=2$) and with zero potential (that is, $\xi\equiv 0$). Their work was extended to Dirichlet problems driven by the $p$-Laplacian $(1<p<\infty)$ by Garcia Azorero, Manfredi and Peral Alonso \cite{13}, Guo and Zhang \cite{18}, Hu and Papageorgiou \cite{20}. All the aforementioned papers, consider ``concave-convex'' reaction terms modelled after the function
$$\lambda x^{q-1}+x^{r-1}\ \mbox{for all}\ x\geq 0,\ \mbox{with}\ q<p<r<p^*.$$

So, in their equations the concave and convex inputs in the reaction are decoupled and the parameter $\lambda>0$ multiplies only the concave term.

Closer to problem \eqref{eqp} are the works of Gasinski and Papageorgiou \cite{16}, Papageorgiou and R\u adulescu \cite{33} and Aizicovici, Papageorgiou and  Staicu \cite{2bis}. Both papers deal with equations driven by the $p$-Laplacian and have a reaction term of the form $\lambda f(z,x)$ (as is the case here). In Gasinski and Papageorgiou \cite{16} the problem is Dirichlet and the authors prove bifurcation-type results for small and big values of the parameter $\lambda>0$. In Papageorgiou and R\u adulescu \cite{33} the problem is Robin (with $\xi\equiv 0,\beta\neq 0$) and the authors prove a bifurcation-type result for large values of the parameter. Finally, we mention also the related recent work of Papageorgiou and Smyrlis \cite{38} who deal with singular Dirichlet problems and of Papageorgiou and R\u adulescu \cite{34} dealing with $p$-Laplacian Robin problems with competing nonlinearities.

We denote by $||\,\cdot\,||_p$ the usual $L^p$-norm in $L^p(\Omega)$ and by $|\,\cdot\,|$ the Euclidean norm on $\RR^N$. Throughout this paper, the symbol $\stackrel{w}{\rightarrow}$ is used for the weak convergence.

\section{Mathematical Background-Auxiliary Results}

Let $X$ be a Banach space and $X^*$ its topological dual. By $\left\langle \cdot,\cdot\right\rangle$ we denote the duality brackets for the pair $(X^*,X)$. If $\varphi\in C^1(X,\RR)$, we say that $\varphi$ satisfies the ``Cerami condition'' (the ``C-condition'' for short), if the following property holds:
\begin{center}
``Every sequence $\{u_n\}_{n\geq 1}\subseteq X$ such that $\{\varphi(u_n)\}_{n\geq 1}\subseteq\RR$ is bounded and
$$(1+||u_n||)\varphi'(u_n)\rightarrow 0\ \mbox{in}\ X^*\ \mbox{as}\ n\rightarrow\infty,$$
admits a strongly convergent subsequence''.
\end{center}

This compactness-type condition on the functional $\varphi$, leads to a deformation theorem from which one can derive the minimax theory of the critical values of $\varphi$. Central in that theory, is the well-known ``mountain pass theorem'' due to Ambrosetti and Rabinowitz \cite{4}, stated here in a slightly more general form (see Gasinski and Papageorgiou \cite[p. 648]{14}).
\begin{theorem}\label{th1}
	Assume that $X$ is a Banach space, $\varphi\in C^1(X,\RR)$ satisfies the C-condition, $u_0,u_1\in X$, $||u_1-u_0||>\rho>0$
	$$\max\{\varphi(u_0),\varphi(u_1)\}<\inf[\varphi(u):||u-u_0||=\rho]=m_{\rho}$$
	and $c=\inf\limits_{\gamma\in\Gamma}\max\limits_{0\leq t\leq 1}\varphi(\gamma(t))$ with $\Gamma=\{\gamma\in C([0,1],X):\gamma(0)=u_0,\gamma(1)=u_1\}$.
	Then $c\geq m_{\rho}$ and $c$ is a critical value of $\varphi$.
\end{theorem}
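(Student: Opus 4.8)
\begin{proof-sketch}
The plan is to argue by contradiction, the main tool being a deformation theorem adapted to the Cerami compactness condition.

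First I would verify $c\geq m_{\rho}$. Fix $\gamma\in\Gamma$. The map $t\mapsto\|\gamma(t)-u_0\|$ is continuous on $[0,1]$, vanishes at $t=0$, and equals $\|u_1-u_0\|>\rho$ at $t=1$; so by the intermediate value theorem there is $t_{\gamma}\in(0,1)$ with $\|\gamma(t_{\gamma})-u_0\|=\rho$, whence $\varphi(\gamma(t_{\gamma}))\geq m_{\rho}$ and $\max_{0\leq t\leq 1}\varphi(\gamma(t))\geq m_{\rho}$. Taking the infimum over $\Gamma$ gives $c\geq m_{\rho}$.

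Next, suppose $c$ is not a critical value, i.e. $K_c:=\{u\in X:\varphi'(u)=0,\ \varphi(u)=c\}=\emptyset$. Since $\max\{\varphi(u_0),\varphi(u_1)\}<m_{\rho}\leq c$, fix $\varepsilon_0\in\big(0,\ c-\max\{\varphi(u_0),\varphi(u_1)\}\big)$. The crucial ingredient---valid because $\varphi\in C^1(X,\RR)$ satisfies the C-condition and $K_c=\emptyset$---is a deformation $h\in C([0,1]\times X,X)$ and a number $\varepsilon\in(0,\varepsilon_0)$ such that: (i)~$h(0,\cdot)=\mathrm{id}_X$;\ (ii)~$\varphi(h(1,u))\leq c-\varepsilon$ whenever $\varphi(u)\leq c+\varepsilon$;\ (iii)~$h(t,u)=u$ for all $t\in[0,1]$ whenever $\varphi(u)\leq c-\varepsilon_0$ or $\varphi(u)\geq c+\varepsilon_0$. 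Granting this, choose $\gamma\in\Gamma$ with $\max_{0\leq t\leq 1}\varphi(\gamma(t))\leq c+\varepsilon$ (possible since $c$ is an infimum) and put $\tilde{\gamma}(t):=h(1,\gamma(t))$. Because $\varphi(u_0),\varphi(u_1)<c-\varepsilon_0$, property (iii) gives $\tilde{\gamma}(0)=u_0$ and $\tilde{\gamma}(1)=u_1$, so $\tilde{\gamma}\in\Gamma$; while (ii) yields $\max_{0\leq t\leq 1}\varphi(\tilde{\gamma}(t))\leq c-\varepsilon<c$, contradicting the definition of $c$. Hence $K_c\neq\emptyset$, i.e. $c$ is a critical value of $\varphi$.

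The main obstacle is establishing the deformation $h$ under the Cerami condition rather than under the more familiar Palais--Smale condition. One first uses the C-condition to find $\delta>0$ and $\varepsilon\in(0,\varepsilon_0)$ so that $(1+\|u\|)\,\|\varphi'(u)\|_{*}\geq\delta$ on the strip $S:=\{u\in X:c-\varepsilon\leq\varphi(u)\leq c+\varepsilon\}$; indeed, a sequence violating every such bound would, after relabelling, satisfy $\varphi(u_n)\to c$ and $(1+\|u_n\|)\varphi'(u_n)\to 0$ in $X^*$, hence by the C-condition converge along a subsequence to some $u\in K_c=\emptyset$, a contradiction. Then one constructs a locally Lipschitz pseudo-gradient vector field $V$ on the set of regular points, normalised so that $\|V(u)\|\leq 1+\|u\|$ and $\langle\varphi'(u),V(u)\rangle$ is bounded below by a fixed positive multiple of $\min\{1,\|\varphi'(u)\|_{*}\}\,\|\varphi'(u)\|_{*}$, cut off by a Urysohn function supported in a neighbourhood of $S$, and integrates the flow $\dot{\sigma}(t,u)=-V(\sigma(t,u))$, $\sigma(0,u)=u$. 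Here the bound $\|V(u)\|\leq 1+\|u\|$ is exactly what Gronwall's inequality needs to rule out finite-time blow-up, so the flow is globally defined; along it $\varphi$ is nonincreasing and decreases at a rate controlled by $\delta$ on $S$, so running the flow for a time of order $\varepsilon/\delta$ and rescaling the time parameter to $[0,1]$ produces the required $h$. This is precisely the step where the weight $1+\|u\|$ built into the Cerami condition, as opposed to the unweighted Palais--Smale condition, is used in an essential way.
\end{proof-sketch}
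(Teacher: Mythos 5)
The paper does not actually prove Theorem \ref{th1}: it is quoted as a known result (the mountain pass theorem under the Cerami condition) with a reference to Gasinski and Papageorgiou \cite{14}, so there is no in-paper argument to compare against. Your sketch is the standard deformation-theorem proof and its architecture is correct: the intermediate value theorem argument for $c\geq m_{\rho}$ is complete, and the contradiction scheme (if $K_c=\emptyset$, deform a near-optimal path below level $c$ while keeping the endpoints fixed) is exactly right, including the correct choice of $\varepsilon_0$ guaranteeing that $u_0,u_1$ lie in the region where the deformation is the identity. The extraction of the lower bound $(1+\|u\|)\,\|\varphi'(u)\|_*\geq\delta$ on a strip around level $c$ from the C-condition and $K_c=\emptyset$ is also correct.

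The one point that does not work as written is the normalization of the pseudogradient field, and it sits precisely at the step you single out as essential. The bound $(1+\|u\|)\,\|\varphi'(u)\|_*\geq\delta$ on the strip $S$ only gives $\|\varphi'(u)\|_*\geq\delta/(1+\|u\|)$, which tends to $0$ as $\|u\|\to\infty$. Hence a field satisfying $\langle\varphi'(u),V(u)\rangle\geq c_0\min\{1,\|\varphi'(u)\|_*\}\,\|\varphi'(u)\|_*$ (the Palais--Smale-type normalization you state) yields a decrease rate of order $\delta^2/(1+\|u\|)^2$ on $S$, which is not bounded away from zero, so no finite time of order $\varepsilon/\delta$ pushes all of $\{\varphi\leq c+\varepsilon\}$ below $c-\varepsilon$. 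The correct choice is $V(u)=(1+\|u\|)\,V_0(u)/\|V_0(u)\|$ with $V_0$ a standard pseudogradient field (so $\|V_0(u)\|\leq 2\|\varphi'(u)\|_*$ and $\langle\varphi'(u),V_0(u)\rangle\geq\|\varphi'(u)\|_*^2$): then $\|V(u)\|=1+\|u\|$, so Gronwall still gives global existence of the flow as you observe, and $\langle\varphi'(u),V(u)\rangle\geq\tfrac{1}{2}(1+\|u\|)\,\|\varphi'(u)\|_*\geq\tfrac{\delta}{2}$ on $S$, which is the uniform rate your argument needs. With this correction the sketch is a complete and standard proof.
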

\begin{remark}
	The result is in fact true more generally in Banach-Finsler manifolds.
\end{remark}

By $||\cdot||$ we denote the norm of the Sobolev space $W^{1,p}(\Omega)$ defined by
$$||u||=[||u||^p_p+||Du||^p_p]^{1/p}\ \mbox{for all}\ u\in W^{1,p}(\Omega).$$

In addition to the Sobolev space $W^{1,p}(\Omega)$ we will also use the Banach space $C^1(\overline{\Omega})$ and certain closed subspaces of it and the ``boundary'' Lebesgue spaces $L^q(\partial\Omega)$ $(1\leq q\leq \infty)$. The space $C^1(\overline{\Omega})$ is an ordered Banach space with positive (order) cone given by
$$C_+=\{u\in C^1(\overline{\Omega}):u(z)\geq 0\ \mbox{for all}\ z\in\overline{\Omega}\}.$$

The cone has a nonempty interior given by
$$D_+=\{u\in C^1(\overline{\Omega}):u(z)>0\ \mbox{for all}\ z\in\overline{\Omega}\}.$$

On $\partial\Omega$ we consider the $(N-1)$-dimensional Hausdorff (surface) measure $\sigma(\cdot)$. Using this measure we can define in the usual way the boundary Lebesgue spaces $L^q(\partial\Omega)$ $(1\leq q\leq\infty)$. From the theory of Sobolev spaces, we know that there exists a unique continuous linear map $\gamma_0:W^{1,p}(\Omega)\rightarrow L^p(\partial\Omega)$ known as the ``trace map'' such that
$$\gamma_0(u)=u|_{\partial\Omega}\ \mbox{for all}\ u\in W^{1,p}(\Omega)\cap C(\overline{\Omega}).$$

We know that
$${\rm im}\,\gamma_0=W^{\frac{1}{p'},p}(\partial\Omega)\ \left(\frac{1}{p}+\frac{1}{p'}=1\right)\ \mbox{and}\ {\rm ker}\,\gamma_0=W^{1,p}_0(\Omega).$$

The trace map $\gamma_0$ is compact into $L^q(\partial\Omega)$ for all $q\in\left[1,\frac{(N-1)p}{N-p}\right)$ if $N>p$ and into $L^q(\partial\Omega)$ for all $q\geq 1$ if $p\geq N$. In the sequel, for the sake of notational simplicity we will drop the use of the map $\gamma_0$. The restrictions of all Sobolev functions on $\partial\Omega$ are understood in the sense of traces.

Let $\vartheta\in C^1(0,+\infty)$ with $\vartheta(t)>0$ for all $t>0$ and assume that
\begin{equation}\label{eq1}
	0<\hat{c}\leq\frac{\vartheta'(t)t}{\vartheta(t)}\leq c_0\ \mbox{and}\ c_1t^{p-1}\leq\vartheta(t)\leq c_2(1+t^{p-1})\ \mbox{for all}\ t>0,\ \mbox{some}\ c_1,c_2>0.
\end{equation}

Our hypotheses on the map $a(\cdot)$, are the following:

\smallskip
$H(a):$ $a(y)=a_0(|y|)y$ for all $y\in \RR^N$ with $a_0(t)>0$ for all $t>0$ and
\begin{itemize}
	\item[(i)] $a_0\in C^1(0,\infty),\ t\mapsto a_0(t)t$ is strictly increasing on $(0,+\infty)$, $a_0(t)t\rightarrow 0^+$ as $t\rightarrow 0^+$ and
	$$\lim\limits_{t\rightarrow 0^+}\frac{a'_0(t)t}{a_0(t)}>-1;$$
	\item[(ii)] there exists $c_3>0$ such that
	$$|\nabla a(y)|\leq c_3\frac{\vartheta(|y|)}{|y|}\ \mbox{for all}\ y\in \RR^N\backslash\{0\};$$
	\item[(iii)] $(\nabla a(y)\xi,\xi)_{\RR^N}\geq\frac{\vartheta(|y|)}{|y|}|\xi|^2$ for all $y\in\RR^N\backslash\{0\}$, all $\xi\in\RR^N$;
	\item[(iv)] if $G_0(t)=\int^t_0a_0(s)sds,$
\end{itemize}
then there exist $1<q<p<r_0<p^*$ (recall that $p^*=
	\frac{Np}{N-p}$ if $N>p$ and $p^*=
	+\infty$ if $N\leq p$) such that
\begin{eqnarray*}
	&&\limsup\limits_{t\rightarrow 0^+}\frac{q G_0(t)}{t^q}\leq c^*,\ t\mapsto G_0(t^{1/q})\ \mbox{is convex}\\
	&&r_0G_0(t)-a_0(t)t^2\geq\bar{c}t^p,\ pG_0(t)-a_0(t)t^2\geq-\bar{c}_0
\end{eqnarray*}
for all $t>0$ and some $\bar{c},\ \bar{c}_0>0$.
\begin{remark}
	Hypotheses $H(a)(i),(ii),(iii)$ are motivated by the nonlinear regularity theory of Lieberman \cite{23} and the nonlinear maximum principle of Pucci and Serrin \cite{40}. Hypothesis $H(a)(iv)$ serves the particular needs of our problem, but it is not restrictive and it is satisfied in many cases of interest as the examples below illustrate. Similar conditions were also used in the recent works of the authors, see Papageorgiou and R\u adulescu \cite{29, 35, 31}.
\end{remark}

Hypotheses $H(a)(i),(ii),(iii)$ imply that $G_0(\cdot)$ is strictly convex and strictly increasing. We set $G(y)=G_0(|y|)$ for all $y\in\RR^N$. So, $G(\cdot)$ is convex, $G(0)=0$ and
$$\nabla G(y)=G'_0(|y|)\frac{y}{|y|}=a_0(|y|)y\ \mbox{for all}\ y\in\RR^N\backslash\{0\},\ \nabla G(0)=0.$$

Therefore $G(\cdot)$ is the primitive of $a(\cdot)$. From the convexity of $G(\cdot)$ and since $G(0)=0$, we have
\begin{equation}\label{eq2}
	G(y)\leq (a(y),y)_{\RR^N}\ \mbox{for all}\ y\in\RR^N.
\end{equation}

The next lemma summarizes the main properties of the map $a(\cdot)$, which we will use in the sequel. These properties are straightforward consequences of properties $H(a)(i),(ii),(iii)$ and of (\ref{eq1}).
\begin{lemma}\label{lem2}
	If hypotheses $H(a)(i),(ii),(iii)$ hold, then
	\begin{itemize}
		\item[(a)] $y\mapsto a(y)$ is continuous and strictly monotone (hence maximal monotone, too);
		\item[(b)] $|a(y)|\leq c_4(1+|y|^{p-1})$ for all $y\in\RR^N$, some $c_4>0$;
		\item[(c)] $(a(y),y)_{\RR^N}\geq\frac{c_1}{p-1}|y|^p$ for all $y\in\RR^N$.
	\end{itemize}
\end{lemma}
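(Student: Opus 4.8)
All three assertions follow directly from the structure $a(y)=a_0(|y|)y$ together with $H(a)(i)$--$(iii)$ and the two-sided bounds \eqref{eq1} on $\vartheta$; I would treat them one at a time, the only point needing attention being the possible non-differentiability of $a$ at the origin. For (a), continuity of $a$ on $\RR^N\setminus\{0\}$ is immediate from $a_0\in C^1(0,\infty)$, while continuity at $0$ follows from $H(a)(i)$, since $|a(y)|=a_0(|y|)|y|\to 0^+=|a(0)|$ as $y\to 0$. For strict monotonicity, given $y\neq y'$ I would introduce the scalar function $\psi(t)=(a(y'+t(y-y')),\,y-y')_{\RR^N}$, $t\in[0,1]$; on any subinterval on which $y'+t(y-y')$ stays off $0$ it is $C^1$ with
$$\psi'(t)=(\nabla a(y'+t(y-y'))(y-y'),\,y-y')_{\RR^N}\ge\frac{\vartheta(|y'+t(y-y')|)}{|y'+t(y-y')|}\,|y-y'|^2>0$$
by $H(a)(iii)$, and if the segment passes through $0$ I would split at that single parameter value and pass to one-sided limits, using the continuity of $a$ at $0$. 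In all cases $(a(y)-a(y'),\,y-y')_{\RR^N}=\psi(1)-\psi(0)>0$. Being in addition continuous and everywhere defined on $\RR^N$, $a$ is then maximal monotone (a continuous, everywhere defined monotone operator on $\RR^N$ is maximal monotone; more generally this holds for demicontinuous monotone operators from a reflexive Banach space into its dual, cf. Gasinski--Papageorgiou \cite{14}).

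For (b), I would invoke $H(a)(ii)$ and the upper estimate in \eqref{eq1}. For $y\neq 0$, $\nabla a(y)=a_0(|y|)I+a_0'(|y|)|y|^{-1}\,y\otimes y$, so $a_0(|y|)$ is an eigenvalue of $\nabla a(y)$ and hence $a_0(|y|)\le|\nabla a(y)|\le c_3\vartheta(|y|)/|y|$; multiplying by $|y|$ yields $|a(y)|=a_0(|y|)|y|\le c_3\vartheta(|y|)\le c_3c_2(1+|y|^{p-1})$ (alternatively, valid for all $N$, integrate $\nabla a$ along the ray from the sphere $|y|=1$ and use the continuity, hence boundedness, of $a$ on the unit ball). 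Since the estimate is trivial at $y=0$, one may take $c_4=c_3c_2$.

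For (c), set $g(t)=(a(ty),y)_{\RR^N}=a_0(t|y|)t|y|^2$ for $t>0$; then $H(a)(i)$ gives $g(0^+)=0$ while $H(a)(iii)$ gives $g'(t)\ge\vartheta(t|y|)|y|^2/(t|y|)>0$, so $g(1)=\int_0^1 g'(t)\,dt$ as a convergent improper integral of a nonnegative integrand, and using $\vartheta(s)\ge c_1s^{p-1}$ from \eqref{eq1} we get
$$(a(y),y)_{\RR^N}=\int_0^1 g'(t)\,dt\ge c_1|y|^p\int_0^1 t^{p-2}\,dt=\frac{c_1}{p-1}|y|^p.$$
The computations in (a) and (c) are organized through one-variable functions precisely so as to accommodate the fact that $\nabla a$ need not exist at $0$; verifying that these limits and improper integrals converge is the only place any care is required, and in (c) it is exactly where the standing assumption $p>1$ enters, the integral $\int_0^1 t^{p-2}\,dt$ being finite precisely when $p>1$.
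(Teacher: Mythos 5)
Your proof is correct; the paper itself gives no argument for this lemma, stating only that the properties are ``straightforward consequences'' of $H(a)(i),(ii),(iii)$ and \eqref{eq1}, and your write-up supplies exactly the standard details that are being elided (line-integrating $\nabla a$ along segments and rays, using $H(a)(iii)$ for monotonicity and for the lower bound in (c) via $\vartheta(s)\geq c_1s^{p-1}$, and $H(a)(ii)$ with $\vartheta(s)\leq c_2(1+s^{p-1})$ for the growth bound in (b)). The only point worth flagging is that the eigenvalue argument in (b) requires $N\geq 2$ for $a_0(|y|)$ to be an eigenvalue of the symmetric matrix $\nabla a(y)$, but you already note the ray-integration alternative that covers all $N$, so nothing is missing.
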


This lemma and (\ref{eq2}) lead to the following growth estimates for the primitive $G(\cdot)$.
\begin{corollary}\label{cor3}
	If hypotheses $H(a)(i),(ii),(iii)$ hold, then $\frac{c_1}{p(p-1)}|y|^p\leq G(y)\leq c_5(1+|y|^p)$ for all $y\in\RR^N$, some $c_5>0$.
\end{corollary}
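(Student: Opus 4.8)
The plan is to obtain both inequalities by reading off the vector estimates of Lemma \ref{lem2} as one-dimensional estimates for $a_0(\cdot)$ and integrating along rays. Since $a(y)=a_0(|y|)y$ and $G(y)=G_0(|y|)$ with $G_0(t)=\int_0^t a_0(s)s\,ds$, it suffices to establish $\frac{c_1}{p(p-1)}t^p\le G_0(t)\le c_5(1+t^p)$ for all $t>0$ and then substitute $t=|y|$. Note first that the integrand $s\mapsto a_0(s)s$ is continuous on $(0,\infty)$ (as $a_0\in C^1(0,\infty)$) and tends to $0^+$ as $s\to 0^+$ by $H(a)(i)$; being moreover increasing by $H(a)(i)$, it is bounded on every interval $(0,t]$, so the integral defining $G_0(t)$ converges.

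For the lower bound, I would rewrite Lemma \ref{lem2}(c): evaluating $(a(y),y)_{\RR^N}=a_0(|y|)|y|^2\ge\frac{c_1}{p-1}|y|^p$ at $|y|=s$ gives $a_0(s)s\ge\frac{c_1}{p-1}s^{p-1}$ for all $s>0$, and integrating from $0$ to $t$ yields
$$G_0(t)=\int_0^t a_0(s)s\,ds\ge\frac{c_1}{p-1}\int_0^t s^{p-1}\,ds=\frac{c_1}{p(p-1)}t^p.$$
For the upper bound, I would use Lemma \ref{lem2}(b) in the scalar form $a_0(s)s=|a(y)|\le c_4(1+s^{p-1})$ (with $s=|y|$), and integrate to get $G_0(t)\le c_4\bigl(t+\tfrac{1}{p}t^p\bigr)$. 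Since $t\le 1+t^p$ for all $t>0$ (trivially when $t\le 1$, and because $p>1$ when $t\ge 1$), this gives $G_0(t)\le c_5(1+t^p)$ with, e.g., $c_5=c_4(1+\tfrac1p)$. Substituting $t=|y|$ in both displays proves the corollary.

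There is essentially no obstacle here: the statement is an immediate consequence of Lemma \ref{lem2} once one interprets the estimates for $a(\cdot)$ as scalar estimates for $a_0(\cdot)$, and the only point requiring a word of justification is the integrability of $a_0(s)s$ at the origin, which $H(a)(i)$ supplies. As an alternative route for the upper bound, one may bypass integration altogether and use \eqref{eq2} together with Lemma \ref{lem2}(b): $G(y)\le(a(y),y)_{\RR^N}\le|a(y)|\,|y|\le c_4(1+|y|^{p-1})|y|\le c_5(1+|y|^p)$.
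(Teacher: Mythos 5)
Your proof is correct and follows essentially the route the paper intends: the lower bound by integrating the scalar form of Lemma \ref{lem2}(c) (equivalently, integrating along rays), and the upper bound either by integrating Lemma \ref{lem2}(b) or, as in your alternative and as the paper indicates, by combining \eqref{eq2} with Lemma \ref{lem2}(b). Nothing further is needed.
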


The examples which follow confirm the generality of hypotheses $H(a)$.
\begin{example}
	The following maps satisfy hypotheses $H(a)$ above
	\begin{itemize}
		\item[(a)] $a(y)=|y|^{p-2}y$ with $1<p<\infty$.

		The corresponding differential operator is the $p$-Laplacian defined by
		$$\Delta_pu={\rm div}\,(|Du|^{p-2}Du)\ \mbox{for all}\ u\in W^{1,p}(\Omega).$$
		\item[(b)] $a(y)=|y|^{p-2}y+|y|^{q-2}y$ with $1<q<p<\infty.$
		
		The corresponding differential operator is the $(p,q)$-Laplacian defined by
		$$\Delta_pu+\Delta_qu\ \mbox{for all}\ u\in W^{1,p}(\Omega).$$
		
		Such operators arise in problems of mathematical physics, see Benci, D'Avenia, Fortunato and Pisani \cite{6} (quantum physics) and Cherfils and Ilyasov \cite{8} (plasma physics). Recently there have been some existence and multiplicity results for such equations. We mention the papers of Aizicovici, Papageorgiou and Staicu \cite{1,2}, Cingolani and Degiovanni \cite{9}, Mugnai and Papageorgiou \cite{26}, Papageorgiou and R\u adu\-les\-cu \cite{28, 30, 32}, Papageorgiou, R\u adulescu and Repov\v{s} \cite{37}, Papageorgiou and Winkert \cite{39}, Sun, Zhang and Su \cite{41}.
		\item[(c)] $a(y)=(1+|y|^2)^{\frac{p-2}{2}}y$ with $1<p<\infty.$
		
		The corresponding differential operator is the generalized $p$-mean curvature differential operator defined by
		$${\rm div}\,\left((1+|Du|^2)^{\frac{p-2}{2}}Du\right)\ \mbox{for all}\ u\in W^{1,p}(\Omega).$$
		\item[(d)] $a(y)=|y|^{p-2}y+\frac{|y|^{p-2}y}{1+|y|^p}$ with $1<p<\infty$.
		
		The corresponding differential operator is defined by
		$$\Delta_pu+{\rm div}\,\left(\frac{|Du|^{p-2}Du}{1+|Du|^p}\right)\ \mbox{for all}\ u\in W^{1,p}(\Omega).$$
		
		This operator arises in problems of plasticity (see Fuchs and Osmolovski \cite{12}).
	\end{itemize}
\end{example}

Let $A:W^{1,p}(\Omega)\rightarrow W^{1,p}(\Omega)^*$ be the nonlinear map defined by
$$\left\langle A(u),h\right\rangle=\int_{\Omega}(a(Du),Dh)_{\RR^N}dz\ \mbox{for all}\ u,h\in W^{1,p}(\Omega).$$

The next proposition is a particular case of a more general result due to Gasinski and Papageorgiou \cite{15}.
\begin{prop}\label{prop4}
	If hypotheses $H(a)(i),(ii),(iii)$ hold, then the map $A:W^{1,p}(\Omega)\rightarrow W^{1,p}(\Omega)^*$ is continuous, monotone (hence maximal monotone too) and of type $(S)_+$, that is,
	$$``u_n\stackrel{w}{\rightarrow}u\ \mbox{in}\ W^{1,p}(\Omega)\ \mbox{and}\ \limsup\limits_{n\rightarrow\infty}\left\langle A(u_n),u_n-u\right\rangle\leq 0\Rightarrow u_n\rightarrow u\ \mbox{in}\ W^{1,p}(\Omega)."$$
\end{prop}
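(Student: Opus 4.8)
The plan is to verify the three asserted properties in turn, dispatching continuity and monotonicity quickly and concentrating the work on the $(S)_+$ property. For \emph{continuity}, suppose $u_n\to u$ in $W^{1,p}(\Omega)$, so that $Du_n\to Du$ in $L^p(\Omega,\RR^N)$. Passing to an arbitrary subsequence and then to a further one, I may assume $Du_n\to Du$ a.e. with an $L^p$-dominating function; then $a(Du_n)\to a(Du)$ a.e. by continuity of $a(\cdot)$, and since $|a(Du_n)|\leq c_4(1+|Du_n|^{p-1})$ by Lemma \ref{lem2}(b) the sequence $\{a(Du_n)\}$ is dominated in $L^{p'}(\Omega,\RR^N)$, so the dominated convergence theorem gives $a(Du_n)\to a(Du)$ in $L^{p'}(\Omega,\RR^N)$ and hence $A(u_n)\to A(u)$ in $W^{1,p}(\Omega)^*$ by H\"older's inequality. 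The subsequence principle upgrades this to convergence of the full sequence. \emph{Monotonicity} is immediate, since $\langle A(u)-A(v),u-v\rangle=\int_\Omega(a(Du)-a(Dv),Du-Dv)_{\RR^N}\,dz\geq 0$ by the pointwise monotonicity of $a(\cdot)$ from Lemma \ref{lem2}(a); as $A$ is everywhere defined and continuous, hence hemicontinuous, maximal monotonicity follows from the standard criterion for monotone hemicontinuous operators from a Banach space into its dual.

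For the $(S)_+$ property, suppose $u_n\stackrel{w}{\rightarrow}u$ in $W^{1,p}(\Omega)$ and $\limsup_{n\to\infty}\langle A(u_n),u_n-u\rangle\leq 0$. Since $\langle A(u),u_n-u\rangle\to 0$ and, by monotonicity, $\langle A(u_n)-A(u),u_n-u\rangle\geq 0$, I obtain $\langle A(u_n),u_n-u\rangle\to 0$ and therefore $\int_\Omega(a(Du_n)-a(Du),Du_n-Du)_{\RR^N}\,dz\to 0$. The integrand is nonnegative, so it converges to $0$ in $L^1(\Omega)$, and along a subsequence a.e. Fixing such a point $z$ and writing $y_n=Du_n(z)$, $y=Du(z)$, I claim $y_n\to y$: a subsequence with $|y_n|\to\infty$ is impossible because Lemma \ref{lem2}(c) together with the growth bound forces $(a(y_n)-a(y),y_n-y)_{\RR^N}\to+\infty$, while a bounded subsequence with $y_n\to y^*\neq y$ is excluded by strict monotonicity of $a(\cdot)$. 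Hence $Du_n\to Du$ a.e. in $\Omega$.

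It remains to upgrade this to $Du_n\to Du$ in $L^p(\Omega,\RR^N)$. From $\langle A(u_n),u_n-u\rangle\to 0$ I get $\langle A(u_n),u_n\rangle=\langle A(u_n),u_n-u\rangle+\int_\Omega(a(Du_n),Du)_{\RR^N}\,dz\to\int_\Omega(a(Du),Du)_{\RR^N}\,dz=\langle A(u),u\rangle$, the middle term converging because $a(Du_n)\stackrel{w}{\rightarrow}a(Du)$ in $L^{p'}(\Omega,\RR^N)$ (a.e.\ convergence plus the $L^{p'}$-bound). Thus $\int_\Omega(a(Du_n),Du_n)_{\RR^N}\,dz\to\int_\Omega(a(Du),Du)_{\RR^N}\,dz$, with the integrands nonnegative and convergent a.e., so $(a(Du_n),Du_n)_{\RR^N}\to(a(Du),Du)_{\RR^N}$ in $L^1(\Omega)$ and in particular this sequence is uniformly integrable; since $0\leq\frac{c_1}{p-1}|Du_n|^p\leq(a(Du_n),Du_n)_{\RR^N}$ by Lemma \ref{lem2}(c), the sequence $\{|Du_n|^p\}$ is uniformly integrable too, and with $|Du_n|^p\to|Du|^p$ a.e., Vitali's theorem gives $\|Du_n\|_p\to\|Du\|_p$. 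Then a.e.\ convergence together with convergence of norms yields $Du_n\to Du$ in $L^p(\Omega,\RR^N)$ (via Fatou applied to the nonnegative functions $2^{p-1}(|Du_n|^p+|Du|^p)-|Du_n-Du|^p$), and since $u_n\to u$ in $L^p(\Omega)$ by compactness of the embedding, $u_n\to u$ in $W^{1,p}(\Omega)$; the subsequence principle finishes the argument. I expect the main obstacle to be exactly this last step — converting a.e.\ convergence of the gradients into norm convergence — where the coercivity estimate of Lemma \ref{lem2}(c) is precisely what makes the uniform-integrability argument go through despite $a(\cdot)$ not being $(p-1)$-homogeneous.
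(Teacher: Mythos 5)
Your proof is correct. Note that the paper itself offers no proof of Proposition \ref{prop4}: it simply cites the more general result of Gasinski and Papageorgiou \cite{15}, so there is no in-text argument to compare against. Your write-up is a complete and standard proof of exactly the kind that reference supplies: continuity via the growth bound of Lemma \ref{lem2}(b) and dominated convergence, monotonicity pointwise from Lemma \ref{lem2}(a) with maximality from hemicontinuity, and the $(S)_+$ property by forcing $\int_\Omega(a(Du_n)-a(Du),Du_n-Du)_{\RR^N}dz\rightarrow 0$, extracting a.e.\ convergence of the gradients, and then upgrading to $L^p$-convergence through the coercivity estimate of Lemma \ref{lem2}(c), Scheff\'e/Vitali, and the subsequence principle. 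All the individual steps (the weak $L^{p'}$-convergence of $a(Du_n)$ from boundedness plus a.e.\ convergence, the dichotomy argument excluding $|y_n|\rightarrow\infty$, and the Fatou trick converting norm plus a.e.\ convergence into strong convergence) check out.
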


We introduce the following conditions on the coefficient functions $\xi(\cdot)$ and $\beta(\cdot)$.

\smallskip
$H(\xi):$ $\xi\in L^{\infty}(\Omega),\ \xi(z)\geq 0$ for almost all $z\in\Omega$.

\smallskip
$H(\beta):$ $\beta\in C^{0,\alpha}(\partial\Omega)$ with $\alpha\in(0,1),\ \beta(z)\geq 0$ for all $z\in\partial\Omega$.

\smallskip
$H_0:$ $\xi\neq 0$ or $\beta\neq 0$.
\begin{lemma}\label{lem5}
	If $\hat{\xi}\in L^{\infty}(\Omega),\hat{\xi}(z)\geq 0$ for almost all $z\in\Omega,\hat{\xi}\neq 0$, then there exists $c_6>0$ such that
	$$||Du||^p_p+\int_{\Omega}\hat{\xi}(z)|u|^pdz\geq c_6||u||^p\ \mbox{for all}\ u\in W^{1,p}(\Omega).$$
\end{lemma}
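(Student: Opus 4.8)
The plan is to argue by contradiction via a compactness/normalization argument. Suppose the claimed constant $c_6>0$ does not exist. Then for each $n\in\NN$ one can find $u_n\in W^{1,p}(\Omega)$ with $\|u_n\|=1$ and
$$\|Du_n\|_p^p+\int_{\Omega}\hat{\xi}(z)|u_n|^p\,dz<\frac{1}{n}.$$
Since $\{u_n\}_{n\geq 1}$ is bounded in $W^{1,p}(\Omega)$ and this space is reflexive, after passing to a subsequence I may assume $u_n\stackrel{w}{\rightarrow}u$ in $W^{1,p}(\Omega)$ and, by the compact Sobolev embedding $W^{1,p}(\Omega)\hookrightarrow L^p(\Omega)$, also $u_n\rightarrow u$ in $L^p(\Omega)$.

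Next I would extract information from the two terms separately. From the inequality above, $\|Du_n\|_p\rightarrow 0$, so $Du_n\rightarrow 0$ in $L^p(\Omega,\RR^N)$; combined with $u_n\rightarrow u$ in $L^p(\Omega)$ this shows $u_n\rightarrow u$ strongly in $W^{1,p}(\Omega)$, hence $\|u\|=1$ and $Du=0$, which forces $u$ to be a (nonzero) constant, $u\equiv c\neq 0$, on the connected set $\Omega$. On the other hand, $\hat{\xi}\geq 0$ a.e. gives $0\leq\int_{\Omega}\hat{\xi}(z)|u_n|^p\,dz<1/n$, so $\int_{\Omega}\hat{\xi}(z)|u_n|^p\,dz\rightarrow 0$; passing to the limit using $u_n\rightarrow u$ in $L^p(\Omega)$ and $\hat{\xi}\in L^{\infty}(\Omega)$ yields $\int_{\Omega}\hat{\xi}(z)|u|^p\,dz=0$. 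Since $u\equiv c$, this reads $|c|^p\int_{\Omega}\hat{\xi}(z)\,dz=0$, and because $\hat{\xi}\geq 0$ with $\hat{\xi}\neq 0$ we have $\int_{\Omega}\hat{\xi}\,dz>0$, forcing $c=0$ — contradicting $\|u\|=1$. This contradiction establishes the lemma.

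The only mildly delicate point is justifying the strong convergence $u_n\rightarrow u$ in $W^{1,p}(\Omega)$: it follows because $\|Du_n-Du\|_p=\|Du_n\|_p\rightarrow 0$ and $\|u_n-u\|_p\rightarrow 0$ together give $\|u_n-u\|\rightarrow 0$ from the explicit formula $\|v\|=[\|v\|_p^p+\|Dv\|_p^p]^{1/p}$; no $(S)_+$-type argument is actually needed here since the gradient term is going to zero outright. Everything else is routine, and the hypothesis $\hat{\xi}\neq 0$ is exactly what is used at the last step to conclude $c=0$.
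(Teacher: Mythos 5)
Your proof is correct and follows essentially the same route as the paper's: argue by contradiction with a normalized sequence $\|u_n\|=1$, pass to a weak limit using the compact embedding into $L^p(\Omega)$, deduce that the limit is a constant, and use $\hat{\xi}\geq 0$, $\hat{\xi}\neq 0$ to force that constant to vanish, contradicting the normalization. The only (harmless) difference is that you extract strong $W^{1,p}$-convergence directly from $\|Du_n\|_p\rightarrow 0$ (since both nonnegative terms must tend to zero separately), whereas the paper invokes the sequential weak lower semicontinuity of $\psi$ and then splits into the two cases $\eta=0$ and $\eta\neq 0$; both versions reach the same contradiction.
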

\begin{proof}
	Let $\psi:W^{1,p}(\Omega)\rightarrow\RR_+$ be the $C^1$-functional defined by
	$$\psi(u)=||Du||^p_p+\int_{\Omega}\hat{\xi}(z)|u|^pdz\ \mbox{for all}\ u\in W^{1,p}(\Omega).$$
	
	Arguing by contradiction, suppose that the lemma is not true. Since $\psi(\cdot)$ is $p$-homogeneous, we can find $\{u_n\}_{n\geq 1}\subseteq W^{1,p}(\Omega)$ such that
	\begin{equation}\label{eq3}
		||u_n||=1\ \mbox{for all}\ n\in\NN\ \mbox{and}\ \psi(u_n)\rightarrow 0^+\ \mbox{as}\ n\rightarrow\infty\,.
	\end{equation}
	
	Since $\{u_n\}_{n\geq 1}\subseteq W^{1,p}(\Omega)$ is bounded, we may assume that
	\begin{equation}\label{eq4}
		u_n\stackrel{w}{\rightarrow}u\ \mbox{in}\ W^{1,p}(\Omega)\ \mbox{and}\ u_n\rightarrow u\ \mbox{in}\ L^p(\Omega).
	\end{equation}
	
	The functional $\psi(\cdot)$ is sequentially weakly lower semicontinuous. So, from (\ref{eq3}) and (\ref{eq4}) we obtain
	\begin{eqnarray}\label{eq5}
		&&\psi(u)\leq 0,\nonumber\\
		&\Rightarrow&||Du||^p_p\leq-\int_{\Omega}\hat{\xi}(z)|u|^pdz\leq 0,\\
		&\Rightarrow&u=\eta\in\RR\,.\nonumber
	\end{eqnarray}
	
	If $\eta=0$, then from (\ref{eq4}) we see that
	\begin{eqnarray*}
		&&||Du_n||_p\rightarrow 0,\\
		&\Rightarrow&u_n\rightarrow 0\ \mbox{in}\ W^{1,p}(\Omega),
	\end{eqnarray*}
	a contradiction to the fact that $||u_n||=1$ for all $n\in\NN$.
	
	If $\eta\neq 0$, then from (\ref{eq5}) we have
	$$0\leq-|\eta|^p\int_{\Omega}\hat{\xi}(z)dz<0,$$
	a contradiction.
	
	This proves the lemma.
\end{proof}
\begin{lemma}\label{lem6}
	If $\hat{\beta}\in L^{\infty}(\partial\Omega),\hat{\beta}(z)\geq 0$ for $\sigma$-almost all $z\in\partial\Omega,\hat{\beta}\neq 0$, then there exists $c_7>0$ such that
	$$||Du||^p_p+\int_{\partial\Omega}\hat{\beta}(z)|u|^{p}d\sigma\geq c_7||u||^p\ \mbox{for all}\ u\in W^{1,p}(\Omega).$$
\end{lemma}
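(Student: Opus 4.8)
The plan is to mimic verbatim the contradiction argument used for Lemma \ref{lem5}, the only genuinely new ingredient being the compactness of the trace map on the boundary. First I would introduce the $p$-homogeneous $C^1$-functional $\hat{\psi}:W^{1,p}(\Omega)\rightarrow\RR_+$ defined by
$$\hat{\psi}(u)=||Du||^p_p+\int_{\partial\Omega}\hat{\beta}(z)|u|^pd\sigma\ \mbox{for all}\ u\in W^{1,p}(\Omega),$$
and, arguing by contradiction, produce a sequence $\{u_n\}_{n\geq 1}\subseteq W^{1,p}(\Omega)$ with $||u_n||=1$ for all $n\in\NN$ and $\hat{\psi}(u_n)\rightarrow 0^+$. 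Boundedness lets me pass to a subsequence with $u_n\stackrel{w}{\rightarrow}u$ in $W^{1,p}(\Omega)$ and $u_n\rightarrow u$ in $L^p(\Omega)$; crucially, since the trace map $\gamma_0$ is compact into $L^p(\partial\Omega)$ for every $1<p<\infty$ (when $N>p$ this uses $p<\frac{(N-1)p}{N-p}$, which holds precisely because $p>1$), I also obtain $u_n\rightarrow u$ in $L^p(\partial\Omega)$.

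Next I would note that $\hat{\psi}(\cdot)$ is sequentially weakly lower semicontinuous: the gradient term is sequentially weakly lower semicontinuous by convexity, while the boundary term is in fact sequentially weakly \emph{continuous} thanks to the strong convergence $u_n\rightarrow u$ in $L^p(\partial\Omega)$. Hence $\hat{\psi}(u)\leq\liminf_{n\rightarrow\infty}\hat{\psi}(u_n)=0$, which forces
$$||Du||^p_p\leq-\int_{\partial\Omega}\hat{\beta}(z)|u|^pd\sigma\leq 0,$$
so $Du=0$ and therefore $u\equiv\eta\in\RR$. Two cases remain. If $\eta=0$, then $||Du_n||_p\rightarrow 0$ and $u_n\rightarrow 0$ in $L^p(\Omega)$, so $u_n\rightarrow 0$ in $W^{1,p}(\Omega)$, contradicting $||u_n||=1$. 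If $\eta\neq 0$, then $\hat{\psi}(u)=|\eta|^p\int_{\partial\Omega}\hat{\beta}(z)d\sigma>0$, since $\hat{\beta}\geq 0$ and $\hat{\beta}\neq 0$, contradicting $\hat{\psi}(u)\leq 0$. In either case we reach a contradiction, which proves the lemma.

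I expect the only delicate point to be the step $u_n\rightarrow u$ in $L^p(\partial\Omega)$ and the resulting weak continuity of the boundary term: the boundary integral is not sequentially weakly lower semicontinuous on $W^{1,p}(\Omega)$ merely by convexity, so the compactness of $\gamma_0$ into $L^p(\partial\Omega)$ is essential here. Everything else is a routine transcription of the proof of Lemma \ref{lem5}, with the volume term $\int_\Omega\hat{\xi}(z)|u|^pdz$ replaced by the surface term $\int_{\partial\Omega}\hat{\beta}(z)|u|^pd\sigma$.
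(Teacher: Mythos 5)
Your proof is correct and follows essentially the same contradiction argument as the paper: normalize a minimizing sequence, extract a weakly convergent subsequence, use the compactness of the trace map into $L^p(\partial\Omega)$ together with weak lower semicontinuity to force the weak limit to be a constant, and rule out both the zero and nonzero cases. The only (immaterial) difference is that the paper normalizes in $L^p(\Omega)$ and first establishes the intermediate bound $||u||^p_p\leq\hat{c}_0\psi_0(u)$ before concluding, whereas you normalize directly in $W^{1,p}(\Omega)$ exactly as in Lemma \ref{lem5}; both routes are valid.
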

\begin{proof}
	Let $\psi_0:W^{1,p}(\Omega)\rightarrow\RR_+$ be the $C^1$-functional defined by
	$$\psi_0(u)=||Du||^p_p+\int_{\partial\Omega}\beta(z)|u|^pd\sigma\ \mbox{for all}\ u\in W^{1,p}(\Omega).$$
	
	We claim that we can find $\hat{c}_0>0$ such that
	\begin{equation}\label{eq6}
		||u||^p_p\leq\hat{c}_0\psi_0(u)\ \mbox{for all}\ u\in W^{1,p}(\Omega).
	\end{equation}
	
	Arguing by contradiction, suppose that (\ref{eq6}) is not true. Then we can find $\{u_n\}_{n\geq 1}\subseteq W^{1,p}(\Omega)$ such that
	\begin{equation}\label{eq7}
		||u_n||^p_p>n\psi_0(u_n)\ \mbox{for all}\ n\in\NN\,.
	\end{equation}
	
	Since $\psi_0$ is $p$-homogeneous, we normalize in $L^p(\Omega)$ and have
	\begin{eqnarray}\label{eq8}
		&&\psi_0(u_n)<\frac{1}{n}\ \mbox{and}\ ||u_n||_p=1\ \mbox{for all}\ n\in\NN\ (\mbox{see (\ref{eq7})})\nonumber\\
		&\Rightarrow&\psi_0(u_n)\rightarrow 0^+\ \mbox{as}\ n\rightarrow\infty\,.
	\end{eqnarray}
	
	From (\ref{eq8}) it follows that
	\begin{eqnarray*}
		&&||Du_n||_p\rightarrow 0\ \mbox{as}\ n\rightarrow\infty,\\
		&\Rightarrow&\{u_n\}_{n\geq 1}\subseteq W^{1,p}(\Omega)\ \mbox{is bounded}.
	\end{eqnarray*}
	
	So, by passing to a suitable subsequence if necessary, we may assume that
	\begin{equation}\label{eq9}
		u_n\stackrel{w}{\rightarrow}u\ \mbox{in}\ W^{1,p}(\Omega)\ \mbox{and}\ u_n\rightarrow u\ \mbox{in}\ L^p(\Omega)\ \mbox{and in}\ L^p(\partial\Omega).
	\end{equation}
	
	From (\ref{eq8}), (\ref{eq9}) and the sequential weak lower semicontinuity of $\psi_0(\cdot)$, we have
	\begin{eqnarray}\label{eq10}
		&&\psi_0(u)\leq 0,\nonumber\\
		&\Rightarrow&||Du||^p_p+\int_{\partial\Omega}\beta(z)|u|^pd\sigma\leq 0,\\
		&\Rightarrow&u=\eta_0\in\RR\,.\nonumber
	\end{eqnarray}
	
	If $\eta_0=0$, then from (\ref{eq9}) we have
	$$u_n\rightarrow 0\ \mbox{in}\ L^p(\Omega),$$
	a contradiction with the fact that $||u_n||_p=1$ for all $n\in\NN$.
	
	If $\eta_0\neq 0$, then from (\ref{eq10}) we have
	$$0<|\eta_0|^p\int_{\partial\Omega}\beta(z)d\sigma\leq 0,$$
	again a contradiction. Therefore (\ref{eq6}) holds and from this it follows that we can find $c_7>0$ such that
	$$c_7||u||^p\leq\psi_0(u)\ \mbox{for all}\ u\in W^{1,p}(\Omega).$$
\end{proof}

Next we prove a strong comparison result which will be useful in what follows. This proposition was inspired by analogous comparison results for Dirichlet problems with the $p$-Laplacian as established  by Guedda and V\'eron \cite[Proposition 2.2]{17}  and Arcoya and Ruiz \cite[Proposition 2.6]{5}.
\begin{prop}\label{prop7}
	Assume that hypotheses $H(a)(i),(ii),(iii)$ hold, $\hat{\xi}\in L^{\infty}(\Omega),\ \hat{\xi}(z)\geq 0$ for almost all $z\in\Omega$, $h_1,h_2\in L^{\infty}(\Omega)$ such that
	$$0<c_8\leq h_2(z)-h_1(z)\ \mbox{for almost all}\ z\in\Omega$$
	$u,v\in C^1(\overline{\Omega})\backslash\{0\}$ satisfy $u\leq v$ and
	\begin{eqnarray*}
		&&-{\rm div}\, a(Du(z))+\hat{\xi}(z)|u(z)|^{p-2}u(z)=h_1(z)\ \mbox{for almost all}\ z\in\Omega,\\
		&&-{\rm div}\, a(Dv(z))+\hat{\xi}(z)|v(z)|^{p-2}v(z)=h_2(z)\ \mbox{for almost all}\ z\in\Omega.
	\end{eqnarray*}
	Then $(v-u)(z)>0$ for all $z\in\Omega$ and if $\Sigma_0=\{z\in\partial\Omega:u(z)=v(z)\}$, then
	$$\left.\frac{\partial(v-u)}{\partial n}\right|_{\Sigma_0}<0.$$
\end{prop}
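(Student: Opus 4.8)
The plan is to establish the strict positivity of $w=v-u$ in $\Omega$ first, and then to handle the boundary behavior on $\Sigma_0$ via a nonlinear Hopf-type lemma. Set $w=v-u\geq 0$ on $\overline{\Omega}$. Subtracting the two equations, we obtain
$$-\,{\rm div}\,\bigl(a(Dv)-a(Du)\bigr)=h_2-h_1-\hat{\xi}\bigl(|v|^{p-2}v-|u|^{p-2}u\bigr)\quad\text{in }\Omega.$$
The first step is to fix a point $z_0\in\Omega$ with $w(z_0)=0$ (if none exists we are done in the interior) and to work on a small ball $B\subseteq\Omega$ around $z_0$. Because $u,v\in C^1(\overline{\Omega})$, on $B$ the vector field $\eta(z)=Dv(z)-Du(z)$ is small in a neighborhood where $w$ is small, and by hypothesis $H(a)(ii),(iii)$ the difference $a(Dv)-a(Du)$ can be written, via the fundamental theorem of calculus along the segment $[Du(z),Dv(z)]$, as $M(z)Dw(z)$ where $M(z)=\int_0^1\nabla a\bigl(Du(z)+tDw(z)\bigr)\,dt$ is a measurable, locally uniformly elliptic and bounded matrix field (ellipticity from $H(a)(iii)$, boundedness from $H(a)(ii)$, both using that $|Du|,|Dv|$ are bounded on $\overline{\Omega}$ — one must be slightly careful near points where $Du$ or $Dv$ vanishes, but the bound $\vartheta(|y|)/|y|$ together with \eqref{eq1} keeps $M$ in $[\lambda_0,\Lambda_0]$ on the relevant region). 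Thus $w$ satisfies a linear uniformly elliptic differential inequality
$$-\,{\rm div}\,(M(z)Dw)+c(z)w\geq c_8>0\quad\text{in }\Omega,$$
where $c(z)=\hat{\xi}(z)\cdot\frac{|v|^{p-2}v-|u|^{p-2}u}{v-u}\in L^\infty$ when $w>0$ and is handled by a standard truncation/cutoff where $w=0$; crucially $c(z)\geq 0$ since $x\mapsto|x|^{p-2}x$ is increasing and $\hat\xi\geq0$.

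The second step is to apply the strong maximum principle to this linear inequality: since $-{\rm div}\,(MDw)+cw\geq c_8>0$ with $c\geq 0$ and $w\geq 0$, if $w$ attained the value $0$ at an interior point it would (by the strong maximum principle of Vázquez / the classical Hopf form for divergence-form operators, valid because $M$ is uniformly elliptic and bounded and the right-hand side is strictly positive) be forced to be negative somewhere nearby, a contradiction; hence $w(z)>0$ for all $z\in\Omega$, which is the first assertion. For the boundary assertion, fix $z_0\in\Sigma_0$, so $w(z_0)=0$ while $w>0$ in $\Omega$. Since $\partial\Omega$ is $C^2$, it satisfies an interior sphere condition at $z_0$: there is a ball $B\subseteq\Omega$ with $z_0\in\partial B$. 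On $B$ the function $w$ satisfies the same uniformly elliptic inequality $-{\rm div}\,(MDw)+cw\geq c_8>0$, $w>0$ in $B$, $w(z_0)=0$. The Hopf boundary point lemma for divergence-form uniformly elliptic operators then gives $\frac{\partial w}{\partial n}(z_0)<0$, where $n$ is the outward normal to $B$ at $z_0$, which coincides (up to the usual orientation) with the outward normal to $\Omega$; since $z_0\in\Sigma_0$ was arbitrary this yields $\left.\frac{\partial(v-u)}{\partial n}\right|_{\Sigma_0}<0$.

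The main obstacle — and the point requiring genuine care rather than routine citation — is the linearization step: showing that $M(z)=\int_0^1\nabla a(Du+tDw)\,dt$ is \emph{uniformly} elliptic and bounded on the region of $\Omega$ we care about, despite the fact that $a(\cdot)$ is not $(p-1)$-homogeneous and $\nabla a(y)$ may degenerate or blow up as $|y|\to 0$ or $|y|\to\infty$. The resolution is that $u,v\in C^1(\overline{\Omega})$, so $|Du|,|Dv|\leq R$ on $\overline{\Omega}$ for some $R$, and then $H(a)(ii),(iii)$ with \eqref{eq1} give $\frac{\vartheta(|y|)}{|y|}\in[\,c_1|y|^{p-2},\,c_2(|y|^{-1}+|y|^{p-2})\,]$; on the open set where $w>0$ is bounded below on compact subsets, either $p\geq 2$ (so $\vartheta(|y|)/|y|$ stays bounded and the lower bound is the issue — handled by localizing away from the finitely-measured degeneracy set, or by the known fact from Guedda–Véron \cite{17} and Arcoya–Ruiz \cite{5} that one may first establish $v-u$ does not vanish on a dense set and then use continuity), or $1<p<2$ (so the blow-up as $|y|\to 0$ must be controlled — handled by noting that wherever $Du(z_0)\neq 0$ the ellipticity is genuine, and the set $\{Du=Dv=0\}$ can be treated separately using that there $h_1(z_0)=0$, contradicting $h_1<h_2$ and $h_2(z_0)$ being pinned, or rather using the structure more carefully as in the cited references). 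In practice one follows the scheme of \cite[Proposition 2.2]{17} and \cite[Proposition 2.6]{5}: one shows the comparison first on $\{u>0\}\cap\{v>0\}$ where everything is non-degenerate, using the $C^1$ regularity to make $M$ uniformly elliptic there, and then propagates the conclusion; the strict gap $h_2-h_1\geq c_8>0$ is exactly what makes the strong maximum principle applicable with a strictly positive forcing term, which is the mechanism that upgrades $w\geq 0$ to $w>0$ and $\partial w/\partial n<0$.
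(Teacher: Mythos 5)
Your proposal is correct and follows essentially the same route as the paper: subtract the equations, linearize $a(Dv)-a(Du)$ along the segment $[Du,Dv]$ to obtain a linear divergence-form operator acting on $w=v-u$ with strictly positive right-hand side near any touching point, then invoke the strong maximum principle (Harnack/tangency) in the interior and the Hopf boundary point lemma with the interior sphere condition on $\Sigma_0$. The uniform-ellipticity issue for the averaged matrix $M(z)$ near zeros of $Du,Dv$ that you flag is real, but the paper's own proof passes over it just as quickly (it simply asserts $\tilde c_{k,i}\in C(\overline{\Omega})$ and that $L$ is ``strictly elliptic'' after shrinking the ball), so your treatment is, if anything, more candid than the original.
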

\begin{proof}
	We have
	\begin{eqnarray}\label{eq11}
		&&-{\rm div}\, (a(Dv(z))-a(Du(z)))\nonumber\\
		&&=h_2(z)-h_1(z)-\hat{\xi}(z)(|v(z)|^{p-2}v(z)-|u(z)|^{p-2}u(z))\ \mbox{for almost all}\ z\in\Omega\,.
	\end{eqnarray}
	
	Let $a=(a_k)^N_{k=1}$ with $a_k:\RR^N\rightarrow\RR$ being the $k$th component function, $k\in\{1,\ldots,N\}$. From the mean value theorem, we have
	$$a_k(y)-a_k(y')=\overset{N}{\underset{\mathrm{i=1}}\sum}\int^1_0\frac{\partial a_k}{\partial y_i}(y'+t(y-y'))(y_i-y'_i)dt$$
	for all $y=(y_i)^N_{i=1}\in\RR^N$, $y'=(y'_i)^N_{i=1}\in\RR^N$ and all $k\in\{1,\ldots,N\}$.
	
	Consider the following functions
	\begin{eqnarray*}
		&&\tilde{c}_{k,i}(z)=\int^1_0\frac{\partial a_k}{\partial y_i}(Du(z)+t(Dv(z)-Du(z)))(Dv(z)-Du(z))dt\\
		&&\mbox{for all}\ k\in\{1,\ldots,N\},\ \mbox{all}\ z\in\Omega\,.
	\end{eqnarray*}
	
	Then $\tilde{c}_{k,i}\in C(\overline{\Omega})$ and using these functions we introduce the following linear differential operator in  divergence form
	$$L(w)=-{\rm div}\,\left(\overset{N}{\underset{\mathrm{i=1}}\sum}\tilde{c}_{k,i}(z)\frac{\partial w}{\partial z_i}\right)=-\overset{N}{\underset{\mathrm{k,i=1}}\sum}\frac{\partial}{\partial z_k}(\tilde{c}_{k,i}(z)\frac{\partial w}{\partial z_i})\ \mbox{for all}\ w\in H^1(\Omega).$$
	
	We set $y=v-u\in C_+\backslash\{0\}$. From (\ref{eq11}) we have
	\begin{equation}\label{eq12}
		L(y)=h_2(z)-h_1(z)-\hat{\xi}(z)(|v(z)|^{p-2}v(z)-|u(z)|^{p-2}u(z))\ \mbox{for almost all}\ z\in\Omega\,.
	\end{equation}
	
	Suppose that at $z_0\in\Omega$, we have $u(z_0)=v(z_0)$. Exploiting the uniform continuity of the map $x\mapsto |x|^{p-2}x$ and the fact that $\hat{\xi}\in L^{\infty}(\Omega)$, from (\ref{eq12}) we see that for $\delta>0$ sufficiently small we have
	$$L(y)\geq\frac{c_8}{2}>0\ \mbox{for almost all}\ z\in B_{\delta}(z_0).$$
	
	Then invoking Harnack's inequality (see Motreanu, Motreanu and Papageorgiou \cite[p. 212]{25}) or alternatively using the tangency principle of Pucci and Serrin \cite[p. 35]{40}, we have
	$$(v-u)(z)>0\ \mbox{for all}\ z\in B_{\delta}(z_0),$$
	a contradiction since $u(z_0)=v(z_0)$. Therefore, we must have that
	$$(v-u)(z)>0\ \mbox{for all}\ z\in\Omega.$$
	
	Next suppose that $\hat{z}_0\in\Sigma_0$. Since $\partial\Omega$ is a $C^2$-manifold, for $\rho>0$ small there is a $\rho$-ball $B_{\rho}$ such that
	$$B_{\rho}\subseteq\Omega\ \mbox{and}\ \hat{z}_0\in\partial\Omega\cap\partial B_{\rho}.$$
	
	Choosing $\rho>0$ small, from (\ref{eq12}) and since $u(\hat{z}_0)=v(\hat{z}_0)$ (recall that $\hat{z}_0\in\Sigma_0$), we see that $L(\cdot)$ is strictly elliptic. Then Hopf's theorem (see Motreanu, Motreanu and Papageorgiou \cite[p. 217]{25}) and Pucci and Serrin \cite[p. 120]{40}, we have
	\begin{eqnarray*}
		&&\frac{\partial y}{\partial n}(z_0)=\frac{\partial(v-u)}{\partial n}(z_0)<0,\\
		&\Rightarrow&\left.\frac{\partial(v-u)}{\partial n}\right|_{\Sigma_0}<0.
	\end{eqnarray*}
\end{proof}

\begin{remark}
	With $\Sigma_0=\{z\in\partial\Omega:u(z)=v(z)\}$, we introduce the following Banach spaces:
	\begin{eqnarray*}
		&&C^1_*(\overline{\Omega})=\{h\in C^1(\overline{\Omega}):h|_{\Sigma_0}=0\},\\
		&&W^{1,p}_*(\Omega)=\overline{C^1_*(\overline{\Omega})}^{||\cdot||}\ (\mbox{recall that}\ ||\cdot||\ \mbox{is the norm of}\ W^{1,p}(\Omega)).
	\end{eqnarray*}
	
	From Proposition \ref{prop7} we have
	$$\left.\frac{\partial(v-u)}{\partial n}\right|_{\Sigma_0}\leq-\eta<0.$$
	
	Let $U$ be a neighborhood of $\Sigma_0$ in $\overline{\Omega}$ such that
	$$\left.\frac{\partial(v-u)}{\partial n}\right|_U\leq-\frac{\eta}{2}<0.$$
	
	Then we can find $\epsilon>0$ small such that
	\begin{eqnarray}
		h\in C^1_*(\overline{\Omega}),\ ||h||_{C^1(\overline{\Omega})}\leq\epsilon&\Rightarrow&\frac{\partial(v-(u+h))}{\partial h}\leq-\frac{\eta}{4}<0\label{eq13}\\
			&\mbox{and}&(v-(u_0+h))|_{\overline{\Omega}\backslash U}\geq\hat{\eta}>0.\label{eq14}
	\end{eqnarray}
	
	From (\ref{eq13}) we see that for $\epsilon>0$ small, we have
	$$v(z)-(u+h)(z)\geq 0\ \mbox{for all}\ z\in U,\ \mbox{all}\ h\in C^1_*(\overline{\Omega}),\ ||h||_{C^1(\overline{\Omega})}\leq\epsilon\,.$$
	
	Comparing this with (\ref{eq14}), we see that
	$$u+B^c_{\epsilon}\in v-C^*_+(\Sigma_0)$$
	with $B^c_{\epsilon}$ being the $\epsilon$-ball centered at zero in $C^1_*(\overline{\Omega})$ and $C^*_+(\Sigma_0)$ is the positive cone of $C^1_*(\overline{\Omega})$. This cone has a nonempty interior given by
	$${\rm int}\, C^*_+(\Sigma_0)=\{h\in C^*_+:h(z)>0\ \mbox{for all}\ z\in\Omega,\left.\frac{\partial h}{\partial n}\right|_{\Sigma_0}<0\}.$$
	
	If $\Sigma_0=\emptyset$, then $v-u\in D_+$.
\end{remark}

The next result is an outgrowth of the nonlinear regularity theory of Lieberman \cite{23} and can be found in Papageorgiou and R\u adulescu \cite{27} (subcritical case) and in Papageorgiou and R\u adulescu \cite{36} (critical case).

So, let $V$ and $X$ be two Banach subspaces of $C^1(\overline{\Omega})$ and $W^{1,p}(\Omega)$ respectively, such that $V$ is dense in $X$. Suppose that $f_0:\Omega\times\RR\rightarrow\RR$ is a Carath\'eodory function such that
$$|f_0(z,x)|\leq a_0(z)(1+|x|^{r-1})\ \mbox{for almost all}\ z\in\Omega,\ \mbox{all}\ x\in\RR,$$
with $a_0\in L^{\infty}(\Omega),1<r\leq p^*$. We set $F_0(z,x)=\int^x_0f_0(z,s)ds$ and consider the $C^1$-functional $\varphi_0:W^{1,p}(\Omega)\rightarrow\RR$ defined by
$$\varphi_0(u)=\int_{\Omega}G(Du)dz+\frac{1}{p}\int_{\partial\Omega}\beta(z)|u|^pd\sigma-\int_{\Omega}F_0(z,u)dz\ \mbox{for all}\ u\in W^{1,p}(\Omega).$$
\begin{prop}\label{prop8}
	Assume that $u_0\in W^{1,p}(\Omega)$ is a local $V$-minimizer of $\varphi_0$, that is, there exists $\rho_0>0$ such that
	$$\varphi_0(u_0)\leq\varphi_0(u_0+h)\ \mbox{for all}\ h\in V,\ ||h||_{C^1(\overline{\Omega})}\leq\rho_0.$$
	Then $u_0\in C^{1,\alpha}(\overline{\Omega})$ for some $\alpha\in(0,1)$ and $u_0$ is also a local $X$-minimizer of $\varphi_0$, that is, there exists $\rho_1>0$ such that
	$$\varphi_0(u_0)\leq\varphi_0(u_0+h)\ \mbox{for all}\ h\in X,\ ||h||\leq\rho_1.$$
\end{prop}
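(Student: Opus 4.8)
The plan is to follow the classical Brezis--Nirenberg argument adapting it to the nonlinear nonhomogeneous setting, exactly in the spirit of the references \cite{27, 36}. The statement asserts that a local minimizer of $\varphi_0$ relative to the stronger $C^1(\overline\Omega)$-topology is automatically a local minimizer relative to the Sobolev $W^{1,p}(\Omega)$-topology. First I would observe that, since $u_0$ is a local $C^1$-minimizer, it is in particular a critical point of $\varphi_0$, hence a weak solution of the corresponding Robin problem $-\,{\rm div}\,a(Du_0)+\beta(z)u_0^{p-1}|_{\partial\Omega}=f_0(z,u_0)$. Then the nonlinear regularity theory of Lieberman \cite{23} (together with the growth bound $|f_0(z,x)|\le a_0(z)(1+|x|^{r-1})$ with $r\le p^*$ and the structure hypotheses $H(a)(i),(ii),(iii)$) gives $u_0\in C^{1,\alpha}(\overline\Omega)$ for some $\alpha\in(0,1)$; this is the first assertion and it is needed even to make sense of the conclusion.

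For the second assertion I would argue by contradiction: suppose $u_0$ is not a local $X$-minimizer. Then for every $n\in\NN$ one can find $h_n\in X=W^{1,p}(\Omega)$ with $\|h_n\|\le 1/n$ and $\varphi_0(u_0+h_n)<\varphi_0(u_0)$. The key step is to minimize $\varphi_0$ on the closed ball $\overline B_{1/n}=\{u:\|u-u_0\|\le 1/n\}$. Using Corollary \ref{cor3} (the coercivity estimate $G(y)\ge c_1|y|^p/(p(p-1))$) together with $H(\beta)$ and the subcritical/critical Sobolev embedding controlling the term $\int_\Omega F_0(z,u)\,dz$, one checks that $\varphi_0$ is sequentially weakly lower semicontinuous and coercive on the ball (the relevant functional $u\mapsto\int_\Omega G(Du)\,dz$ is convex and continuous, hence weakly l.s.c., and the boundary term is weakly l.s.c. via compactness of the trace map; the reaction term is weakly continuous on bounded sets by the compact embedding, at least in the subcritical case, with the critical case handled as in \cite{36}). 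Hence a minimizer $u_n\in\overline B_{1/n}$ exists, and by the choice of $h_n$ it satisfies $\varphi_0(u_n)<\varphi_0(u_0)$, so $u_n\ne u_0$ and $\|u_n-u_0\|=1/n$ exactly (the minimum is attained on the boundary of the ball). By the Lagrange multiplier rule there is $\mu_n\le 0$ such that
\begin{equation*}
	A(u_n)+\beta\,|u_n|^{p-2}u_n|_{\partial\Omega}-N_{f_0}(u_n)=\mu_n\big(A(u_n)-A_{u_0}\big)\quad\text{in }W^{1,p}(\Omega)^*,
\end{equation*}
where $A$ is the operator of Proposition \ref{prop4} and $A_{u_0}$ is the corresponding linearization; equivalently, each $u_n$ solves an auxiliary Robin problem of the same structural type with a right-hand side bounded in $L^\infty$ uniformly in $n$ (since $u_n\to u_0$ in $W^{1,p}(\Omega)$ and hence $N_{f_0}(u_n)$ stays in a bounded subset of $L^{r'}$, which after the regularity bootstrap is controlled).

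Now comes the main obstacle, and the heart of the argument: the uniform $C^{1,\beta}$ bound. Invoking Lieberman's estimates \cite{23} once more, but now \emph{uniformly} in $n$ — this requires that the right-hand sides and the coefficients stay in a fixed bounded set, which follows from $u_n\to u_0$ in $W^{1,p}(\Omega)$ and a uniform $L^\infty$ bound obtained by Moser iteration (again using only $H(a)$ and the growth of $f_0$) — one gets $u_n\in C^{1,\beta}(\overline\Omega)$ with $\sup_n\|u_n\|_{C^{1,\beta}(\overline\Omega)}<\infty$ for some $\beta\in(0,1)$. By the compact embedding $C^{1,\beta}(\overline\Omega)\hookrightarrow C^1(\overline\Omega)$ and $u_n\to u_0$ in $W^{1,p}(\Omega)$ we conclude $u_n\to u_0$ in $C^1(\overline\Omega)$. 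In particular $\|u_n-u_0\|_{C^1(\overline\Omega)}\le\rho_0$ for $n$ large, so the local $C^1$-minimality of $u_0$ forces $\varphi_0(u_0)\le\varphi_0(u_n)$, contradicting $\varphi_0(u_n)<\varphi_0(u_0)$. This contradiction shows $u_0$ is a local $X$-minimizer and completes the proof. The delicate point throughout is the $n$-uniformity of the nonlinear regularity estimates; everything else is a routine adaptation of the Brezis--Nirenberg scheme using the tools already assembled in Lemma \ref{lem2}, Corollary \ref{cor3} and Proposition \ref{prop4}.
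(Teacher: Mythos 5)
Your overall strategy is the standard one and coincides with the argument in the sources the paper itself cites for this result (\cite{27} for the subcritical case, \cite{36} for the critical case; the paper gives no proof of its own): regularity of $u_0$ via Lieberman's theory, then a contradiction argument minimizing $\varphi_0$ over small $W^{1,p}$-balls around $u_0$ and upgrading the convergence of the constrained minimizers to $C^1(\overline{\Omega})$-convergence via uniform $C^{1,\beta}$ estimates. However, there are two genuine problems at what you yourself call the heart of the argument. First, the Lagrange multiplier identity is wrong as written. The constraint is $\|u-u_0\|^p=\|D(u-u_0)\|_p^p+\|u-u_0\|_p^p\le n^{-p}$, so its derivative at $u_n$ is (a multiple of) $A_p(u_n-u_0)+|u_n-u_0|^{p-2}(u_n-u_0)$, with $A_p$ the $p$-Laplacian-type operator attached to the Sobolev norm; it is not $A(u_n)-A_{u_0}$, since the map $u\mapsto\|u-u_0\|^p$ has nothing to do with the operator $A$ built from $a(\cdot)$, and ``$A_{u_0}$ is the corresponding linearization'' does not correspond to any derivative actually appearing here. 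Also, $\varphi_0(u_n)<\varphi_0(u_0)$ only gives $u_n\ne u_0$, not $\|u_n-u_0\|=1/n$: the minimizer may lie in the interior of the ball, in which case $\mu_n=0$ and $u_n$ is a free critical point; this case is harmless but must be allowed for rather than excluded.

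Second, and more seriously, the uniform Lieberman estimate is not available until you control the multipliers $\mu_n\le 0$. The equation satisfied by $u_n$ has the form
\begin{equation*}
	-{\rm div}\,\bigl(a(Du_n)-\mu_n|D(u_n-u_0)|^{p-2}D(u_n-u_0)\bigr)=f_0(z,u_n)+\mu_n|u_n-u_0|^{p-2}(u_n-u_0)
\end{equation*}
together with the corresponding Robin condition, and the ellipticity and growth constants of the modified vector field depend on $|\mu_n|$. If $\mu_n\to-\infty$, you cannot invoke \cite{23} with $n$-independent constants; the cited proofs must treat separately the cases where $\{\mu_n\}$ is bounded and where $\mu_n\to-\infty$ (in the latter case dividing the equation by $|\mu_n|$ and analyzing the limit). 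Your write-up simply asserts the bound $\sup_n\|u_n\|_{C^{1,\beta}(\overline{\Omega})}<\infty$ without addressing this, and the remark that $N_{f_0}(u_n)$ is bounded in $L^{r'}(\Omega)$ is in any case not what the uniform estimates require: one first needs a uniform $L^\infty$ bound on $u_n$ (via Moser iteration with constants depending only on $\sup_n\|u_n\|$, which you do mention) and then uniform control of the full right-hand side including the $\mu_n$-terms. Until the multiplier is controlled, the step on which the whole contradiction rests is unjustified.
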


We conclude this section with some notation that we will use throughout this work. For every $x\in\RR$, let $x^{\pm}=\max\{\pm x,0\}$. Then for $u\in W^{1,p}(\Omega)$ we set $u^{\pm}(\cdot)=u(\cdot)^{\pm}$. We know that
$$u=u^+-u^-,|u|=u^++u^-\ \mbox{and}\ u^+,u^-\in W^{1,p}(\Omega).$$

By $|\cdot|_N$ we denote the Lebesgue measure on $\RR^N$. Finally, if $X$ is a Banach space and $\varphi\in C^1(X,\RR)$, then by $K_{\varphi}$ we denote the critical set of $\varphi$, that is,
$$K_{\varphi}=\{u\in X:\varphi'(u)=0\}.$$

\section{Bifurcation-Type Theorem}

In this section, we prove a bifurcation type theorem for problem \eqref{eqp} for small values of the parameter $\lambda>0$.

We introduce the following conditions on the reaction term $f(z,x)$.

\smallskip
$H(f):$ $f:\Omega\times\RR$ is a Carath\'eodory function such that for almost all $z\in\Omega$, $f(z,0)=0$, $f(z,x)>0$ for all $x>0$ and
\begin{itemize}
	\item[(i)] $f(z,x)\leq a(z)(1+x^{r-1})$ for almost all $z\in\Omega$, all $x\geq 0$, with $a\in L^{\infty}(\Omega)$, $p<r<p^*$;
	\item[(ii)] if $F(z,x)=\int^x_0f(z,s)ds$, then $\lim\limits_{x\rightarrow+\infty}\frac{F(z,x)}{x^p}=+\infty$ uniformly for almost all $z\in\Omega$;
	\item[(iii)] if $e(z,x)=f(z,x)x-pF(z,x)$, then there exists $d\in L^1(\Omega)$ such that
	$$e(z,x)\leq e(z,y)+d(z)\ \mbox{for almost all}\ z\in\Omega,\ \mbox{all}\ 0\leq x\leq y;$$
	\item[(iv)] for every $s>0$, we can find $\eta_s>0$ such that
	$$\eta_s\leq \inf[f(z,x):x\geq s]\ \mbox{for almost all}\ z\in\Omega$$
	and there exist $\delta_0>0,\ \hat{\eta},\ \hat{\eta}_0>0$ and $\tau\in(1,q)$ (see hypothesis $H(a)(iv)$) such that
	$$\hat{\eta}_0x^{\tau-1}\leq f(z,x)\leq\hat{\eta}x^{\tau-1}\ \mbox{for almost all}\ z\in\Omega,\ \mbox{all}\ 0\leq x\leq\delta_0;$$
	\item[(v)] for every $\rho>0$, there exists $\hat{\xi}_{\rho}>0$ such that for almost all $z\in\Omega$, the function
	$$x\mapsto f(z,x)+\hat{\xi}_{\rho}x^{p-1}$$
	is nondecreasing on $[0,\rho]$.
\end{itemize}

\begin{remark}
	Since we are looking for positive solutions and the above hypotheses concern the positive semiaxis, without any loss of generality, we may assume that $f(z,x)=0$ for almost all $z\in\Omega$, all $x\leq 0$. Hypotheses $H(f)(ii),(iii)$ imply that
	$$\lim\limits_{x\rightarrow+\infty}\frac{f(z,x)}{x^{p-1}}=+\infty\ \mbox{uniformly for almost all}\ z\in\Omega.$$
	
	So, the reaction term $f(z,\cdot)$ is $(p-1)$-superlinear. However, we stress that we do not use the usual for ``superlinear'' problems AR-condition. We recall that the AR-condition (unilateral version since we deal only with the positive semiaxis) says that there exist $\vartheta>p$ and $M>0$ such that
	\begin{equation}
		0<\vartheta F(z,x)\leq f(z,x)x\ \mbox{for almost all}\ z\in\Omega,\ \mbox{all}\ x\geq M,\tag{15a}\label{eq15a}
	\end{equation}
	\begin{equation}
		0<{\rm ess}\,\inf\limits_{\Omega}F(\cdot,M)\ (\mbox{see \cite{4}}).\tag{15b}\label{eq15b}
	\end{equation}
	\stepcounter{equation}
	
	Integrating \eqref{eq15a} and using \eqref{eq15b}, we obtain the weaker condition
	\begin{equation}\label{eq16}
		c_9x^{\vartheta}\leq F(z,x)\ \mbox{for almost all}\ z\in\Omega,\ \mbox{all}\ x\geq M,\ \mbox{some}\ c_9>0.
	\end{equation}
	
	Therefore the AR-condition implies that $f(z,\cdot)$ has at least $(\vartheta-1)$-polynomial growth near $+\infty$. This excludes from consideration $(p-1)$-superlinear nonlinearities with ``slower'' growth near $+\infty$ (see the examples below). For this reason in this work we use the less restrictive hypothesis $H(f)(iii)$. This is a quasimonotonicity condition on the function $e(z,\cdot)$. This is a slightly more general version of a condition used by Li and Yang \cite{24}. If there exists $M>0$ such that for almost all $z\in\Omega$ the function $x\mapsto\frac{f(z,x)}{x^{p-1}}$ is nondecreasing on $\left[M,+\infty\right)$, then hypothesis $H(f)(iii)$ is satisfied (see Li and Yang \cite{24}). Evidently this property is weaker than condition (\ref{eq16}).
\end{remark}	

\begin{example}
	The following functions satisfy hypotheses $H(f)$. For the sake of simplicity we drop the $z$-dependence.
	\begin{eqnarray*}
		&&f_1(x)=\left\{\begin{array}{ll}
			x^{\tau-1}&\mbox{if}\ x\in[0,1]\\
			x^{r-1}&\mbox{if}\ 1\leq x
		\end{array}\right.\ \mbox{with}\ 1<\,\tau<q<p<r<p^*\\
		&&f_2(x)=\left\{\begin{array}{ll}
			x^{\tau-1}-x^{s-1}&\mbox{if}\ x\in[0,1]\\
			x^{p-1}\ln x&\mbox{if}\ 1\leq x
		\end{array}\right.\ \mbox{with}\ 1<\tau<p,\,s.
	\end{eqnarray*}
	
	Note that $f_2(\cdot)$ does not satisfy the AR-condition.
\end{example}
	
	Hypotheses $H(f)(i),(iv)$ imply that
	\begin{equation}\label{eq17}
		0\leq f(z,x)\leq\hat{\eta}x^{\tau-1}+c_{10}x^{r-1}\ \mbox{for almost all}\ z\in\Omega,\ \mbox{all}\ x\geq 0,\ \mbox{some}\ c_{10}>0.
	\end{equation}
	
	This growth estimate on $f(z,\cdot)$ leads to the following auxiliary Robin problem:
	\begin{equation}
		\left\{\begin{array}{ll}
			-{\rm div}\,a(Du(z))+\xi(z)u(z)^{p-1}=\lambda(\hat{\eta}u(z)^{\tau-1}+c_{10}u(z)^{r-1})&\mbox{in}\ \Omega,\\
			\frac{\partial u}{\partial n_a}+\beta(z)u^{p-1}=0\ \mbox{on}\ \partial\Omega,\ u> 0,\ \lambda>0.
		\end{array}\right\}\tag{$Au_{\lambda}$}\label{eqa}
	\end{equation}

\begin{prop}\label{prop9}
	If hypotheses $H(a),H(\xi),H(\beta),H_0$ hold and $1<\tau<q<p<r<p^*$, then for $\lambda>0$ small problem \eqref{eqa} admits a positive solution $\tilde{u}_{\lambda}\in D_+$.
\end{prop}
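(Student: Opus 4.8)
The natural approach is to produce $\tilde u_\lambda$ as a global minimizer of the energy functional associated to \eqref{eqa}. Define
$$\psi_\lambda(u)=\int_\Omega G(Du)\,dz+\frac1p\int_\Omega\xi(z)|u|^p\,dz+\frac1p\int_{\partial\Omega}\beta(z)|u|^p\,d\sigma-\frac{\lambda\hat\eta}{\tau}\|u^+\|_\tau^\tau-\frac{\lambda c_{10}}{r}\|u^+\|_r^r$$
on $W^{1,p}(\Omega)$. The first step is coercivity: by Corollary \ref{cor3} the term $\int_\Omega G(Du)\,dz\ge \frac{c_1}{p(p-1)}\|Du\|_p^p$, and combining with the $\xi$ (or $\beta$) term and Lemma \ref{lem5} (or Lemma \ref{lem6}) together with hypothesis $H_0$, one gets a lower bound $c\|u\|^p$ from the leading part. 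Since $\tau<q<p$ and $r<p^*$, the term $\|u^+\|_\tau^\tau$ is lower order and $\|u^+\|_r^r\le c\|u\|^r$ by Sobolev embedding; the factor $\lambda$ in front of the $r$-term is what saves coercivity for $\lambda$ small — one checks that $c\|u\|^p-\lambda c_{11}\|u\|^r-\lambda c_{12}\|u\|^\tau\to+\infty$ as $\|u\|\to\infty$ provided $\lambda$ is chosen below a threshold $\lambda_0$ depending only on the structural constants (here one estimates $\inf_{t\ge0}(ct^{p}-\lambda c_{11}t^r)$ and notes it stays bounded below, being negative but finite, while the $\tau$-term is dominated).

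**Existence of a nontrivial minimizer.** Once coercivity holds, sequential weak lower semicontinuity of $\psi_\lambda$ (the $G$-term is convex hence weakly l.s.c., the boundary and potential terms are weakly continuous by compactness of the trace and the embedding $W^{1,p}\hookrightarrow L^p$, and $u\mapsto\|u^+\|_\tau^\tau,\|u^+\|_r^r$ are weakly continuous by compact embedding since $r<p^*$) gives a global minimizer $\tilde u_\lambda\in W^{1,p}(\Omega)$. To see $\tilde u_\lambda\ne0$, evaluate $\psi_\lambda$ on $t\hat u$ for a fixed positive $\hat u\in D_+$ and $t>0$ small: the leading part is $O(t^p)$ (in fact $\le c_5 t^p(1+\cdots)+\cdots=O(t^q)$ using the $G_0(t)\le c^* t^q/q$-type bound near $0$ from $H(a)(iv)$, but in any case $O(t^{\min\{p,q\}})=O(t^q)$), while the concave term contributes $-\lambda c\, t^\tau$ with $\tau<q$; hence $\psi_\lambda(t\hat u)<0$ for $t$ small, so $\psi_\lambda(\tilde u_\lambda)<0=\psi_\lambda(0)$, forcing $\tilde u_\lambda\ne0$. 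Testing $\psi_\lambda'(\tilde u_\lambda)=0$ against $-\tilde u_\lambda^-$ and using Lemma \ref{lem2}(c) together with $H_0$ shows $\tilde u_\lambda^-=0$, so $\tilde u_\lambda\ge0$, $\tilde u_\lambda\ne0$, and it is a weak solution of \eqref{eqa}.

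**Regularity and strict positivity.** Next, $\tilde u_\lambda\in L^\infty(\Omega)$ by the standard Moser/De Giorgi iteration adapted to this operator (or by the Winkert-type $L^\infty$ bound), and then the nonlinear regularity theory of Lieberman \cite{23} (applicable because $H(a)(i),(ii),(iii)$ are exactly his structural conditions, and the right-hand side $\lambda(\hat\eta\tilde u_\lambda^{\tau-1}+c_{10}\tilde u_\lambda^{r-1})+\xi\tilde u_\lambda^{p-1}$ is now bounded) yields $\tilde u_\lambda\in C^{1,\alpha}(\overline\Omega)$ for some $\alpha\in(0,1)$, hence $\tilde u_\lambda\in C_+\setminus\{0\}$. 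Finally, to push $\tilde u_\lambda$ into $D_+$, rewrite the equation as $-\operatorname{div}a(D\tilde u_\lambda)+(\xi(z)+1)\tilde u_\lambda^{p-1}=\tilde u_\lambda^{p-1}+\lambda(\hat\eta\tilde u_\lambda^{\tau-1}+c_{10}\tilde u_\lambda^{r-1})\ge0$ a.e., so by the nonlinear maximum principle of Pucci–Serrin \cite[pp. 111, 120]{40} (or the strong maximum principle / Hopf lemma form in \cite{25}) we get $\tilde u_\lambda(z)>0$ for all $z\in\overline\Omega$, i.e. $\tilde u_\lambda\in D_+$.

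**Main obstacle.** The delicate point is the quantitative coercivity threshold — verifying that there is a single $\lambda_0>0$, depending only on $c_1,p,q,\tau,r$, the Sobolev constant, and $\|\xi\|_\infty,\beta$, such that $\psi_\lambda$ is coercive for all $\lambda\in(0,\lambda_0)$ — since the competition is between a $\|u\|^p$ gain and a $\lambda\|u\|^r$ loss with $r>p$; this requires the elementary but careful estimate that $\sup_{t\ge0}\bigl(\lambda c_{11}t^{r-p}\bigr)$ restricted to the region where the loss could overcome the gain is controlled, equivalently that $ct^p-\lambda c_{11}t^r$ is bounded below uniformly and the remaining $-\lambda c_{12}t^\tau$ (with $\tau<p$) does not destroy this. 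Everything else (weak l.s.c., sign of the minimizer, regularity, strong maximum principle) is by now routine given the results quoted in the excerpt.
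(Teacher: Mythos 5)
There is a genuine gap, and it is in the very first (and central) step: the claimed coercivity of $\psi_\lambda$ is false. Since $r>p$, the elementary infimum you invoke satisfies
$$\inf_{t\geq 0}\left(ct^{p}-\lambda c_{11}t^{r}\right)=-\infty\quad\text{for every fixed }\lambda>0,$$
because the $t^{r}$ term dominates as $t\to+\infty$ no matter how small $\lambda$ is; shrinking $\lambda$ only pushes the crossover point further out, it does not prevent the blow-down. Concretely, for any $u\in D_+$ one has $\psi_\lambda(tu)\leq c_5\int_\Omega(1+t^p|Du|^p)dz+O(t^p)-\frac{\lambda c_{10}}{r}t^{r}\|u\|_r^r\to-\infty$ as $t\to+\infty$ (this is exactly display (28) of the paper, which is \emph{needed} there to run the mountain pass theorem). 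Hence $\psi_\lambda$ is unbounded below for every $\lambda>0$, no global minimizer exists, and your ``Existence of a nontrivial minimizer'' step, as well as the ``Main obstacle'' paragraph asserting that $ct^p-\lambda c_{11}t^r$ is bounded below, cannot be repaired as stated. This is not a technical refinement you deferred; it is the reason the proposition requires a saddle-point argument rather than the direct method.

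What the paper actually does with the same functional: (i) it verifies the Cerami condition for $\psi_\lambda$ (using $H(a)(iv)$ to get boundedness of Cerami sequences despite the superlinear term); (ii) it shows that for $\lambda\in(0,\lambda_0)$ the estimate $\psi_\lambda(u)\geq[c_{13}-\lambda c_{14}(\|u\|^{\tau-p}+\|u\|^{r-p})]\|u\|^p$ yields $\inf\{\psi_\lambda(u):\|u\|=t_0\}=m_\lambda>0$ at the radius $t_0$ minimizing $t\mapsto t^{\tau-p}+t^{r-p}$ (this is where smallness of $\lambda$ enters — it buys a positive barrier on one sphere, not coercivity); and (iii) it combines this with $\psi_\lambda(tu)\to-\infty$ to apply the mountain pass theorem, obtaining a critical point at level $\geq m_\lambda>0$, hence nontrivial. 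A minimization route is salvageable only if you restrict to the closed ball $\{\|u\|\leq t_0\}$ and produce an \emph{interior local} minimizer (e.g.\ via the Ekeland variational principle, as the paper does in the proof of Theorem \ref{th19}); your negativity estimate $\psi_\lambda(t\hat u)<0$ for small $t$ would then be the right ingredient. Your remaining steps (testing with $-\tilde u_\lambda^-$, the $L^\infty$ bound, Lieberman regularity, and the Pucci--Serrin maximum principle to reach $D_+$) coincide with the paper and are fine once a nontrivial nonnegative critical point is actually in hand.
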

\begin{proof}
	For $\lambda>0$, we consider the $C^1$-functional $\psi_{\lambda}:W^{1,p}(\Omega)\rightarrow\RR$ defined by
	\begin{eqnarray*}
		&&\psi_{\lambda}(u)=\int_{\Omega}G(Du)dz+\frac{1}{p}\int_{\Omega}\xi(z)|u|^pdz+\frac{1}{p}\int_{\partial\Omega}\beta(z)|u|^pd\sigma-\frac{\lambda\hat{\eta}}{\tau}||u^+||^{\tau}_{\tau}-\frac{\lambda c_{10}}{r}||u^+||^r_r\\
		&&\mbox{for all}\ u\in W^{1,p}(\Omega).
	\end{eqnarray*}
	\begin{claim}\label{cl1}
		For every $\lambda>0$ the functional $\psi_{\lambda}$ satisfies the C-condition.
	\end{claim}
	
	We consider a sequence $\{u_n\}_{n\geq 1}\subseteq W^{1,p}(\Omega)$ such that
	\begin{eqnarray}
		&&|\psi_{\lambda}(u_n)|\leq M_1\ \mbox{for some}\ M_1>0,\ \mbox{all}\ n\in\NN,\label{eq18}\\
		&&(1+||u_n||)\psi'_{\lambda}(u_n)\rightarrow 0\ \mbox{in}\ W^{1,p}(\Omega)^*\ \mbox{as}\ n\rightarrow\infty\,.\label{eq19}
	\end{eqnarray}
	
	From (\ref{eq19}) we have
	\begin{eqnarray}\label{eq20}
		&&\left|\left\langle A(u_n),h\right\rangle+\int_{\Omega}\xi(z)|u_n|^{p-2}u_nhdz+\int_{\partial\Omega}\beta(z)|u_n|^{p-2}u_nhd\sigma-\lambda\hat{\eta}\int_{\Omega}(u^+_n)^{\tau-1}hdz-\right.\nonumber\\
		&&\hspace{1cm}\left.-\lambda c_{10}\int_{\Omega}(u^+_n)^{r-1}hdz\right|\leq\frac{\epsilon_n||h||}{1+||u_n||}\\
		&&\mbox{for all}\ h\in W^{1,p}(\Omega)\ \mbox{as}\ n\rightarrow\infty\,.\nonumber
	\end{eqnarray}
	
	In (\ref{eq20}) we choose $h=-u^-_n\in W^{1,p}(\Omega)$. Then
	\begin{eqnarray}\label{eq21}
		&&\int_{\Omega}(a(-Du^-_n),-Du^-_n)_{\RR^N}dz+\int_{\Omega}\xi(z)(u^-_n)^pdz+\int_{\partial\Omega}\beta(z)(u^-_n)^pd\sigma\leq\epsilon_n\ \mbox{for all}\ n\in\NN,\nonumber\\
		&&\Rightarrow \frac{c_1}{p-1}||Du^-_n||^p_p+\int_{\Omega}\xi(z)(u^-_n)^pdz+\int_{\partial\Omega}\beta(z)(u^-_n)^pd\sigma\leq\epsilon_n\ \mbox{for all}\ n\in\NN\nonumber\\
		&&(\mbox{see Lemma \ref{lem2}}),\nonumber\\
		&&\Rightarrow c_{11}||u^-_n||^p\leq\epsilon_n\ \mbox{for some}\ c_{11}>0,\ \mbox{all}\ n\in\NN\ (\mbox{see hypotheses}\ H_0\ \mbox{and Lemmata \ref{lem5}, \ref{lem6}})\nonumber\\
		&&\Rightarrow u^-_n\rightarrow 0\ \mbox{in}\ W^{1,p}(\Omega).
	\end{eqnarray}
	
	We can always assume that $r_0\leq r<p^*$ (see hypotheses $H(a)(iv)$, $H(f)(i)$). From (\ref{eq18}) and (\ref{eq21}), we have that
	\begin{eqnarray}\label{eq22}
		&&\int_{\Omega}rG(Du^+_n)dz+\frac{r}{p}\int_{\Omega}\xi(z)(u^+_n)^p+\frac{r}{p}\int_{\partial\Omega}\beta(z)(u^+_n)^pd\sigma-\frac{\lambda\hat{\eta}r}{\tau}||u^+_n||^{\tau}_{\tau}-\lambda c_{10}||u^+_n||^r_r\leq M_2\\
		&&\mbox{for some}\ M_2>0,\ \mbox{all}\ n\in\NN\,.\nonumber
	\end{eqnarray}
	
	In (\ref{eq20}) we choose $h=u^+_n\in W^{1,p}(\Omega)$. Then
	\begin{eqnarray}\label{eq23}
		&&-\int_{\Omega}(a(Du^+_n),Du^+_n)_{\RR^N}dz-\int_{\Omega}\xi(z)(u^+_n)^pdz-\int_{\partial\Omega}\beta(z)(u^+_n)^pd\sigma+\lambda\hat{\eta}||u^+_n||^{\tau}_{\tau}+\nonumber\\
		&&\hspace{1cm}\lambda c_{10}||u^+_n||^{r}_{r}\leq\epsilon_n\ \mbox{for all}\ n\in\NN\,.
	\end{eqnarray}
	
	We add (\ref{eq22}) and (\ref{eq23}) and obtain
	\begin{eqnarray}\label{eq24}
		&&\int_{\Omega}\left[rG(Du^+_n)-(a(Du^+_n),Du^+_n)_{\RR^N}\right]dz+\left(\frac{r}{p}-1\right)\left[\int_{\Omega}\xi(z)(u^+_n)^pdz+\right.\nonumber\\
		&&\hspace{1cm}\left.\int_{\partial\Omega}\beta(z)(u^+_n)^pd\sigma\right]\ \leq M_3(1+\lambda||u^+_n||^{\tau}_{\tau})\ \mbox{for some}\ M_3>0,\ \mbox{all}\ n\in\NN\nonumber\\
		&\Rightarrow&c_{12}||u^+_n||^p\leq M_3(1+\lambda||u^+_n||^{\tau})\ \mbox{for some}\ c_{12}>0,\ \mbox{all}\ n\in\NN
	\end{eqnarray}
	(see hypotheses $H(a)(iv),H_0$, use Lemmata \ref{lem5}, \ref{lem6} and recall that $r>p$).
	
	Since $\tau<p$, from (\ref{eq24}) it follows that
	\begin{eqnarray*}
		&&\{u^+_n\}_{n\geq 1}\subseteq W^{1,p}(\Omega)\ \mbox{is bounded,}\\
		&\Rightarrow&\{u_n\}_{n\geq 1}\subseteq W^{1,p}(\Omega)\ \mbox{is bounded (see (\ref{eq21}))}.
	\end{eqnarray*}
	
	So, we may assume that
	\begin{eqnarray}\label{eq25}
		u_n\stackrel{w}{\rightarrow}u\ \mbox{in}\ W^{1,p}(\Omega)\ \mbox{and}\ u_n\rightarrow u\ \mbox{in}\ L^r(\Omega)\ \mbox{and in}\ L^p(\partial\Omega).
	\end{eqnarray}
	
	In (\ref{eq20}) we choose $h=u_n-u\in W^{1,p}(\Omega)$, pass to the limit as $n\rightarrow\infty$ and use (\ref{eq25}). Then
	\begin{eqnarray*}
		&&\lim\limits_{n\rightarrow\infty}\left\langle A(u_n),u_n-u\right\rangle=0,\\
		&\Rightarrow&u_n\rightarrow u\ \mbox{in}\ W^{1,p}(\Omega)\ (\mbox{see (\ref{eq25}) and Proposition \ref{prop4}}).
	\end{eqnarray*}
	
	Therefore for every $\lambda>0$, $\psi_{\lambda}$ satisfies the C-condition.
	
	This proves Claim \ref{cl1}.
	\begin{claim}\label{cl2}
		There exist $\rho>0$ and $\lambda_0>0$ such that for every $\lambda\in(0,\lambda_0)$ we have
		$$\inf[\psi_{\lambda}(u):||u||=\rho]=m_{\lambda}>0=\psi_{\lambda}(0).$$
	\end{claim}
		
		For every $u\in W^{1,p}(\Omega)$ we have
		\begin{eqnarray}\label{eq26}
			\psi_{\lambda}(u)&\geq&c_{13}||u||^p-\lambda c_{14}(||u||^{\tau}+||u||^r)\ \mbox{for some}\ c_{13},c_{14}>0\nonumber\\
			&&(\mbox{see Corollary \ref{cor3}, hypothesis $H_0$ and Lemmata \ref{lem5}, \ref{lem6}})\nonumber\\
			&&=[c_{13}-\lambda c_{14}(||u||^{\tau-p}+||u||^{r-p})]||u||^p.
		\end{eqnarray}
		
		Let $\Im(t)=t^{\tau-p}+t^{r-p},\ t>0$. Since $\tau<p<r$, we have
		$$\Im(t)\rightarrow+\infty\ \mbox{as}\ t\rightarrow 0^+\ \mbox{and as}\ t\rightarrow+\infty\,.$$
		
		Therefore we can find $t_0\in(0,+\infty)$ such that
		$$\Im(t_0)=\inf\limits_{t>0}\Im\,.$$
		
		From (\ref{eq26}) we see that
		\begin{equation}\label{eq27}
			\psi_{\lambda}(u)\geq[c_{13}-\lambda c_{14}\Im||u||]||u||^p\ \mbox{for all}\ u\in W^{1,p}(\Omega).
		\end{equation}
		
		If $||u||=t_0$, then we set $\lambda_0=\frac{c_{13}}{c_{14}\Im(t_0)}>0$ and for all $\lambda\in(0,\lambda_0)$ from (\ref{eq27}) we see that
		$$\inf[\psi_{\lambda}(u):||u||=\rho=t_0]=m_{\lambda}>0=\psi_{\lambda}(0).$$
		
		This proves Claim \ref{cl2}.
		
		Since $r>p$, if $u\in D_+$, then
		\begin{equation}\label{eq28}
			\psi_{\lambda}(tu)\rightarrow-\infty\ \mbox{as}\ t\rightarrow+\infty\,.
		\end{equation}
		
		Claims \ref{cl1} and \ref{cl2} and (\ref{eq28}) permit the use of Theorem \ref{th1} (the mountain pass theorem). So, for every $\lambda\in(0,\lambda_0)$, we can find $\tilde{u}_{\lambda}\in W^{1,p}(\Omega)$ such that
		\begin{equation}\label{eq29}
			\tilde{u}_{\lambda}\in K_{\psi_{\lambda}}\ \mbox{and}\ m_{\lambda}\leq\psi_{\lambda}(\tilde{u}_{\lambda}).
		\end{equation}
		
		From (\ref{eq29}) and Claim \ref{cl2} it follows that
		\begin{eqnarray}\label{eq30}
			&&\tilde{u}_{\lambda}\neq 0\ \mbox{and}\ \psi'_{\lambda}(\tilde{u}_{\lambda})=0,\nonumber\\
			&\Rightarrow&\left\langle A(\tilde{u}_{\lambda}),h\right\rangle+\int_{\Omega}\xi(z)|\tilde{u}_{\lambda}|^{p-2}\tilde{u}_{\lambda}hdz+\int_{\partial\Omega}\beta(z)|\tilde{u}_{\lambda}|^{p-2}\tilde{u}_{\lambda}hd\sigma\nonumber\\
			&&=\lambda \hat{\eta}\int_{\Omega}(\tilde{u}^+_{\lambda})^{\tau-1}hdz+\lambda c_{10}\int_{\Omega}(\tilde{u}^+_{\lambda})^{r-1}hdz\ \mbox{for all}\ h\in W^{1,p}(\Omega).
		\end{eqnarray}
		
		In (\ref{eq30}) we choose $h=-\tilde{u}^-_{\lambda}\in W^{1,p}(\Omega)$. Then
		\begin{eqnarray*}
			&&\frac{c_1}{p-1}||D\tilde{u}^-_{\lambda}||^p_p+\int_{\Omega}\xi(z)(\tilde{u}^-_{\lambda})^pdz+\int_{\partial\Omega}\beta(z)(\tilde{u}^-_{\lambda})^pd\sigma\leq 0\ (\mbox{see Lemma \ref{lem2}})\\
			&\Rightarrow&c_{15}||\tilde{u}^-_{\lambda}||^p\leq 0\ \mbox{for some}\ c_{15}>0\ (\mbox{see hypothesis $H_0$ and Lemmata \ref{lem5}, \ref{lem6}})\\
			&\Rightarrow&\tilde{u}_{\lambda}\geq 0,\ \tilde{u}_{\lambda}\neq 0.
		\end{eqnarray*}
		
		Then (\ref{eq30}) becomes
	\begin{eqnarray}\label{eq31}
		&&\left\langle A(\tilde{u}_\lambda), h \right\rangle + \int_\Omega \xi(z)\tilde{u}^{p-1}_\lambda hdz+\int_{\partial\Omega}\beta(z)\tilde{u}^{p-1}_\lambda hd\sigma=\int_\Omega \left[ \lambda \hat{\eta}\tilde{u}^{\tau-1}_\lambda + \lambda c_{10} \tilde{u}^{r-1}_\lambda \right]hdz\nonumber\\
		&&\mbox{for all}\ h\in W^{1,p}(\Omega),\nonumber\\
		&&\Rightarrow -{\rm div}\, a(D\tilde{u}_\lambda(z))+\xi(z)\tilde{u}_\lambda (z)^{p-1}=\lambda\left[ \hat{\eta}\tilde{u}_\lambda(z)^{\tau-1}+c_{10}\tilde{u}_\lambda (z)^{r-1} \right]\ \mbox{for almost all}\ z\in\Omega, \\
		&&\frac{\partial\tilde{u}_\lambda}{\partial n_a}+\beta (z) \tilde{u}^{p-1}_\lambda =0\ \mbox{on}\ \partial\Omega \nonumber
	\end{eqnarray}
	(see Papageorgiou and R\u adulescu \cite{27})
	
	From (\ref{eq31}) and Hu and Papageorgiou \cite{21} (subcritical case), Papageorgiou and R\u adulescu \cite{36} (critical case), we have
	$$\tilde{u}_\lambda\in L^\infty(\Omega).$$
	
	Then from Lieberman \cite{23} we infer that
	$$\tilde{u}_\lambda\in C_+ \backslash \{0\}.$$
	
	From (\ref{eq31}) we have
	\begin{eqnarray*}
		&	& {\rm div}\, a(D\tilde{u}_\lambda(z))\leq ||\xi||_\infty\tilde{u}_\lambda(z)^{p-1}\ \mbox{for almost all}\ z\in\Omega,\ \mbox{(see hypotheses H($\xi$)), H($\beta$))}\\
		&\Rightarrow & \tilde{u}_\lambda \in D_+\ \mbox{(see Pucci and Serrin \cite[pp. 111, 120]{40})}.
	\end{eqnarray*}
\end{proof}
	
	In fact we can show that for every $\lambda\in(0,\lambda_0)$, problem (\ref{eqa}) admits a smallest positive solution.

Let $\tilde{S}^\lambda_+$ be the set of positive solutions of problem (\ref{eqa}). We have seen in Proposition \ref{prop9} and its proof that
$$\emptyset\neq\tilde{S}^\lambda_+\subseteq D_+\ \mbox{for all}\ \lambda\in(0,\lambda_0).$$

Moreover, as in Filippakis and Papageorgiou \cite{11}, we have that $\tilde{S}^\lambda_+$ is downward directed (that is, if $\tilde{u}_1, \tilde{u}_2\in \tilde{S}^\lambda_+$, then we can find $\tilde{u}\in\tilde{S}^\lambda_+$, such that $\tilde{u}\leq\tilde{u}_1$ and $\tilde{u}\leq\tilde{u}_2$).
\begin{prop}\label{prop10}
	If hypotheses $H(a), H(\xi), H(\beta), H_0, H(f)$ hold and $\lambda\in(0,\lambda_0)$, then problem (\ref{eqa})	 admits a smallest positive solution $\tilde{u}_\lambda\in\tilde{S}^\lambda_+\subseteq D_+$ (that is, $\tilde{u}_\lambda\leq u$ for all $u\in\tilde{S}^\lambda_+$).
\end{prop}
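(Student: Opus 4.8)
The plan is to exploit the fact that $\tilde S^\lambda_+$ is downward directed, so that the infimum of $\tilde S^\lambda_+$ can be realized as the limit of a decreasing sequence of solutions, and then to pass to the limit using the compactness of the operator $A$ together with the nonlinear regularity results and the strong maximum principle. First I would invoke the downward directedness (established as in Filippakis and Papageorgiou \cite{11}) together with a standard result of Dunford--Pettis / Lindel\"of type to produce a sequence $\{\tilde u_n\}_{n\ge1}\subseteq\tilde S^\lambda_+$ with $\tilde u_{n+1}\le\tilde u_n$ for all $n$ and $\inf_n\tilde u_n=\inf\tilde S^\lambda_+$ pointwise (the downward directedness lets one replace any countable family by a decreasing one, and separability of the relevant function spaces guarantees a countable family whose pointwise infimum equals $\inf\tilde S^\lambda_+$). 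Each $\tilde u_n$ satisfies the weak form of \eqref{eqa}, i.e.
$$\langle A(\tilde u_n),h\rangle+\int_\Omega\xi(z)\tilde u_n^{p-1}hdz+\int_{\partial\Omega}\beta(z)\tilde u_n^{p-1}hd\sigma=\lambda\int_\Omega(\hat\eta\tilde u_n^{\tau-1}+c_{10}\tilde u_n^{r-1})hdz$$
for all $h\in W^{1,p}(\Omega)$.

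Next I would show $\{\tilde u_n\}_{n\ge1}$ is bounded in $W^{1,p}(\Omega)$. Since $0\le\tilde u_n\le\tilde u_1$ for all $n$ and $\tilde u_1\in D_+$, the right-hand side is controlled in $L^{p'}(\Omega)$ uniformly in $n$; choosing $h=\tilde u_n$ in the identity above and using Lemma \ref{lem2}(c) together with Lemmata \ref{lem5}, \ref{lem6} and hypothesis $H_0$ yields $c\|\tilde u_n\|^p\le M$ for some constants independent of $n$. Hence, along a subsequence, $\tilde u_n\stackrel{w}{\to}\tilde u_\lambda$ in $W^{1,p}(\Omega)$ and $\tilde u_n\to\tilde u_\lambda$ in $L^r(\Omega)$ and in $L^p(\partial\Omega)$, and $\tilde u_\lambda\ge0$. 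Testing the identity with $h=\tilde u_n-\tilde u_\lambda$ and passing to the limit gives $\limsup_n\langle A(\tilde u_n),\tilde u_n-\tilde u_\lambda\rangle\le0$, so by the $(S)_+$-property in Proposition \ref{prop4} we get $\tilde u_n\to\tilde u_\lambda$ in $W^{1,p}(\Omega)$. Passing to the limit in the weak form then shows $\tilde u_\lambda$ is a nonnegative solution of \eqref{eqa}.

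It remains to rule out $\tilde u_\lambda=0$ and to conclude $\tilde u_\lambda\in D_+$ — this is the main obstacle, since a priori the decreasing limit could degenerate to zero. To exclude this I would use hypothesis $H(f)(iv)$ (which here enters through the lower bound $\hat\eta_0 x^{\tau-1}\le f(z,x)$ on $[0,\delta_0]$, ensuring the full problem \eqref{eqp} has a uniform-in-$\lambda$ ``concave'' lower barrier) together with the auxiliary purely concave-type problem: one shows that the solutions of \eqref{eqa} all dominate a fixed positive function, namely the unique positive solution $\underline u_\lambda\in D_+$ of the problem with reaction $\lambda\hat\eta u^{\tau-1}$ (obtained by direct minimization, since $\tau<p$ makes the energy functional coercive and weakly lower semicontinuous, and uniqueness follows from the strict monotonicity argument using the convexity of $t\mapsto G_0(t^{1/q})$ in $H(a)(iv)$ combined with $\tau<q$, as in \cite{29,35,31}). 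A weak comparison argument — test the difference of the two weak formulations with $(\underline u_\lambda-\tilde u_n)^+$ and use the monotonicity of $A$, of $x\mapsto\xi x^{p-1}$, and the ordering of the reactions — then gives $\underline u_\lambda\le\tilde u_n$ for every $n$, hence $\underline u_\lambda\le\tilde u_\lambda$, so $\tilde u_\lambda\ne0$. Finally, from the weak form we get ${\rm div}\,a(D\tilde u_\lambda(z))\le\|\xi\|_\infty\tilde u_\lambda(z)^{p-1}$ a.e., the regularity of Hu--Papageorgiou \cite{21} / Papageorgiou--R\u adulescu \cite{36} gives $\tilde u_\lambda\in L^\infty(\Omega)$, Lieberman \cite{23} gives $\tilde u_\lambda\in C_+\setminus\{0\}$, and Pucci--Serrin \cite[pp.~111, 120]{40} yields $\tilde u_\lambda\in D_+$. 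By construction $\tilde u_\lambda\le u$ for all $u\in\tilde S^\lambda_+$, so $\tilde u_\lambda$ is the smallest positive solution of \eqref{eqa}.
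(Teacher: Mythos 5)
Your overall architecture -- a decreasing sequence realizing $\inf\tilde S^\lambda_+$, boundedness, the $(S)_+$-property of $A$ to upgrade weak to strong convergence, passage to the limit in the weak formulation, and the purely concave auxiliary problem with reaction $\lambda\hat\eta u^{\tau-1}$ (unique solution via the Diaz--Saa convexity argument) as a lower barrier excluding $\tilde u_\lambda=0$ -- is exactly the paper's. The one step that does not work as written is the comparison $\underline u_\lambda\le\tilde u_n$. Testing the difference of the two weak formulations with $(\underline u_\lambda-\tilde u_n)^+$ gives
\begin{equation*}
0\le\left\langle A(\underline u_\lambda)-A(\tilde u_n),(\underline u_\lambda-\tilde u_n)^+\right\rangle+\cdots=\lambda\int_{\{\underline u_\lambda>\tilde u_n\}}\left[\hat\eta\left(\underline u_\lambda^{\tau-1}-\tilde u_n^{\tau-1}\right)-c_{10}\tilde u_n^{r-1}\right](\underline u_\lambda-\tilde u_n)dz,
\end{equation*}
and on the set $\{\underline u_\lambda>\tilde u_n\}$ one has $\underline u_\lambda^{\tau-1}>\tilde u_n^{\tau-1}$ (recall $\tau>1$), so the right-hand side has no sign. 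The ``ordering of the reactions'' $\hat\eta x^{\tau-1}\le\hat\eta x^{\tau-1}+c_{10}x^{r-1}$ holds at a fixed $x$, but in the comparison the two reactions are evaluated at \emph{different} functions, and precisely where you are trying to rule out $\underline u_\lambda>\tilde u_n$ the concave reaction of the candidate subsolution dominates. This is the standard obstruction to weak comparison for concave (sub-homogeneous) nonlinearities, and it is why the paper does not argue this way.

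The paper's fix, which you should substitute for this step: for each $u\in\tilde S^\lambda_+$ truncate the concave reaction at the level $u(z)$ (as in (\ref{eq33})), minimize the resulting coercive, weakly lower semicontinuous functional $\overline\psi_\lambda$, use $\tau<q<p$ and $H(f)(iv)$ to see that the global minimizer $\overline u^*_\lambda$ is nontrivial, and then test with $(\overline u^*_\lambda-u)^+$. This comparison \emph{does} work, because the truncated reaction above the level $u$ is the frozen function $\lambda\hat\eta u(z)^{\tau-1}\le\lambda(\hat\eta u(z)^{\tau-1}+c_{10}u(z)^{r-1})$, i.e., both sides are now evaluated at the same function $u$. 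Hence $\overline u^*_\lambda\in[0,u]\setminus\{0\}$, so $\overline u^*_\lambda$ solves the untruncated concave problem \ref{eqaulam'}, and the uniqueness you already established forces $\overline u^*_\lambda=\overline u_\lambda\le u$. With this replacement your argument closes. (Minor: in your opening you invoke ``compactness of the operator $A$''; $A$ is not compact but of type $(S)_+$, which is what you actually use.)
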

\begin{proof}
	We consider the following Robin problem
	\begin{equation}\tag*{$(Au_\lambda)'$} \label{eqaulam'}
		\left\{\begin{array}{ll}
			-{\rm div}\, a(Du(z))+\xi(z)u(z)^{p-1}=\lambda\hat{\eta}u(z)^{\tau-1}\ \mbox{in}\ \Omega, \\
			\frac{\partial u}{\partial n_a}+\beta(z)u^{p-1}=0\ \mbox{on}\ \partial\Omega,\ u>0,\ \lambda>0.
		\end{array}\right\}
	\end{equation}
	
	Since $\tau<p$, a straightforward application of the direct method of the calculus of variations reveals that for every $\lambda>0$, problem \ref{eqaulam'} admits a positive solution $\overline{u}_\lambda\in D_+$ (nonlinear regularity theory and the nonlinear maximum principle).
	\begin{claim}\label{claim1}
		$\overline{u}_\lambda\in D_+$ is the unique positive solution of problem \ref{eqaulam'}.
	\end{claim}
	
	Consider the integral functional $j:L^1(\Omega)\rightarrow\overline{R}=\RR \cup\{+\infty\}$ defined by
	\begin{eqnarray*}
		j(u)=\left\{\begin{array}{cl}
		\int_\Omega G(Du^{^1/_q})dz+\frac{1}{p}\int_\Omega\xi(z)(u^{\frac{p}{q}})dz+\frac{1}{p}\int_{\partial\Omega}\beta(z)(u^{\frac{p}{q}})d\sigma\ & \mbox{if}\ u\geq0,\ w^{\frac{1}{q}}\in W^{1,p}(\Omega) \\
		+\infty	& \mbox{otherwise}.
	\end{array}\right.
	\end{eqnarray*}
	
	Let $u_1,u_2\in {\rm dom}\,j=\{u\in L^{1}(\Omega):j(u)<+\infty\}$ (the effective domain of the functional $j(\cdot)$) and set $u=((1-t)u_1+tu_2)^{^1/_q}$ with $t\in[0,1]$. Using Lemma 1 of Diaz and Saa \cite{10} we have
	\begin{equation}\label{eq32}
		|Du(z)|\leq\left[(1-t)|Du_1(z)^{\frac{1}{q}}|^q+t|Du_2(z)^{\frac{1}{q}}|^q\right]\ \mbox{for almost all}\ z\in\Omega	.
	\end{equation}
	
	Then we have
	\begin{eqnarray*}
		G_0(|Du(z)|)&\leq & G_0\left( (1-t)|Du_1(z)^{\frac{1}{q}}|^q+t|Du_2(z)^{\frac{1}{q}}|^q \right)\ \mbox{for almost all}\ z\in\Omega \\
					&	  & \mbox{(see (\ref{eq32}) and recall that $G_0(\cdot)$ is increasing)}\\
					&\leq & (1-t)G_0(|Du_1(z)^{\frac{1}{q}}|)+tG_0(|Du_2(z)|^{\frac{1}{q}})\ \mbox{for almost all}\ z\in\Omega \\
					&	  & \mbox{(see hypothesis H(a)(iv)),}\\
   \Rightarrow G(Du(z)) &\leq & (1-t)G(Du_1(z))^{\frac{1}{q}}+tG(Du_2(z)^{\frac{1}{q}})\ \mbox{for almost all}\ z\in\Omega, \\
   \Rightarrow j(\cdot)\ \mbox{is convex}& &\mbox{(recall that $q<p$ and see hypotheses H($\xi$), H($\beta$)).}
	\end{eqnarray*}
	
	By Fatou's lemma, we see that $j(\cdot)$ is also lower semicontinuous.
	
	Let $\overline{v}_\lambda\in W^{1,p}(\Omega)$ be another positive solution of problem \ref{eqaulam'}. Again we have $\overline{v}_\lambda\in D_+$. If $h\in C^1(\overline{\Omega})$, then for $t>0$ small we have
	$$\overline{u}^q_\lambda+th\in {\rm dom}\,j\ \mbox{and}\ \overline{v}^q_\lambda+th\in {\rm dom}\, j.$$
	
	Then we can easily show that $j(\cdot)$ is G\^ateaux differentiable at $\overline{u}^q_\lambda$ and at $\overline{v}^q_\lambda$ in the direction $h$. Moreover, via the chain rule and the nonlinear Green's theorem (see Gasinski and Papageorgiou \cite[p. 210]{14}), we have
	\begin{eqnarray*}
		j'(\overline{u}^q_\lambda)(h)&=&\frac{1}{q}\int_\Omega\frac{-{\rm div}\, a(D\overline{u}_\lambda)+\xi(z)\overline{u}^{p-1}_\lambda}{\overline{u}^{q-1}_\lambda}hdz \\
		j'(\overline{v}^q_\lambda)(h)&=&\frac{1}{q}\int_\Omega\frac{-{\rm div}\, a(D\overline{v}_\lambda)+\xi(z)\overline{v}^{p-1}_\lambda}{\overline{v}^{q-1}_\lambda}hdz \\
		&&\mbox{for all}\ h\in W^{1,p}(\Omega).
	\end{eqnarray*}
	The convexity of $j(\cdot)$ implies the monotonicity of $j'(\cdot)$. So
	\begin{eqnarray*}
		0 &\leq& \int_\Omega\left(\frac{-{\rm div}\, a(D\overline{u}_\lambda)+\xi(z)\overline{u}^{p-1}_\lambda}{\overline{u}^{q-1}_\lambda}-\frac{-{\rm div}\, a(D\overline{v}_\lambda)+\xi(z)\overline{v}^{p-1}_\lambda}{\overline{v}^{q-1}_\lambda}\right) (\overline{u}^q_\lambda-\overline{v}^q_\lambda)dz \\
		&\leq &\lambda\hat{\eta}\int_\Omega\left[\frac{1}{\overline{u}^{\tau-q}_\lambda}-\frac{1}{\overline{v}^{\tau-q}_\lambda}\right](\overline{u}^q_\lambda-\overline{v}^q_\lambda)dz\ \mbox{(see problem \ref{eqaulam'})}, \\
		\Rightarrow\overline{u}_\lambda &=& \overline{v}_\lambda\ \mbox{(since $\tau<q$).}
	\end{eqnarray*}
	
	This proves Claim \ref{claim1}.		
	\begin{claim}\label{claim2}
		$\overline{u}_\lambda\leq u$ for all $u\in \tilde{S}^\lambda_+$.
	\end{claim}
	
	Let $u\in \tilde{S}^\lambda_+$. We introduce the following Carath\'eodory function
	\begin{equation}\label{eq33}
		k_\lambda(z,x)=\left\{\begin{array}{lll}
			0	&	\mbox{if}\ x<0	& \\
			\lambda\hat{\eta}x^{\tau-1} & \mbox{if}\ 0\leq x\leq u(z) & \mbox{for all}\ (z,x)\in\Omega\times\RR	\\
			\lambda\hat{\eta}u(z)^{\tau-1} & \mbox{if}\ u(z)<x. 	
		\end{array}\right.
	\end{equation}
	
	We set $K_\lambda(z,x)=\int^x_0 k_\lambda(z,s)ds$ and consider the $C^{1}-$functional $\overline{\psi}_\lambda:W^{1,p}(\Omega)\rightarrow \RR$ defined by
	$$\overline{\psi}_\lambda(y)=\int_\Omega G(Dy)dz +\frac{1}{p}\int_\Omega\xi(z)|y|^pdz+\frac{1}{p}\int_{\partial\Omega}\beta(z)|y|^pd\sigma-\int_\Omega K_\lambda(z,y)dz\ \mbox{for all}\ y\in W^{1,p}(\Omega).$$
	
	From (\ref{eq33}), Lemma \ref{lem2} and hypothesis $H_0$ together with Lemmata \ref{lem5} and \ref{lem6}, we see that the functional $\overline{\psi}_\lambda$ is coercive. Also, the Sobolev embedding theorem and the compactness of the trace map, imply that $\overline{\psi}_\lambda$ is sequentially weakly lower semicontinuous. So, by the Weierstrass theorem, we can find $\overline{u}^*_\lambda\in W^{1,p}(\Omega)$ such that
	\begin{equation}\label{eq34}
		\overline{\psi}_\lambda(\overline{u}^*_\lambda)=\inf\left[\overline{\psi}_\lambda(u):u\in W^{1,p}(\Omega)\right].
	\end{equation}
	Hypothesis H(a)(iv) and Corollary \ref{cor3} imply that
	\begin{equation}\label{eq35}
		G(y)\leq c_{16}(|y|^q+|y|^p)\ \mbox{for all}\ y\in\RR^{N},\ \mbox{some}\ c_{16}>0.
	\end{equation}
	
	Since $\tau<q<p$, if $v\in D_+$, then for $t\in[0,1]$ small (such that $tv\leq u$, recall that $u\in D_+$), we have
	\begin{eqnarray*}
		&&\overline{\psi}(tv)\leq c_{16}t^q(||Dv||^q_q+||Dv||^p_p)+\frac{t^p}{p}\left[\int_\Omega\xi(z)v^pdz+\int_{\partial\Omega}\beta(z)v^pd\sigma\right]\\
		& & -\frac{\lambda\hat{\eta}t^\tau}{\tau}||v||^\tau_\tau < 0\ \mbox{(see (\ref{eq35}))}\\
		&\Rightarrow&\overline{\psi}_\lambda(\overline{u}^*_\lambda)<0=\overline{\psi}_\lambda(0)\ \mbox{(see (\ref{eq34}))},\\
		&\Rightarrow&\overline{u}^*_\lambda\neq0.
	\end{eqnarray*}
	
	From (\ref{eq34}) we have
	\begin{eqnarray}\label{eq36}
		&&\overline{\psi}'_\lambda(\overline{u}^*_\lambda)=0,\nonumber\\
		&\Rightarrow&\left\langle A(\overline{u}^*_\lambda),h\right\rangle+\int_\Omega\xi(z)|\overline{u}^*_\lambda|^{p-2}\overline{u}^*_\lambda hdz+\int_{\partial\Omega}\beta(z)|\overline{u}^*_\lambda|^{p-2}\overline{u}^*_\lambda hd\sigma=\int_\Omega k_\lambda(z,\overline{u}^*_\lambda)hdz
	\end{eqnarray}
	for all $h\in W^{1,p}(\Omega)$.
	
	In (\ref{eq36})  we first choose $-(\overline{u}^{*}_\lambda)^- \in W^{1,p}(\Omega)$. Then
	\begin{eqnarray*}
		&&c_{17}||(\overline{u}^*_\lambda)^-||^p\leq 0\ \mbox{for some}\ c_{17}>0\\
		&&\mbox{(see (\ref{eq34}), Lemma \ref{lem2}, hypothesis $H_0$ and Lemmata \ref{lem5}, \ref{lem6})}\\
		&\Rightarrow &\overline{u}^*_\lambda \geq0,\overline{u}^*_\lambda\neq0.
	\end{eqnarray*}
	
	Next, in (\ref{eq36}) we choose $h=(\overline{u}^*_\lambda-u)^+\in W^{1,p}(\Omega)$. Then
	\begin{eqnarray*}
		&&\left\langle A(\overline{u}^*_{\lambda}),(\overline{u}^*_\lambda -u)^+ \right\rangle +\int_\Omega\xi(z)(\overline{u}^*_{\lambda})^{p-1}(\overline{u}^*_{\lambda}-u)^+dz+\int_{\partial\Omega}\beta(z)(\overline{u}^*_{\lambda})^{p-1}(\overline{u}^*_{\lambda}-u)^+d\sigma \\
		&=&\int_\Omega\lambda\hat{\eta}u^{\tau-1}(\overline{u}^*_{\lambda}-u)^+dz\ \mbox{(see (\ref{eq33}))} \\
		&\leq &\int_\Omega\left[\lambda\hat{\eta}u^{\tau-1}+\lambda c_{10}u^{r-1}\right](\overline{u}^*_{\lambda}-u)^+dz \\
		&=&\left\langle A(u),(\overline{u}^*_{\lambda} -u)^+ \right\rangle + \int_\Omega\xi(z)u^{p-1}(\overline{u}^*_\lambda-u)^+dz+\int_{\partial\Omega}\beta(z)u^{p-1}(\overline{u}^*_\lambda-u)^+d\sigma\\
		&&\mbox{(since $u\in \tilde{S}^\lambda_+$)},\\
		&\Rightarrow &\overline{u}^*_\lambda\leq u.
	\end{eqnarray*}
	
	So, we have proved that
	\begin{eqnarray*}
		&&\overline{u}^*_\lambda\in[0,u]=\{y\in W^{1,p}(\Omega):0\leq y(z)\leq u(z)\ \mbox{for almost all}\ z\in\Omega \},\overline{u}^*_\lambda\neq 0,\\
		&\Rightarrow &\overline{u}^*_\lambda\ \mbox{is a positive solution of \ref{eqaulam'}} \\
		&\Rightarrow &\overline{u}^*_\lambda=\overline{u}_\lambda\ \mbox{(see Claim \ref{claim1})} \\
		&\Rightarrow &\overline{u}_\lambda \leq u\ \mbox{for all}\ u\in \tilde{S}^\lambda_+.
	\end{eqnarray*}
	
	This proves Claim \ref{claim2}.
	
	Invoking Lemma 3.10 of Hu and Papageorgiou \cite[p. 178]{19}, we can find a decreasing sequence $\{u_n\}_{n\geq 1}\subseteq \tilde{S}^\lambda_+$ such that
	$$\inf \tilde{S}^\lambda_+=\inf\limits_{n\geq 1}u_n.$$
	
	Evidently $\{u_n\}_{n\geq 1}\subseteq W^{1,p}(\Omega)$ is bounded and so we may assume that
	\begin{equation}\label{eq37}
		u_n\stackrel{w}{\rightarrow}\tilde{u}^*_\lambda\ \mbox{in}\ W^{1,p}(\Omega)\ \mbox{and}\ u_n\rightarrow\tilde{u}^*_\lambda\ \mbox{in}\ L^r(\Omega)\ \mbox{and in}\ L^p(\partial\Omega).
	\end{equation}
	
	In (\ref{eq36}) we choose $h=u_n-\tilde{u}^*_\lambda\in W^{1,p}(\Omega)$, pass to the limit as $n\rightarrow \infty$ and use (\ref{eq37}).
	
	Then
	\begin{eqnarray}\label{eq38}
		&&\lim\limits_{n\rightarrow\infty}\left\langle A(u_n),u_n-\tilde{u}^*_\lambda\right\rangle=0, \nonumber \\
		&\Rightarrow &u_n\rightarrow\tilde{u}^*_\lambda\ \mbox{in}\ W^{1,p}(\Omega)\ \mbox{(see (\ref{eq37}) and Proposition \ref{prop4}).}
	\end{eqnarray}
	So, if in (\ref{eq36}) we pass to the limit as $n\rightarrow\infty$ and use (\ref{eq38}), then
	\begin{equation}\label{eq39}
		\left\langle A(\tilde{u}^*_\lambda),h\right\rangle+\int_\Omega\xi(z)(\tilde{u}^*_\lambda)^{p-1}hdz+\int_{\partial\Omega}\beta(z)(\tilde{u}^*_\lambda)^{p-1} hd\sigma=\int_\Omega \left[\lambda\hat{\eta}(\tilde{u}^*_\lambda)^{\tau-1}+\lambda c_{10}(\tilde{u}^*_\lambda)^{r-1}\right]hdz\
	\end{equation}
	for all $h\in W^{1,p}(\Omega)$.
	
	Also, from Claim \ref{claim2} we have
	\begin{eqnarray}\label{eq40}
		&&\overline{u}_\lambda\leq u_n\ \mbox{for all}\ n\in\NN, \nonumber \\
		&\Rightarrow &\overline{u}_\lambda\leq\tilde{u}^*_\lambda.
	\end{eqnarray}
	
	From (\ref{eq39}) and (\ref{eq40}) it follows that
	$$\tilde{u}^*_\lambda\in \tilde{S}^\lambda_+\ \mbox{and}\ \tilde{u}^*_\lambda=\inf\tilde{S}^\lambda_+.$$	
\end{proof}

Let
$$\mathcal{L}=\{\lambda>0:\ \mbox{problem (\ref{eqp}) admits a positive solution}\}.$$
\begin{prop}\label{prop11}
	If hypotheses H(a), H($\xi$), H($\beta$), $H_0$, H(f) hold then $\mathcal{L}\neq\emptyset$.	
\end{prop}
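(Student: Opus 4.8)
The plan is to show that every sufficiently small $\lambda>0$ belongs to $\mathcal{L}$ by using the solution $\tilde u_\lambda\in D_+$ of the auxiliary problem \eqref{eqa} as an upper barrier and then solving a truncated version of \eqref{eqp} below it. Recall from Proposition \ref{prop9} that for $\lambda\in(0,\lambda_0)$ we have $\tilde u_\lambda\in D_+$, and from the growth estimate \eqref{eq17} that $f(z,x)\le\hat\eta x^{\tau-1}+c_{10}x^{r-1}$ for all $x\ge 0$. The first step is to introduce the truncation
$$
g_\lambda(z,x)=\begin{cases}
0 & \text{if }x<0,\\
\lambda f(z,x) & \text{if }0\le x\le\tilde u_\lambda(z),\\
\lambda f(z,\tilde u_\lambda(z)) & \text{if }\tilde u_\lambda(z)<x,
\end{cases}
$$
which is a Carath\'eodory function bounded by $\lambda f(\cdot,\tilde u_\lambda(\cdot))\in L^\infty(\Omega)$ since $\tilde u_\lambda\in D_+$.

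Next I would set $G_\lambda(z,x)=\int_0^x g_\lambda(z,s)\,ds$ and consider the $C^1$-functional
$$
\varphi_\lambda(u)=\int_\Omega G(Du)\,dz+\frac1p\int_\Omega\xi(z)|u|^p\,dz+\frac1p\int_{\partial\Omega}\beta(z)|u|^p\,d\sigma-\int_\Omega G_\lambda(z,u)\,dz
$$
on $W^{1,p}(\Omega)$. Because $g_\lambda(z,\cdot)$ is bounded (uniformly in $x$, by an $L^\infty$ function), Corollary \ref{cor3} together with hypothesis $H_0$ and Lemmata \ref{lem5}, \ref{lem6} shows that $\varphi_\lambda$ is coercive; the Sobolev embedding and compactness of the trace map give sequential weak lower semicontinuity. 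Hence by the Weierstrass theorem there is $u_\lambda\in W^{1,p}(\Omega)$ with $\varphi_\lambda(u_\lambda)=\inf\varphi_\lambda$. Testing $\varphi_\lambda'(u_\lambda)=0$ with $-u_\lambda^-$ gives $u_\lambda\ge 0$ (using Lemma \ref{lem2} and $H_0$), and testing with $(u_\lambda-\tilde u_\lambda)^+$ and comparing with \eqref{eq31} — using monotonicity of $A$, the definition of $g_\lambda$, and $f(z,x)\le\hat\eta x^{\tau-1}+c_{10}x^{r-1}$ — yields $u_\lambda\le\tilde u_\lambda$. So $u_\lambda\in[0,\tilde u_\lambda]$ and on this order interval $g_\lambda(z,u_\lambda)=\lambda f(z,u_\lambda)$, which means $u_\lambda$ solves \eqref{eqp} weakly; nonlinear regularity (Hu–Papageorgiou/Papageorgiou–R\u adulescu, then Lieberman \cite{23}) puts $u_\lambda\in C_+$.

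The only thing that remains is to rule out $u_\lambda=0$, and this is where the concave term $H(f)(iv)$ is essential — this is the main obstacle. Pick $v\in D_+$ with $v\le\tilde u_\lambda$ (shrink $v$ if necessary, using $\tilde u_\lambda\in D_+$); then for small $t>0$ we have $tv\le\delta_0$ pointwise, so $f(z,tv)\ge\hat\eta_0(tv)^{\tau-1}$ by $H(f)(iv)$, and using the estimate $G(y)\le c_{16}(|y|^q+|y|^p)$ from \eqref{eq35} one gets
$$
\varphi_\lambda(tv)\le c_{16}t^q(\|Dv\|_q^q+\|Dv\|_p^p)+\frac{t^p}{p}\Big[\int_\Omega\xi(z)v^p\,dz+\int_{\partial\Omega}\beta(z)v^p\,d\sigma\Big]-\frac{\lambda\hat\eta_0 t^\tau}{\tau}\|v\|_\tau^\tau.
$$
Since $\tau<q<p$, the negative $t^\tau$ term dominates as $t\to0^+$, so $\varphi_\lambda(tv)<0$ for $t$ small, whence $\inf\varphi_\lambda<0=\varphi_\lambda(0)$ and therefore $u_\lambda\ne 0$. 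Finally a test with $h\in W^{1,p}(\Omega)$ and $H(\xi),H(\beta)$ give $\operatorname{div}a(Du_\lambda)\le\|\xi\|_\infty u_\lambda^{p-1}$ a.e., so by the nonlinear maximum principle (Pucci–Serrin \cite[pp. 111, 120]{40}) in fact $u_\lambda\in D_+$. This shows $(0,\lambda_0)\subseteq\mathcal{L}$, so $\mathcal{L}\ne\emptyset$. \qed
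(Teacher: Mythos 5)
Your proposal is correct and follows essentially the same route as the paper's proof: truncate the reaction at a solution $\tilde u_\lambda\in D_+$ of the auxiliary problem \eqref{eqa}, minimize the coercive truncated functional, use $H(f)(iv)$ with $\tau<q<p$ to push the infimum below zero, and squeeze the minimizer into $[0,\tilde u_\lambda]$ so that it solves \eqref{eqp}, finishing with nonlinear regularity and the maximum principle. (One cosmetic point: $g_\lambda(z,x)$ need not be bounded by $\lambda f(z,\tilde u_\lambda(z))$ pointwise, since $f(z,\cdot)$ is not assumed monotone; but it is bounded by an $L^\infty$ function of $z$ via \eqref{eq17} and $\tilde u_\lambda\in L^\infty(\Omega)$, which is all that coercivity requires.)
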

\begin{proof}
	Let $\tilde{u}^*_\lambda\in \tilde{S}^\lambda_+\subseteq D_+$ be the minimal positive solution of problem (\ref{eqa}) $(\lambda\in(0,\lambda_0))$, see Proposition \ref{prop10}.
	
	We introduce the following truncation of the reaction term in problem (\ref{eqp})
	\begin{equation}\label{eq41}
		\gamma_\lambda(z,x)=\left\{\begin{array}{ll}
			\lambda f(z,x) & \mbox{if}\ x\leq\tilde{u}^*_\lambda(z)\\
			\lambda f(z,\tilde{u}^*_\lambda(z)) & \mbox{if}\ \tilde{u}^*_\lambda(z)<x.
		\end{array}\right.
	\end{equation}
	
	This is a Carath\'eodory function. We set $\Gamma_\lambda(z,x)=\int^x_0\gamma_\lambda(z,s)ds$ and consider the $C^1-$functional $\hat{\varphi}:W^{1,p}(\Omega)\rightarrow\RR$ defined by
	$$\hat{\varphi}_\lambda(u)=\int_\Omega G(Du)dz+\frac{1}{p}\int_\Omega\xi(z)|u|^pdz+\frac{1}{p}\int_{\partial\Omega}\beta(z)|u|^pd\sigma-\int_\Omega\Gamma_\lambda(z,u)dz\ \mbox{for all}\ u\in W^{1,p}(\Omega).$$
	
	From (\ref{eq41}), Corollary \ref{cor3}, hypothesis $H_0$ and Lemmata \ref{lem5}, \ref{lem6}, we see that $\hat{\varphi}_\lambda(\cdot)$ is coercive. Also, it is sequentially weakly lower semicontinuous. So, we can find $u_\lambda\in W^{1,p}(\Omega)$ such that
	\begin{equation}\label{eq42}
		\hat{\varphi}_\lambda(u_\lambda)=\inf\left[\hat{\varphi}_\lambda(u):u\in W^{1,p}(\Omega)\right].
	\end{equation}
	
	Let $\delta_0>0$ be as postulated by hypothesis H(f)(iv). Given $u\in D_+$, we can find $t\in(0,1)$ small such that
	\begin{equation}\label{eq43}
		tu(z)\in(0,\delta_0]\ \mbox{for all}\ z\in\overline{\Omega}.
	\end{equation}
	
	Then hypothesis H(f)(iv) implies that
	\begin{equation}\label{eq44}
		F(z,tu(z))\geq\frac{\hat{\eta}_0}{\tau}(tu(z))^\tau\ \mbox{for almost all}\ z\in\Omega\ \mbox{(see (\ref{eq43})).}
	\end{equation}
	
	We have
	\begin{eqnarray}\label{eq45}
		\hat{\varphi}_\lambda(tu)\leq c_{16}t^q(||Du||^q_q+||Du||^p_p)+\frac{t^p}{p}\int_\Omega\xi(z)u^pdz & + & \frac{t^p}{p}\int_{\partial\Omega}\beta(z)u^pd\sigma \nonumber \\
		& - & \frac{\lambda\hat{\eta}_0}{\tau}t^\tau||u||^\tau_\tau\ \nonumber \\
		&	& \mbox{(see (\ref{eq35}) and recall that $t\in(0,1)$)} \nonumber \\
		\leq c_{18}t^q-\lambda c_{19}t^p\ \mbox{for some}\ c_{18},\, c_{19}>0.
	\end{eqnarray}
	
	Since $\tau<q<p$, from (\ref{eq45}) it follows that by choosing $t\in(0,1)$ even smaller if necessary, we have
	\begin{eqnarray*}
		&&\hat{\varphi}(tu)<0,\\
		&\Rightarrow &\hat{\varphi}_\lambda(u_\lambda)<0=\hat{\varphi}_\lambda(0)\ \mbox{(see (\ref{eq42}))}\\
		&\Rightarrow &u_\lambda\neq0.
	\end{eqnarray*}
	
	From (\ref{eq42}) we have
	$$\hat{\varphi}'_\lambda(u_\lambda)=0,$$
	\begin{equation}\label{eq46}
		\Rightarrow\left\langle A(u_\lambda),h\right\rangle+\int_\Omega\xi(z)|u_\lambda|^{p-2}u_\lambda hdz+\int_{\partial\Omega}\beta(z)|u_\lambda|^{p-2}u_\lambda hd\sigma =\int_\Omega \gamma_\lambda(z,u_\lambda)hdz
	\end{equation}
	for all $h\in W^{1,p}(\Omega)$.
	
	In (\ref{eq46}) we choose $h=-u^-_\lambda\in W^{1,p}(\Omega)$. Then as before
	\begin{eqnarray*}
		&&c_{20}||u^-_\lambda||^p\leq0\ \mbox{for some}\ c_{20}>0, \\
		&\Rightarrow & u_\lambda \geq 0, u_\lambda\neq 0.
	\end{eqnarray*}
	
	Also, in (\ref{eq46}) we choose $h=(u_\lambda-\tilde{u}^*_\lambda)^+\in W^{1,p}(\Omega)$. Then
	\begin{eqnarray*}
		& &\left\langle A(u_\lambda),(u_\lambda-\tilde{u}^*_\lambda)^+\right\rangle+\int_\Omega\xi(z)u^{p-1}_\lambda (u_\lambda-\tilde{u}^*_\lambda)^+dz+\int_{\partial\Omega}\beta(z)u^{p-1}_\lambda(u_\lambda-\tilde{u}^*_\lambda)^+d\sigma \\
		&=&\int_\Omega\lambda f(z,\tilde{u}^*_\lambda)(u_\lambda-\tilde{u}^*_\lambda)^+dz\ \mbox{(see (\ref{eq41}))} \\
		&\leq & \int_\Omega \lambda\left[ \hat{\eta}(\tilde{u}^*_\lambda)^{\tau-1}+c_{10}(\tilde{u}^*_\lambda)^{r-1}\right](u_\lambda-\tilde{u}^*_\lambda)^+dz\ \mbox{(see (\ref{eq17}))} \\
		&=&\left\langle A(\tilde{u}^*_\lambda),(u_\lambda-\tilde{u}^*_\lambda)^+\right\rangle+\int_\Omega\xi(z)(\tilde{u}^*_\lambda)^{p-1} (u_\lambda-\tilde{u}^*_\lambda)^+dz+\\
		&&\int_{\partial\Omega}\beta(z)(\tilde{u}^*_\lambda)^{p-1}(u_\lambda-\tilde{u}^*_\lambda)^+d\sigma\ \mbox{(because $\tilde{u}^*_\lambda\in \tilde{S}^\lambda_+$),} \\
		&\Rightarrow & u_\lambda \leq\tilde{u}^*_\lambda.
	\end{eqnarray*}
	
	So, we have proved that
	\begin{eqnarray*}
		& &u_\lambda\in [0,\tilde{u}^*_\lambda],\ u_\lambda\neq0, \\	
		&\Rightarrow & u_\lambda\ \mbox{is a positive solution of problem (\ref{eqp}) (see (\ref{eq41})).}
	\end{eqnarray*}
	
	As before the nonlinear regularity theory implies that
	$$u_\lambda\in C_+\backslash\{0\}.$$
	
	Let $\rho=||u_\lambda||_\infty$ and let $\hat{\xi}_\rho>0$ be as postulated by hypothesis $H(f)(v)$. Then
	\begin{eqnarray*}
		& &-{\rm div}\,a(Du_\lambda(z))+(\xi(z)+\hat{\xi}_\rho)u_\lambda(z)^{p-1}\geq0\ \mbox{for almost all}\ z\in\Omega \\
		&\Rightarrow & {\rm div}\,a(Du_\lambda(z))\leq\left[||\xi||_\infty+\hat{\xi}_\rho\right]u_\lambda(z)^{p-1}\ \mbox{for almost all}\ z\in\Omega\ \mbox{(see hypothesis H($\xi$)),} \\
		&\Rightarrow & u_\lambda\in D_+\ \mbox{(see Pucci and Serrin \cite[pp. 111, 120]{40}).}
	\end{eqnarray*}
	
	Therefore we infer that
	$$(0,\lambda_0)\subseteq\mathcal{L},\ \mbox{hence}\ \mathcal{L}\neq\emptyset.$$
\end{proof}

Let $S^\lambda_+$ be the set of positive solutions of problem (\ref{eqp}). A byproduct of the proof of Proposition \ref{prop11} is the following corollary.
\begin{corollary}\label{cor12}
	If hypotheses $H(a), H(\xi), H(\beta), H_0, H(f)$ hold, then $S^\lambda_+\subseteq D_+$.
\end{corollary}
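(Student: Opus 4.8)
The plan is to observe that the last part of the proof of Proposition \ref{prop11} never used any special property of the particular minimizer $u_\lambda$ produced there; it used only that $u_\lambda$ is a nontrivial nonnegative weak solution of \eqref{eqp}. Hence the same chain of estimates applies verbatim to an arbitrary $u\in S^\lambda_+$, and the corollary is really just the remark that the construction of $u_\lambda$ was irrelevant to that final argument.

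Concretely, I would fix $u\in S^\lambda_+$, so that $u\in W^{1,p}(\Omega)$, $u\ge 0$, $u\neq 0$ and
$$\left\langle A(u),h\right\rangle+\int_\Omega\xi(z)u^{p-1}hdz+\int_{\partial\Omega}\beta(z)u^{p-1}hd\sigma=\lambda\int_\Omega f(z,u)hdz\quad\mbox{for all}\ h\in W^{1,p}(\Omega),$$
and then run the three standard steps. First, by hypothesis $H(f)(i)$ the reaction satisfies $0\le f(z,u(z))\le a(z)(1+u(z)^{r-1})$ with $a\in L^\infty(\Omega)$ and $p<r<p^*$, so the regularity results of Hu and Papageorgiou \cite{21} (subcritical case) and Papageorgiou and R\u adulescu \cite{36} (critical case) give $u\in L^\infty(\Omega)$. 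Second, Lieberman's nonlinear regularity theory \cite{23} for the generalized conormal boundary condition appearing in \eqref{eqp} then yields $u\in C^{1,\alpha}(\overline\Omega)$ for some $\alpha\in(0,1)$, hence $u\in C_+\backslash\{0\}$. Third, setting $\rho=||u||_\infty$ and taking $\hat\xi_\rho>0$ as provided by hypothesis $H(f)(v)$ for this $\rho$, and using that $f(z,x)\ge 0$ for all $x\ge 0$ (from $H(f)$) together with $\xi\ge 0$ (from $H(\xi)$), I would derive from the equation
$$-{\rm div}\,a(Du(z))+(\xi(z)+\hat\xi_\rho)u(z)^{p-1}=\lambda f(z,u(z))+\hat\xi_\rho u(z)^{p-1}\ge 0\quad\mbox{for almost all}\ z\in\Omega,$$
whence ${\rm div}\,a(Du(z))\le(||\xi||_\infty+\hat\xi_\rho)u(z)^{p-1}$ for almost all $z\in\Omega$. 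The nonlinear maximum principle of Pucci and Serrin \cite[pp. 111, 120]{40} then forces $u(z)>0$ for all $z\in\Omega$ and $\partial u/\partial n<0$ at any boundary point where $u$ vanishes; together with $u\in C^1(\overline\Omega)$ this means precisely $u\in D_+$. Since $u\in S^\lambda_+$ was arbitrary, $S^\lambda_+\subseteq D_+$.

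I do not expect a genuine obstacle here; the content is entirely in recognizing that the minimization argument of Proposition \ref{prop11} played no role in the positivity upgrade. The only two ingredients that actually need to be verified are the growth bound $H(f)(i)$, which is exactly what the $L^\infty$-estimate of \cite{21,36} requires, and the sign condition $f(z,\cdot)\ge 0$, which is what makes the differential inequality above available for the Pucci--Serrin maximum principle.
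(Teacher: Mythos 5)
Your proposal is correct and is exactly what the paper intends: the corollary is stated as a byproduct of the proof of Proposition \ref{prop11}, whose final regularity and maximum-principle steps (the $L^{\infty}$-bound via $H(f)(i)$, Lieberman's theory, and the Pucci--Serrin argument with $\hat{\xi}_{\rho}$ from $H(f)(v)$) use only that $u_{\lambda}$ is a nontrivial nonnegative solution. Your observation that the minimization construction plays no role in the positivity upgrade is precisely the point.
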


The next proposition reveals a basic property of the set $\mathcal{L}$ of admissible parameter values.
\begin{prop}\label{prop13}
	If hypotheses $H(a),H(\xi),H(\beta),H_0,H(f)$ hold, $\lambda\in\mathcal{L}$ and $\alpha\in(0,\lambda)$, then $\alpha\in\mathcal{L}$.
\end{prop}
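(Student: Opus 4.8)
The plan is to use the positive solution available at the larger parameter $\lambda$ as an upper barrier for the problem at $\alpha$. Since $\lambda\in\mathcal{L}$, Corollary \ref{cor12} gives $u_\lambda\in S^\lambda_+\subseteq D_+$, and because $\alpha<\lambda$ and $f\geq 0$ by $H(f)$, the function $u_\lambda$ is a weak supersolution of \eqref{eqp} with parameter $\alpha$. First I would truncate the reaction at $u_\lambda$ from above and at $0$ from below, setting
$$\gamma_\alpha(z,x)=\left\{\begin{array}{ll}0 & \mbox{if}\ x<0\\ \alpha f(z,x) & \mbox{if}\ 0\leq x\leq u_\lambda(z)\\ \alpha f(z,u_\lambda(z)) & \mbox{if}\ u_\lambda(z)<x,\end{array}\right.$$
letting $\Gamma_\alpha(z,x)=\int^x_0\gamma_\alpha(z,s)ds$ and defining the $C^1$-functional
$$\hat{\varphi}_\alpha(u)=\int_\Omega G(Du)dz+\frac{1}{p}\int_\Omega\xi(z)|u|^pdz+\frac{1}{p}\int_{\partial\Omega}\beta(z)|u|^pd\sigma-\int_\Omega\Gamma_\alpha(z,u)dz,$$
which is exactly the functional from the proof of Proposition \ref{prop11} with $\lambda$ replaced by $\alpha$ and $\tilde{u}^*_\lambda$ replaced by $u_\lambda$.

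The argument then follows the same four steps. By Corollary \ref{cor3}, hypothesis $H_0$ and Lemmata \ref{lem5}, \ref{lem6}, $\hat{\varphi}_\alpha$ is coercive and sequentially weakly lower semicontinuous, so it attains its infimum at some $u_\alpha\in W^{1,p}(\Omega)$. Using hypothesis $H(f)(iv)$ (the concave term near $0^+$), the estimate $G(y)\leq c_{16}(|y|^q+|y|^p)$ from \eqref{eq35} and $\tau<q<p$, one shows $\hat{\varphi}_\alpha(tu)<0$ for some $u\in D_+$ and all small $t>0$, hence $\hat{\varphi}_\alpha(u_\alpha)<0=\hat{\varphi}_\alpha(0)$ and $u_\alpha\neq 0$. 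Testing $\hat{\varphi}'_\alpha(u_\alpha)=0$ with $h=-u^-_\alpha$ and using Lemma \ref{lem2}, $H_0$ and Lemmata \ref{lem5}, \ref{lem6} gives $u_\alpha\geq 0$. Finally, testing with $h=(u_\alpha-u_\lambda)^+\in W^{1,p}(\Omega)$ and using that $u_\lambda\in S^\lambda_+$ solves \eqref{eqp} with parameter $\lambda$, together with $\alpha<\lambda$ and $f(z,u_\lambda)\geq 0$, I obtain
$$\left\langle A(u_\alpha)-A(u_\lambda),(u_\alpha-u_\lambda)^+\right\rangle+\int_\Omega\xi(z)\big(u^{p-1}_\alpha-u^{p-1}_\lambda\big)(u_\alpha-u_\lambda)^+dz+\int_{\partial\Omega}\beta(z)\big(u^{p-1}_\alpha-u^{p-1}_\lambda\big)(u_\alpha-u_\lambda)^+d\sigma\leq(\alpha-\lambda)\int_\Omega f(z,u_\lambda)(u_\alpha-u_\lambda)^+dz\leq 0,$$
so the strict monotonicity of $A$ (Proposition \ref{prop4}), that of $x\mapsto x^{p-1}$, and hypothesis $H_0$ force $u_\alpha\leq u_\lambda$. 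Thus $u_\alpha\in[0,u_\lambda]$ with $u_\alpha\neq 0$, and by the definition of $\gamma_\alpha$ the function $u_\alpha$ solves \eqref{eqp} with parameter $\alpha$; the regularity theory of Lieberman \cite{23} and the maximum principle of Pucci and Serrin \cite{40} then place $u_\alpha\in D_+$, whence $\alpha\in\mathcal{L}$.

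The only step demanding genuine care is the comparison $u_\alpha\leq u_\lambda$: because the differential operator is not $(p-1)$-homogeneous one cannot manipulate $u_\alpha$ and $u_\lambda$ directly, but the inequality is obtained purely at the level of the weak formulation from the strict monotonicity of $A$ and the elementary monotonicity of the lower-order terms, so the lack of homogeneity is not an obstacle here. Everything else is a verbatim repetition of the mechanism already used for Propositions \ref{prop10} and \ref{prop11}; the remaining bookkeeping — checking that the truncated functional inherits coercivity and that $(u_\alpha-u_\lambda)^+$ is an admissible test function — is routine.
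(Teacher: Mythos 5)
Your proposal is correct and follows essentially the same route as the paper: truncate the reaction at the solution $u_\lambda$ for the larger parameter, minimize the resulting coercive functional, use $H(f)(iv)$ to rule out the zero minimizer, and test with $-u^-_\alpha$ and $(u_\alpha-u_\lambda)^+$ to locate the minimizer in $[0,u_\lambda]$. The paper's proof of Proposition \ref{prop13} is precisely this argument (with the truncation called $\mu_\alpha$ and the functional $w_\alpha$), so no further comment is needed.
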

\begin{proof}
	Since $\lambda\in\mathcal{L}$, we can find $u_\lambda\in S^\lambda_+\subseteq D_+$ (see Corollary \ref{cor12}). We introduce the Carath\'eodory function $\mu_\alpha:\Omega\times\RR\rightarrow\RR$ defined by
	\begin{equation}\label{eq47}
		\mu_\alpha(z,x)=\left\{\begin{array}{ll}
			\alpha f(z,x) & \mbox{if}\ x\leq u_\lambda(z)\\
			\alpha f(z,u_\lambda(z)) & \mbox{if}\ u_\lambda(z)<x.
		\end{array}\right.
	\end{equation}
	
	We set $M_\alpha(z,x)=\int^x_0\mu_\alpha(z,s)ds$ and consider the $C^1-$functional $w_\alpha:W^{1,p}(\Omega)\rightarrow\RR$ defined by
	$$w_\alpha(u)=\int_\Omega G(Du)dz+\frac{1}{p}\int_\Omega\xi(z)|u|^pdz+\frac{1}{p}\int_{\partial\Omega}\beta(z)|u|^pd\sigma-\int_\Omega M_\alpha(z,u)dz$$
	for all $u\in W^{1,p}(\Omega)$.
	
	Clearly, $w_\alpha(\cdot)$ is coercive (see (\ref{eq47})) and sequentially weakly lower semicontinuous. So, we can find $u_\alpha\in W^{1,p}(\Omega)$ such that
	\begin{equation}\label{eq48}
		w_\alpha(u_\alpha)=\inf[w_\alpha(u):u\in W^{1,p}(\Omega)].
	\end{equation}
	
	As before (see the proof of Proposition \ref{prop10}), using hypothesis $H(f)(iv)$, we have
	\begin{eqnarray*}
		&&w_\alpha(u_\alpha)<0=w_\alpha(0), \\
		&\Rightarrow & u_\alpha\neq 0.
	\end{eqnarray*}
	
	From (\ref{eq48}), we have
	$$w'_\alpha(u_\alpha)=0,$$
	\begin{equation}\label{eq49}
		\left\langle A(u_\alpha),h\right\rangle+\int_\Omega\xi(z)|u_\alpha|^{p-2} u_\alpha hdz + \int_{\partial\Omega}\beta(z)|u_\alpha|^{p-2}u_\alpha hd\sigma = \int_\Omega\mu_\alpha(z,u_\alpha)hdz
	\end{equation}
	for all $h\in W^{1,p}(\Omega)$.
	
	In (\ref{eq49}) we first choose $h=-u^-_\alpha\in W^{1,p}(\Omega)$. We obtain
	$$0\leq u_\alpha,\ u_\alpha\neq 0.$$
	
	Then we choose $h=(u_\alpha-u_\lambda)^+\in W^{1,p}(\Omega)$. We have
	\begin{eqnarray*}
		&&\left\langle A(u_\alpha),(u_\alpha-u_\lambda)^+\right\rangle+\int_\Omega\xi(z)u^{p-1}_\alpha(u_\alpha-u_\lambda)^+dz+\int_{\partial\Omega}\beta(z)u^{p-1}_\alpha(u_\alpha-u_\lambda)^+d\sigma \\
		&=&\int_\Omega\alpha f(z,u_\lambda)(u_\alpha-u_\lambda)^+dz\ \mbox{(see (\ref{eq47}))} \\
		&\leq & \int_\Omega\lambda f(z,u_\lambda)(u_\alpha-u_\lambda)^+dz\ \mbox{(since}\ f\geq0,\alpha\leq\lambda) \\
		&=&\langle A(u_\lambda),(u_\alpha-u_\lambda)^+\rangle +\int_{\Omega}\xi(z)u^{p-1}_\lambda(u_\alpha-u_\lambda)^+dz+\int_{\partial\Omega}\beta(z)u^{p-1}_\lambda(u_\alpha-u_\lambda)^+d\sigma\\
		&&\mbox{(since}\ u_\lambda\in S^\lambda_+)\\
		&\Rightarrow & u_\alpha\leq u_\lambda.
	\end{eqnarray*}
	
	So, we have proved that
	\begin{eqnarray*}
		&& u_\alpha\in[0,u_\lambda],\ u_\alpha\neq0, \\
		&\Rightarrow & u_\alpha\in S^\alpha_+ \subseteq D_+\ \mbox{(see (\ref{eq47})) and so}\ \alpha\in\mathcal{L}.
	\end{eqnarray*}
\end{proof}
\begin{remark}
	Proposition \ref{prop13} implies that $\mathcal{L}$ is an interval	
\end{remark}
\begin{corollary}\label{cor14}
	If hypotheses $H(a),\ H(\xi),\ H(\beta),\ H_0,\ H(f)$ hold, $\lambda\in\mathcal{L},\ \alpha\in(0,\lambda)$ and $u_\lambda\in S^\lambda_+\subseteq D_+$, then we can find $u_\alpha\in S^\alpha_+$ such that
	$$u_\lambda-u_\alpha\in {\rm int}\, C^*_+\left(\Sigma_0\right)$$
	with $\Sigma_0=\{z\in\partial\Omega:u_\lambda (z)=u_\alpha(z)\}$.
\end{corollary}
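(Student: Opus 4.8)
The plan is to combine the truncation construction from the proof of Proposition~\ref{prop13} with the strong comparison principle of Proposition~\ref{prop7}. First I would invoke the proof of Proposition~\ref{prop13}: since $\lambda\in\mathcal{L}$ and $u_\lambda\in S^\lambda_+\subseteq D_+$, truncating the reaction of \eqref{eqp} at $u_\lambda$ (the function $\mu_\alpha$) and minimizing the coercive, sequentially weakly lower semicontinuous functional $w_\alpha$ produces $u_\alpha\in S^\alpha_+\subseteq D_+$ with $0\le u_\alpha\le u_\lambda$. This ordered pair $u_\alpha\le u_\lambda$, with $u_\alpha,u_\lambda\in C^1(\overline{\Omega})\backslash\{0\}$, is the candidate to be fed into Proposition~\ref{prop7}.

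Next I would manufacture a common zeroth-order coefficient placing both solutions in the setting of Proposition~\ref{prop7}. Set $\rho=||u_\lambda||_\infty$ (so also $||u_\alpha||_\infty\le\rho$), let $\hat{\xi}^0_\rho>0$ be as in $H(f)(v)$ for this $\rho$, and put $\hat{\xi}_\rho=\lambda\hat{\xi}^0_\rho$ and $\hat{\xi}(z)=\xi(z)+\hat{\xi}_\rho$. Since $x\mapsto f(z,x)+\hat{\xi}^0_\rho x^{p-1}$ is nondecreasing on $[0,\rho]$, so is $x\mapsto\lambda f(z,x)+\hat{\xi}_\rho x^{p-1}$. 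Using $u_\lambda\in S^\lambda_+$, $u_\alpha\in S^\alpha_+$, and the nonlinear Green's identity, a.e. in $\Omega$ we have
$$-{\rm div}\,a(Du_\alpha)+\hat{\xi}(z)u_\alpha^{p-1}=\alpha f(z,u_\alpha)+\hat{\xi}_\rho u_\alpha^{p-1}=:h_1(z),$$
$$-{\rm div}\,a(Du_\lambda)+\hat{\xi}(z)u_\lambda^{p-1}=\lambda f(z,u_\lambda)+\hat{\xi}_\rho u_\lambda^{p-1}=:h_2(z),$$
with $\hat{\xi}\in L^\infty(\Omega)$, $\hat{\xi}\ge 0$, and $h_1,h_2\in L^\infty(\Omega)$ (by $H(f)(i)$ and $u_\alpha,u_\lambda\in C^1(\overline{\Omega})$).

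The heart of the argument is the strict gap $h_2-h_1\ge c_8>0$ a.e. Since $0\le u_\alpha\le u_\lambda\le\rho$ and $x\mapsto\lambda f(z,x)+\hat{\xi}_\rho x^{p-1}$ is nondecreasing on $[0,\rho]$, we get $\lambda f(z,u_\lambda)+\hat{\xi}_\rho u_\lambda^{p-1}\ge\lambda f(z,u_\alpha)+\hat{\xi}_\rho u_\alpha^{p-1}$, whence $h_2-h_1\ge(\lambda-\alpha)f(z,u_\alpha)$ a.e. As $u_\alpha\in D_+$, $m_\alpha:=\min_{\overline{\Omega}}u_\alpha>0$, and $H(f)(iv)$ (with $s=m_\alpha$) gives $f(z,u_\alpha(z))\ge\eta_{m_\alpha}>0$; hence $h_2-h_1\ge(\lambda-\alpha)\eta_{m_\alpha}=:c_8>0$ a.e. I expect this step—choosing the multiple $\hat{\xi}_\rho=\lambda\hat{\xi}^0_\rho$ so that $\lambda f+\hat{\xi}_\rho x^{p-1}$ remains monotone, and extracting the uniform positive lower bound for $f(z,u_\alpha)$ from $u_\alpha\in D_+$—to be the only delicate point; everything else is bookkeeping.

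Finally I would apply Proposition~\ref{prop7} with $u=u_\alpha$, $v=u_\lambda$, and $\hat{\xi},h_1,h_2$ as above. It yields $(u_\lambda-u_\alpha)(z)>0$ for all $z\in\Omega$ and $\left.\frac{\partial(u_\lambda-u_\alpha)}{\partial n}\right|_{\Sigma_0}<0$, where $\Sigma_0=\{z\in\partial\Omega:u_\lambda(z)=u_\alpha(z)\}$. Since $u_\lambda-u_\alpha$ vanishes on $\Sigma_0$, it belongs to $C^1_*(\overline{\Omega})$, is nonnegative, and the two displayed inequalities are exactly the conditions defining ${\rm int}\,C^*_+(\Sigma_0)$ in the Remark following Proposition~\ref{prop7}. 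Therefore $u_\lambda-u_\alpha\in{\rm int}\,C^*_+(\Sigma_0)$ (and $u_\lambda-u_\alpha\in D_+$ when $\Sigma_0=\emptyset$), which is the assertion.
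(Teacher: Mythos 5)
Your proposal is correct and follows essentially the same route as the paper: take $u_\alpha\in S^\alpha_+$ with $0\le u_\alpha\le u_\lambda$ from the proof of Proposition~\ref{prop13}, use $H(f)(v)$ to build a common zeroth-order coefficient, use $H(f)(iv)$ together with $D_+$-membership to get a uniform positive gap $h_2-h_1\ge c_8>0$, and conclude via Proposition~\ref{prop7}. The only (immaterial) differences are that the paper scales $\hat{\xi}_\rho$ by $\alpha$ rather than $\lambda$ and evaluates the lower bound $\eta_s$ at $s=\min_{\overline{\Omega}}u_\lambda$ instead of $\min_{\overline{\Omega}}u_\alpha$.
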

\begin{proof}
	From the proof of Proposition \ref{prop13}, we know that we can find $u_\alpha\in S^{\alpha}_+$ such that
	$$u_\lambda-u_\alpha\in C_+\backslash\{0\}.$$
	
	Let $\rho=||u_\lambda||_\infty$ and let $\hat{\xi}_\rho>0$ be as postulated by hypothesis $H(f)(v)$. Then we have
	\begin{eqnarray*}
		&&-{\rm div}\,a(Du_\alpha)+(\xi(z)+\alpha\hat{\xi}_\rho)u^{p-1}_\alpha \\
		&=&\alpha f(z,u_\alpha)+\alpha\hat{\xi}_\rho u^{p-1}_\alpha \\
		&\leq &\alpha f(z,u_\lambda)+\alpha\hat{\xi}_\rho u^{p-1}_\lambda\ \mbox{(see hypothesis H(f)(v) and recall that}\ u_\alpha\leq u_\lambda) \\
		&=&\lambda f(z,u_\lambda)-(\lambda-\alpha)f(z,u_\lambda)+\alpha\hat{\xi}_\rho u^{p-1}_\lambda \\
		&\leq &\lambda f(z,u_\lambda)-(\lambda-\alpha)\eta_s+\alpha\hat{\xi}_\rho u^{p-1}_\lambda\ \mbox{with}\ s=\min\limits_{\overline{\Omega}}u_\lambda>0\\
		&&\mbox{(see hypothesis H(f)(iv) and recall that\ $u_\lambda\in D_+$)} \\
		&<&-{\rm div}\,a(Du_\lambda)+\alpha\hat{\xi}_\rho u^{p-1}_\lambda\ \mbox{for almost all}\ z\in\Omega, \\
		&\Rightarrow & u_\lambda-u_\alpha\in {\rm int}\, C^*_+\left(\Sigma_0\right)\ \mbox{with}\ \Sigma_0=\{z\in\partial\Omega:u_\lambda(z)=u_\alpha(z)\}\ \mbox{(see Proposition \ref{prop7})}.
	\end{eqnarray*}
The proof is now complete.
\end{proof}

Now let $\lambda^*=\sup\mathcal{L}$.
\begin{prop}\label{prop15}
	If hypotheses $H(a),H(\xi),H(\beta),H_0,H(f)$ hold, then $\lambda^*<+\infty$.
\end{prop}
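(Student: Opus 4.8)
The plan is to extract from the reaction hypotheses a clean pointwise lower bound of the form $f(z,x)\geq\tilde\mu\,x^{p-1}$ for a.a. $z\in\Omega$ and all $x\geq 0$, with a constant $\tilde\mu>0$ depending only on the data, and then, for an arbitrary admissible $\lambda$ and an arbitrary $u\in S^\lambda_+$, to test the weak formulation of \eqref{eqp} with the function $h=u^{1-p}$. This test function is legitimate precisely because Corollary \ref{cor12} forces $u\in D_+$, so $u$ is bounded away from $0$ on $\overline{\Omega}$ and $u^{1-p}\in C^1(\overline{\Omega})\subseteq W^{1,p}(\Omega)$.

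First I would build $\tilde\mu$. On $(0,\delta_0]$ hypothesis $H(f)(iv)$ gives $f(z,x)\geq\hat\eta_0 x^{\tau-1}=\hat\eta_0 x^{\tau-p}x^{p-1}\geq\hat\eta_0\delta_0^{\tau-p}x^{p-1}$, since $\tau<p$ makes $x\mapsto x^{\tau-p}$ decreasing. The remark following $H(f)$ records that $H(f)(ii),(iii)$ imply $f(z,x)/x^{p-1}\to+\infty$ uniformly in $z$, so there is $M>\delta_0$, independent of $z$, with $f(z,x)\geq x^{p-1}$ for all $x\geq M$. On the compact interval $[\delta_0,M]$, $H(f)(iv)$ gives $f(z,x)\geq\eta_{\delta_0}>0$ while $x^{p-1}\leq M^{p-1}$, so $f(z,x)\geq(\eta_{\delta_0}M^{1-p})x^{p-1}$ there. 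Since also $f(z,0)=0$, the choice $\tilde\mu=\min\{1,\,\hat\eta_0\delta_0^{\tau-p},\,\eta_{\delta_0}M^{1-p}\}>0$ yields the bound for all $x\geq 0$, and $\tilde\mu$ is manifestly independent of $\lambda$.

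Next, fix $\lambda\in\mathcal{L}$ and $u=u_\lambda\in S^\lambda_+\subseteq D_+$. Inserting $h=u^{1-p}$ into $\langle A(u),h\rangle+\int_\Omega\xi(z)u^{p-1}h\,dz+\int_{\partial\Omega}\beta(z)u^{p-1}h\,d\sigma=\lambda\int_\Omega f(z,u)h\,dz$, the potential and boundary integrals collapse to $\int_\Omega\xi(z)\,dz$ and $\int_{\partial\Omega}\beta(z)\,d\sigma$ respectively; the right-hand side is $\geq\lambda\tilde\mu|\Omega|_N$ by the pointwise bound; and $\langle A(u),u^{1-p}\rangle=(1-p)\int_\Omega u^{-p}(a(Du),Du)_{\RR^N}\,dz\leq 0$, because $p>1$ and $(a(y),y)_{\RR^N}\geq 0$ by Lemma \ref{lem2}(c). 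Rearranging gives $\lambda\tilde\mu|\Omega|_N\leq\int_\Omega\xi(z)\,dz+\int_{\partial\Omega}\beta(z)\,d\sigma$, i.e. $\lambda\leq\big(\int_\Omega\xi\,dz+\int_{\partial\Omega}\beta\,d\sigma\big)\big/\big(\tilde\mu|\Omega|_N\big)$. The right-hand side is a finite positive constant ($\xi\in L^\infty(\Omega)$, $\beta\in C^{0,\alpha}(\partial\Omega)$, $\tilde\mu>0$, $|\Omega|_N>0$, and the numerator is $>0$ by $H_0$), and $\lambda\in\mathcal{L}$ was arbitrary, so $\lambda^*=\sup\mathcal{L}<+\infty$.

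The only genuinely delicate points are the admissibility of the test function $h=u^{1-p}$ — which rests entirely on the regularity statement $S^\lambda_+\subseteq D_+$ of Corollary \ref{cor12} — and the care needed to check that $\tilde\mu$ can be chosen independently of $\lambda$ (here the uniformity in $z$ of the superlinearity in the remark after $H(f)$ is what matters). Once these are in place, the superlinearity delivers the conclusion with essentially no further computation.
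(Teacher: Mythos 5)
Your proof is correct, and it takes a genuinely different route from the paper's. Both arguments begin from the same pointwise information: you package it as $f(z,x)\geq\tilde\mu\,x^{p-1}$ for all $x\geq 0$ with $\tilde\mu>0$ independent of $\lambda$ (your three-regime construction using $H(f)(iv)$ near $0$, $H(f)(iv)$ on the compact middle range, and the uniform superlinearity recorded in the remark after $H(f)$ for large $x$ is exactly the content of the paper's inequality (50), which asserts $\overline{\lambda}f(z,x)-\xi(z)x^{p-1}\geq x^{p-1}$ for some large $\overline{\lambda}$). Where you diverge is in how this is exploited. The paper fixes $\lambda>\overline{\lambda}$ in $\mathcal{L}$, looks at $m_\lambda=\min_{\overline{\Omega}}u_\lambda>0$, and runs a comparison argument between the perturbed constant $m_\lambda+\delta$ and $u_\lambda$, invoking the strong comparison result (Proposition \ref{prop7}) and splitting into the cases $\beta=0$ and $\beta\neq 0$ to contradict the definition of $m_\lambda$. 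You instead test the weak formulation with $h=u^{1-p}$ (legitimate since Corollary \ref{cor12} gives $u\in D_+$, so $u^{1-p}\in C^1(\overline{\Omega})$), observe that $\langle A(u),u^{1-p}\rangle=(1-p)\int_\Omega u^{-p}(a(Du),Du)_{\RR^N}dz\leq 0$ by Lemma \ref{lem2}(c), and read off $\lambda\,\tilde\mu\,|\Omega|_N\leq\int_\Omega\xi\,dz+\int_{\partial\Omega}\beta\,d\sigma$. Your version is shorter, bypasses the strong maximum/comparison machinery and the case distinction on $\beta$, and yields an explicit upper bound for $\lambda^*$ in terms of the data; the paper's version is the one that generalizes to settings where a clean sign identity for $\langle A(u),u^{1-p}\rangle$ is unavailable, but here your integral identity does the job with no loss. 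The only point to flag is that both your argument and the paper's rest on the uniformity in $z$ of the limit $f(z,x)/x^{p-1}\to+\infty$, which the paper asserts in the remark following $H(f)$; you correctly identify this as the load-bearing step for choosing $M$ independent of $z$.
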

\begin{proof}
	Hypotheses $H(f)(i),(iv)$ and $H(\xi)$ imply that we can find $\overline{\lambda}>0$ big such that
	\begin{equation}\label{eq50}
		\overline{\lambda}f(z,x)-\xi(z)x^{p-1}\geq x^{p-1}\ \mbox{for almost all}\ z\in\Omega, \mbox{all}\ x\geq0.
	\end{equation}
	
	Let $\lambda>\overline{\lambda}$ and suppose that $\lambda\in\mathcal{L}$. Then we can find $u_\lambda\in S^\lambda_+\subseteq D_+$. So, we have
	$$m_\lambda=\min\limits_{\overline{\Omega}}u_\lambda>0.$$
	
	For $\delta>0$, we set $m^\delta_\lambda=m_\lambda+\delta\in D_+$. Also for $\rho=||u_\lambda||_\infty$ let $\hat{\xi}_{\rho}>0$ be as postulated by hypothesis $H(f)(v)$. Then
\begin{eqnarray}\label{eq51}
	&		&-{\rm div}\, a(Dm^\delta_\lambda)+(\xi(z)+\lambda\hat{\xi}_\rho)(m^\delta_\lambda)^{p-1} \nonumber \\
	&\leq	&(\xi(z)+\lambda\hat{\xi}_{\rho})m^{p-1}_\lambda + \chi(\delta)\ \mbox{with}\ \chi(\delta)\rightarrow 0^+\ \mbox{as}\ \delta\rightarrow0^+ \nonumber \\
	&\leq	&\xi(z)m^{p-1}_\lambda+(1+\lambda\hat{\xi}_\rho)m^{p-1}_\lambda+\chi(\delta) \nonumber \\
	&\leq	&\overline{\lambda}f(z,m_\lambda)+\lambda\hat{\xi}_\rho m^{p-1}_\lambda+\chi(\delta )\  \mbox{(see (\ref{eq50}))} \nonumber \\
	&<		&\lambda f(z,m_\lambda)-(\lambda-\overline{\lambda})f(z,u_\lambda)+\lambda\hat{\xi}_\rho m^{p-1}_\lambda+\chi(\delta)\nonumber \\
	&	& (\mbox{since}\ \lambda>\overline{\lambda}, \mbox{see hypothesis H(f)(iv)}) \nonumber \\
	&\leq	&\lambda f(z,m_\lambda)+\lambda\hat{\xi}_\rho m^{p-1}_\lambda-(\lambda-\overline{\lambda})\eta_s+\chi(\delta)\ \mbox{with}\ s=m_\lambda > 0\nonumber \\
	&	& \mbox{(see hypothesis H(f)(iv))} \nonumber \\
	&\leq & \lambda f(z,m_\lambda)+\lambda\hat{\xi}_\rho m_\lambda(\vartheta)-\vartheta\ \mbox{for some}\ \vartheta>0\ \mbox{and all}\ \delta>0\ \mbox{small} \nonumber \\
	&\leq & \lambda f(z,u_\lambda)+\lambda\hat{\xi}_\rho u_\lambda-\vartheta\ \mbox{(see hypothesis}\ H(f)(v)) \nonumber \\
	&< &\lambda f(z,u_\lambda)+\lambda\hat{\xi}_\rho u_\lambda \nonumber \\
	&= &-{\rm div}\,a(Du_\lambda)+(\xi(z)+\lambda\hat{\xi}_\rho)u^{p-1}_\lambda\ \mbox{for almost all}\ z\in\Omega\ \mbox{(recall that}\ u_\lambda\in S^\lambda_+).
\end{eqnarray}

	If $\beta=0$ (Neumann problem), then by acting on (\ref{eq51}) with $(m^\delta_\lambda-u_\lambda)^+\in W^{1,p}(\Omega)$ we obtain
	$$m^\delta_\lambda\leq u_\lambda\ \mbox{for}\ \delta>0\ \mbox{small},$$
	a contradiction to the definition of $m_\lambda$.
	
	If $\beta\neq 0$, then from the boundary condition we infer that $\Sigma_0=\{z\in\partial\Omega:u_\lambda(z)=m_\lambda\}\neq\partial\Omega$. Then from (\ref{eq51}) and Proposition \ref{prop7} we have
	$$u_\lambda-m_\lambda\in {\rm int}\, C^*_+\left(\Sigma_0\right)\,,$$
	which again contradicts the definition of $m_\lambda$.
	
	So, it follows that $\lambda\notin\mathcal{L}$ and we have $\lambda^*=\sup\mathcal{L}\leq\overline{\lambda}<\infty$.
\end{proof}

In what follows, for every $\lambda>0,\ \varphi_\lambda:W^{1,p}(\Omega)\rightarrow\RR$ is the $C^1-$energy (Euler) functional for problem (\ref{eqp}) defined by
$$\varphi_\lambda(u)=\int_\Omega G(Du)dz+\frac{1}{p}\int_\Omega\xi(z)|u|^pdz+\frac{1}{p}\int_{\partial\Omega}\beta(z)|u|^pd\sigma-\lambda\int_\Omega F(z,u)dz$$
for all $u\in W^{1,p}(\Omega)$.
\begin{prop}\label{prop16}
	If hypotheses $H(a),\,H(\xi),\,H(\beta),\,H_0,\,H(f)$ hold, then $\lambda^*\in\mathcal{L}$.
\end{prop}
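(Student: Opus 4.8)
The aim is to realize $\lambda^*$ as an admissible parameter by passing to the limit along solutions at parameters below $\lambda^*$. Pick $\lambda_n\in\mathcal{L}$ with $\lambda_n\uparrow\lambda^*$ (possible since $\mathcal{L}\supseteq(0,\lambda^*)$ by Proposition~\ref{prop13}). For each $n$, using the truncation-and-minimization scheme from the proof of Proposition~\ref{prop13} (truncate $\lambda_n f(z,\cdot)$ at an element of $S^{\lambda}_+$ with $\lambda\in(\lambda_n,\lambda^*)$ and minimize the resulting coercive, sequentially weakly lower semicontinuous functional), I would select a positive solution $u_n\in S^{\lambda_n}_+\subseteq D_+$ (Corollary~\ref{cor12}) with $\varphi_{\lambda_n}(u_n)<0$. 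Since $\varphi_{\lambda_n}'(u_n)=0$, testing with $u_n$ and subtracting yields
\[
p\,\varphi_{\lambda_n}(u_n)=\int_\Omega\bigl[pG(Du_n)-(a(Du_n),Du_n)_{\RR^N}\bigr]dz+\lambda_n\int_\Omega e(z,u_n)\,dz<0,
\]
so the inequality $pG_0(t)-a_0(t)t^2\ge-\bar c_0$ of $H(a)(iv)$ together with $\lambda_n\ge\lambda_1>0$ gives a uniform bound $\int_\Omega e(z,u_n)\,dz\le c_{21}$ for some $c_{21}>0$.

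\textbf{The a priori bound.} The heart of the matter — and the step I expect to be the main obstacle — is showing that $\{u_n\}_{n\ge1}$ is bounded in $W^{1,p}(\Omega)$. I would argue by contradiction, assuming $||u_n||\to\infty$ and setting $y_n=u_n/||u_n||$, so that along a subsequence $y_n\stackrel{w}{\rightarrow}y\ge0$ in $W^{1,p}(\Omega)$, $y_n\to y$ in $L^r(\Omega)$, in $L^p(\Omega)$, in $L^p(\partial\Omega)$ and pointwise a.e. If $y\ne0$, then $u_n(z)\to+\infty$ for a.a.\ $z$ in the positive-measure set $\{y>0\}$; since $H(f)(iii)$ applied with $x=0$ gives $e(z,x)\ge-d(z)$ for all $x\ge0$, while $H(f)(ii)$ together with the quasimonotonicity $H(f)(iii)$ forces $e(z,x)\to+\infty$ as $x\to+\infty$ for a.a.\ $z\in\Omega$, Fatou's lemma yields $\int_\Omega e(z,u_n)\,dz\to+\infty$, contradicting the bound just obtained. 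If $y=0$, then the compact embedding $W^{1,p}(\Omega)\hookrightarrow L^p(\Omega)$ forces $||y_n||_p\to0$, hence $||Dy_n||_p\to1$; evaluating $\varphi_{\lambda_n}$ at $sy_n$ for fixed $s>0$ and letting $n\to\infty$ (the $\xi$-, $\beta$- and $F$-terms tend to $0$ because $sy_n\to0$ in $L^p(\partial\Omega)$ and in $L^r(\Omega)$, while Corollary~\ref{cor3} gives $\int_\Omega G(sDy_n)\ge\frac{c_1}{p(p-1)}s^p||Dy_n||^p_p$) shows $\liminf_n\varphi_{\lambda_n}(sy_n)\ge\frac{c_1}{p(p-1)}s^p$, which is arbitrarily large. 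Choosing $t_n\in[0,1]$ with $\varphi_{\lambda_n}(t_nu_n)=\max_{t\in[0,1]}\varphi_{\lambda_n}(tu_n)$ and using $sy_n=(s/||u_n||)u_n$ with $s/||u_n||\in(0,1)$ for $n$ large, this forces $\varphi_{\lambda_n}(t_nu_n)\to+\infty$; as $\varphi_{\lambda_n}(0)=0$ and $\varphi_{\lambda_n}(u_n)<0$, the maximum is interior, so $\langle\varphi_{\lambda_n}'(t_nu_n),t_nu_n\rangle=0$, and subtracting gives
\[
p\,\varphi_{\lambda_n}(t_nu_n)=\int_\Omega\bigl[pG(t_nDu_n)-(a(t_nDu_n),t_nDu_n)_{\RR^N}\bigr]dz+\lambda_n\int_\Omega e(z,t_nu_n)\,dz.
\]
Here $H(f)(iii)$ applied to $t_nu_n\le u_n$ gives $\int_\Omega e(z,t_nu_n)\,dz\le\int_\Omega e(z,u_n)\,dz+||d||_1\le c_{21}+||d||_1$, and together with $H(a)(iv)$ this bounds the right-hand side above, contradicting $\varphi_{\lambda_n}(t_nu_n)\to+\infty$. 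Hence $\{u_n\}_{n\ge1}\subseteq W^{1,p}(\Omega)$ is bounded.

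\textbf{Passage to the limit.} With $\{u_n\}$ bounded I may assume $u_n\stackrel{w}{\rightarrow}u^*$ in $W^{1,p}(\Omega)$, $u_n\to u^*$ in $L^r(\Omega)$ and in $L^p(\partial\Omega)$. Testing $\varphi_{\lambda_n}'(u_n)=0$ with $h=u_n-u^*$ and passing to the limit gives $\limsup_n\langle A(u_n),u_n-u^*\rangle\le0$, so the $(S)_+$-property of $A$ (Proposition~\ref{prop4}) yields $u_n\to u^*$ in $W^{1,p}(\Omega)$; in particular $u^*\ge0$. Letting $n\to\infty$ in the weak formulation of~\eqref{eqp} for $\lambda=\lambda_n$ — using $\lambda_n\to\lambda^*$ and the growth bound $H(f)(i)$, which makes $f(\cdot,u_n)\to f(\cdot,u^*)$ in $L^{r'}(\Omega)$ ($\frac{1}{r}+\frac{1}{r'}=1$) — shows that $u^*$ is a nonnegative weak solution of~\eqref{eqp} with $\lambda=\lambda^*$. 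To see that $u^*\ne0$ I would invoke the lower estimate $f(z,x)\ge\hat\eta_0x^{\tau-1}$ near $0$ from $H(f)(iv)$: as in~\eqref{eq44}--\eqref{eq45}, after a suitable normalization of the construction the solutions $u_n$ satisfy $\varphi_{\lambda_n}(u_n)\le-c_{22}<0$ with $c_{22}>0$ independent of $n$ (the hierarchy $\tau<q<p$ makes the concave contribution dominate), whence $\varphi_{\lambda^*}(u^*)\le-c_{22}<0=\varphi_{\lambda^*}(0)$ and so $u^*\ne0$. Finally, the nonlinear regularity theory (Hu--Papageorgiou~\cite{21} and Papageorgiou--R\u{a}dulescu~\cite{36} for the boundedness, then Lieberman~\cite{23}) gives $u^*\in C_+\backslash\{0\}$, and the nonlinear maximum principle of Pucci--Serrin~\cite[pp.~111, 120]{40} (after absorbing $\hat\xi_\rho(u^*)^{p-1}$ with $\rho=||u^*||_\infty$, exactly as in the proof of Proposition~\ref{prop11}) gives $u^*\in D_+$. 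Therefore $u^*\in S^{\lambda^*}_+$, that is, $\lambda^*\in\mathcal{L}$.
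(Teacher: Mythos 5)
Your overall route is the paper's: take $\lambda_n\uparrow\lambda^*$, select $u_n\in S^{\lambda_n}_+$ with $\varphi_{\lambda_n}(u_n)<0$, prove an a priori bound by contradiction (splitting into $y\neq0$ and $y=0$ for $y_n=u_n/\|u_n\|$), pass to the limit via the $(S)_+$-property, and rule out $u^*=0$. Your case $y\neq 0$ is a legitimate variant of the paper's (the paper compares $\int_\Omega F(z,u_n)/\|u_n\|^p\,dz\to+\infty$ with an upper bound coming from \eqref{eq61}, whereas you send $\int_\Omega e(z,u_n)\,dz\to+\infty$ directly; the fact that $H(f)(ii)$--$(iii)$ force $e(z,x)\to+\infty$ as $x\to+\infty$ is true but should be justified, e.g.\ via $\frac{d}{dx}\bigl(F(z,x)x^{-p}\bigr)=e(z,x)x^{-p-1}$). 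Your treatment of $u^*\neq0$ through a uniform negative energy level is also an acceptable alternative to the paper's lower barrier $\overline{u}_{\lambda_1}\le u_n$.

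There is, however, a genuine gap in your case $y=0$. You run the fibering argument on the full functional $\varphi_{\lambda_n}$ and claim that $H(a)(iv)$ bounds
$$\int_\Omega\bigl[pG(t_nDu_n)-(a(t_nDu_n),t_nDu_n)_{\RR^N}\bigr]dz$$
from above. It does not: $H(a)(iv)$ gives $pG_0(t)-a_0(t)t^2\ge-\bar{c}_0$, which is a \emph{lower} bound, and no upper bound on this quantity is assumed (the best generic upper estimate, $pG(y)-(a(y),y)_{\RR^N}\le(p-1)G(y)\le c(1+|y|^p)$, grows like $\|t_nDu_n\|_p^p$). Concretely, for the $(p,q)$-Laplacian $a(y)=|y|^{p-2}y+|y|^{q-2}y$ one computes $pG_0(t)-a_0(t)t^2=(\tfrac{p}{q}-1)t^q$, so the term above equals $(\tfrac{p}{q}-1)\|D(t_nu_n)\|_q^q$, which may tend to $+\infty$ together with $\varphi_{\lambda_n}(t_nu_n)$; both sides of your identity then blow up and no contradiction is obtained. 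This is exactly why the paper replaces $\varphi_{\lambda_n}$ by the auxiliary functional $\tilde{\psi}_{\lambda_n}$, whose gradient part $\frac{c_1}{p(p-1)}\|Du\|_p^p$ is exactly $p$-homogeneous, so that the Pohozaev-type term vanishes identically and $p\tilde{\psi}_{\lambda_n}(t_nu_n)=\lambda_n\int_\Omega e(z,t_nu_n)\,dz$ is controlled by the quasimonotonicity of $e$. Note that Corollary \ref{cor3} gives $\tilde{\psi}_{\lambda_n}\le\varphi_{\lambda_n}$, so $\tilde{\psi}_{\lambda_n}(u_n)<0$ is preserved and your lower estimate $\liminf_n$ of the functional along $sy_n$ goes through for $\tilde{\psi}_{\lambda_n}$ as well; with this substitution your argument closes.
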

\begin{proof}
	Let $\{\lambda_n\}_{n\geq1}\subseteq\mathcal{L}$ be an increasing sequence such that $\lambda_n\rightarrow\lambda^-$. We can find $u_n\in S^{\lambda_n}_+$ $(n\in\NN)$ such that
	\begin{equation}\label{eq52}
		\varphi_{\lambda_n}(u_n)<0\ \mbox{for all}\ n\in\NN
	\end{equation}
	(see the proof of Proposition \ref{prop13}).
	
	Also, we have
	\begin{equation}\label{eq53}
		\left\langle A(u_n),h\right\rangle + \int_\Omega\xi(z)u^{p-1}_nhdz+\int_{\partial\Omega}\beta(z)u^{p-1}_nhd\sigma =\lambda_n\int_\Omega f(z,u_n)hdz
	\end{equation}
	for all $h\in W^{1,p}(\Omega)$, all $n\in\NN$.
	\begin{claim}\label{c5}
		$\{u_n\}_{n\geq1}\subseteq W^{1,p}(\Omega)$ is bounded.	
	\end{claim}
	
	Arguing by contradiction, suppose that the claim is not true. Then we may assume that
	$$||u_n||\rightarrow+\infty\,.$$

	From (\ref{eq52}) we have
	\begin{equation}\label{eq54}
		\int_\Omega pG(Du_n)dz+\int_\Omega\xi(z)u^p_ndz+\int_{\partial\Omega}\beta(z)u_n^pd\sigma-\lambda_n\int_\Omega pF(z,u_n)dz<0\ \mbox{for all}\ n\in\NN\,.
	\end{equation}
	
	On the other hand, if in (\ref{eq53}) we choose $h=u_n\in W^{1,p}(\Omega)$, then
	\begin{equation}\label{eq55}
		-\int_\Omega(a(Du_n),Du_n)_{\RR^N}dz-\int_\Omega\xi(z)u^p_ndz-\int_{\partial\Omega}\beta(z)u^p_nd\sigma+\lambda_n\int_\Omega f(z,u_n)u_ndz=0\,.
	\end{equation}	
	
	We add (\ref{eq54}), (\ref{eq55}) and obtain
	\begin{eqnarray}\label{eq56}
		&&\int_\Omega[pG(Du_n)-(a(Du_n),Du_n)_{\RR^N}]dz+\lambda_n\int_\Omega e(z,u_n)dz<0\ \mbox{for all}\ n\in\NN \nonumber \\
		&\Rightarrow &\lambda_n\int_\Omega e(z,u_n)dz\leq c_{21}\ \mbox{for some}\ c_{21}>0,\ \mbox{all}\ n\in\NN.
	\end{eqnarray}
	
	Let $y_n=\frac{u_n}{||u_n||},\ n\in\NN$. Then
	$$||y_n||=1,\ y_n\ge\ 0\ \mbox{for all}\ n\in\NN.$$
	
	So, we may assume that
	\begin{equation}\label{eq57}
		y_n\stackrel{w}{\rightarrow}y\ \mbox{in}\ W^{1,p}(\Omega)\ \mbox{and}\ y_n\rightarrow y\ \mbox{in}\ L^r(\Omega)\ \mbox{and in}\ L^p(\partial\Omega),\ y\geq0.
	\end{equation}
	
	First assume that $y\neq0$ and let $E=\{z\in\Omega:y(z)\neq0\}$. We have $|E|_N>0$ and so
	$$u_n(z)\rightarrow+\infty\ \mbox{for almost all}\ z\in E.$$
	
	Hypothesis $H(f)(ii)$ implies that
	\begin{equation}\label{eq58}
		\frac{F(z,u_n)}{||u_n||^p}=\frac{F(z,u_n)}{u^p_n}y^p_n\rightarrow+\infty\ \mbox{for almost all}\ z\in E.
	\end{equation}
	
	From (\ref{eq58}) and Fatou's lemma (hypothesis $H(f)(ii)$ permits its use), we have
	\begin{equation}\label{eq59}
		\frac{1}{||u_n||^p}\int_E F(z,u_n)dz\rightarrow+\infty .
	\end{equation}
	
	Then
	\begin{eqnarray}\label{eq60}
		\int_\Omega\frac{F(z,u_n)}{||u_n||^p}dz &=& \int_E\frac{F(z,u_n)}{||u_n||^p}dz+\int_{\Omega\backslash E}\frac{F(z,u_n)}{||u_n||^p}dz \nonumber \\
		&\geq &\int_E\frac{F(z,u_n)}{||u_n||^p}dz\ \mbox{for almost all}\ n\in\NN\ \mbox{(since}\ F\geq0) \nonumber \\
	\Rightarrow\int_\Omega\frac{F(z,u_n)}{||u_n||^p}dz\rightarrow+\infty\ &\mbox{as}&\ n\rightarrow\infty\ \mbox{(see (\ref{eq59}))}.
	\end{eqnarray}
	
	Hypothesis $H(f)(iii)$ implies that
	\begin{eqnarray}\label{eq61}
		&&0\leq e(z,x)+d(z)\ \mbox{for almost all}\ z\in \Omega,\ \mbox{all}\ x\geq0, \nonumber \\
		&\Rightarrow &pF(z,x)-d(z)\leq f(z,x)x\ \mbox{for almost all}\ z\in\Omega,\ \mbox{all}\ x\geq 0.
	\end{eqnarray}
	
	From (\ref{eq55}), (\ref{eq61}) and hypothesis $H(a)(iv)$, we have
	\begin{eqnarray}\label{eq62}
		\lambda_n\int_\Omega pF(z,u_n)dz & \leq & \int_\Omega pG(Du_n)dz+\int_\Omega\xi(z)u^p_ndz+\int_{\partial\Omega}\beta(z)u^p_nd\sigma+c_{22}\nonumber \\
		& & \mbox{for some}\ c_{22}>0,\ \mbox{all}\ n\in\NN, \nonumber \\
		\Rightarrow\lambda_n\int_\Omega\frac{pF(zu_n)}{||u_n||^p}dz &\leq & \int_\Omega\frac{pG(Du_n)}{||u_n||^p}dz+\int_\Omega\xi(z)y^p_ndz+\int_{\partial\Omega}\beta(z)y^p_nd\sigma+\frac{c_{22}}{||u_n||^p}\nonumber \\
		&\leq & pc_5\left(\frac{1}{||u_n||^p}+||Dy_n||^p_p\right)+\int_\Omega\xi(z)y^p_ndz+\int_{\partial\Omega}\beta(z)y^p_nd\sigma\nonumber \\
		& & +\frac{c_{22}}{||u_n||^p} \nonumber \\
		&\leq & c_{23}\ \mbox{for some}\ c_{23}>0,\ \mbox{all}\ n\in\NN.
	\end{eqnarray}
	
	Comparing (\ref{eq60}) and (\ref{eq62}), we have a contradiction.
	
	Next assume that $y=0$. For $\mu>0$, we set
	$$v_n=(p\mu)^{^1/_p}y_n\in W^{1,p}(\Omega)\ \mbox{for all}\ n\in \NN.$$
	
	Note that
	\begin{eqnarray}
		&&v_n\rightarrow0\ \mbox{in}\ L^r(\Omega)\ \mbox{(see (\ref{eq57}) and recall that}\ y=0),\label{eq63} \\
		&\Rightarrow &\int_\Omega F(z,v_n)dz\rightarrow 0\ \mbox{(see hypothesis H(f)(i))}.\label{eq64}
	\end{eqnarray}
	
	Since $||u_n||\rightarrow\infty$, we can find $n_0\in\NN$ such that
	\begin{equation}\label{eq65}
		(p\mu)^{^1/_p}\frac{1}{||u_n||}\leq1\ \mbox{for all}\ n\geq n_0.
	\end{equation}
	
	Consider the $C^1-$functional $\tilde{\psi}_{\lambda_n}:W^{1,p}(\Omega)\rightarrow\RR$ defined by
	\begin{equation*}
		\tilde{\psi}_{\lambda_n}(u)=\frac{c_1}{p(p-1)}||Du||^p_p+\frac{1}{p}\int_\Omega\xi(z)|u|^pdz+\frac{1}{p}\int_{\partial\Omega}\beta(z)|u|^pd\sigma-\lambda_n\int_\Omega F(z,u)dz
	\end{equation*}
	for all $u\in W^{1,p}(\Omega).$
	
	Let $t_n \in[0,1]$ be such that
	\begin{equation}\label{eq66}
		\tilde{\psi}_{\lambda_n}(t_n u_n)=\max\left[\tilde{\psi}_{\lambda_n}(tu_n):0\leq t\leq1\right].
	\end{equation}
	
	From (\ref{eq65}) and (\ref{eq66}) it follows that
	\begin{eqnarray}\label{eq67}
		\tilde{\psi}_{\lambda_n}(t_nu_n) & \geq & \tilde{\psi}_{\lambda_n}(v_n) \nonumber \\
		&=&\mu\left[\frac{c_1}{p-1}||Dy_n||^p_p+\int_\Omega\xi(z)y^p_ndz+
\int_{\partial\Omega}\beta(z)y^p_nd\sigma\right]-\lambda_n\int_\Omega F(z,v_n)dz \nonumber \\
		&\geq &\mu c_{24}-\lambda^*\int_\Omega F(z,v_n)dz\ \mbox{for some}\ c_{24}>0,\ \mbox{all}\ n\in\NN \nonumber \\
		&&\mbox{(see hypothesis $H_0$, Lemmata \ref{lem5}, \ref{lem6} and recall that}\ F\geq0,\lambda_n\leq\lambda^*) \nonumber \\
		&\geq &\mu\frac{c_{24}}{2}>0\ \mbox{for all}\ n\geq n_1 \geq n_0\ \mbox{(see (\ref{eq64}))}.
	\end{eqnarray}
	
	But $\mu>0$ is arbitrary. So, from (\ref{eq67}) we infer that
	\begin{equation}\label{eq68}
		\tilde{\psi}_{\lambda_n}(t_nu_n)\rightarrow+\infty\ \mbox{as}\ n\rightarrow+\infty\,.
	\end{equation}
	
	Note that
	\begin{equation}\label{eq69}
		\tilde{\psi}_{\lambda_n}(0)=0\ \mbox{and}\ \tilde{\psi}_{\lambda_n}(u_n)<0\ \mbox{for all}\ n\in\NN
	\end{equation}
	(see (\ref{eq52}) and by Corollary \ref{cor3}, $\tilde{\psi}_{\lambda_n}\leq\varphi_{\lambda_n}$ for all $n\in\NN$).
	
	Then from (\ref{eq68}) and (\ref{eq69}) it follows that
	\begin{equation}\label{eq70}
		t_n\in(0,1)\ \mbox{for all}\ n\geq n_2.
	\end{equation}
	
	So, from (\ref{eq66}) and (\ref{eq70}) we have
	\begin{eqnarray}\label{eq71}
&\frac{d}{dt}\tilde{\psi}_{\lambda_n}(tu_n)|_{t=t_n}=0\ \mbox{for all}\ n\geq n_2,\nonumber\\
		\Rightarrow & \langle \tilde{\psi}'_{\lambda_n}(t_nu_n),t_nu_n\rangle  = 0\ \mbox{for all}\ n\geq n_2\ \mbox{(by the chain rule)} \nonumber \\
		\Rightarrow & \frac{c_1}{p-1}||D(t_nu_n)||^p_p  +\int_\Omega\xi(z)(t_nu_n)^pdz+\int_{\partial\Omega}\beta(z)(t_nu_n)^pd\sigma=\nonumber\\
		&\lambda_n\int_\Omega f(z,t_nu_n)(t_nu_n)dz\ \mbox{for all}\ n\geq n_2, \nonumber \\
		\Rightarrow & p\tilde{\psi}_{\lambda_n}(t_nu_n)  +\lambda_n\int_\Omega pF(z,t_nu_n)dz=\lambda_n\int_\Omega f(z,t_nu_n)(t_nu_n)dz \  \mbox{for all}\ n\geq n_2, \nonumber \\
		\Rightarrow & p\tilde{\psi}_{\lambda_n}(t_nu_n)  \leq\lambda_n\int_\Omega e(z,t_nu_n)dz \nonumber \\
		& \leq\lambda^*\int_\Omega e(z,t_nu_n)dz\ \mbox{(since}\ \lambda_n\leq\lambda^*\ \mbox{for all}\ n\in\NN\ \mbox{and}\ e\geq 0) \nonumber \\
		& \leq \lambda^*\int_\Omega e(z,u_n)dz+\lambda^*||d||_1\ \mbox{(see (\ref{eq70}) and hypothesis H(f)(iii))}\nonumber \\
		& \leq M_4\ \mbox{for some}\ M_4>0,\ \mbox{all}\ n\geq n_2.
	\end{eqnarray}
	
	Comparing (\ref{eq68}) and (\ref{eq71}) again we have a contradiction.
	
	This proves the claim.
	
	On account of  Claim~\ref{c5}, we may assume that
	\begin{equation}\label{eq72}
		u_n\stackrel{w}{\rightarrow}u^*\ \mbox{in}\ W^{1,p}(\Omega)\ \mbox{and}\ u_n\rightarrow u^*\ \mbox{in}\ L^r(\Omega)\ \mbox{and in}\ L^p(\partial\Omega).
	\end{equation}
	
	In (\ref{eq53}) we choose $h=u_n-u^*\in W^{1,p}(\Omega)$, pass to the limit as $n\rightarrow\infty$ and use (\ref{eq72}). We obtain
	\begin{eqnarray}\label{eq73}
		&&\lim\limits_{n\rightarrow\infty}\langle A(u_n),u_n-u^*\rangle=0, \nonumber \\
		&\Rightarrow &u_n\rightarrow u^*\ \mbox{in}\ W^{1,p}(\Omega)\ \mbox{(see Proposition \ref{prop4}).}
	\end{eqnarray}
	
	So, if in (\ref{eq53}) we pass to the limit as $n\rightarrow\infty$ and use (\ref{eq73}), then
	\begin{eqnarray}\label{eq74}
		&&\langle A(u^*),h\rangle+\int_\Omega\xi(z)(u^*)^{p-1}hdz+\int_{\partial\Omega}\beta(z)(u^*)^{p-1}hd\sigma=\lambda^*\int_\Omega f(z,u^*)hdz \nonumber \\
		&& \mbox{for all}\ h\in W^{1,p}(\Omega), \nonumber \\
		&\Rightarrow &-{\rm div}\,a(Du^*(z))+\xi(z)u^*(z)^{p-1}=\lambda^*f(z,u^*(z))\ \mbox{for almost all}\ z\in\Omega, \nonumber \\
		&&\frac{\partial u^*}{\partial n_a}+\beta(z)(u^*)^{p-1}=0\ \mbox{on}\ \partial\Omega\ \mbox{(see Papageorgiou and R\u adulescu \cite{27})}.
	\end{eqnarray}
	
	We know that
	$$\overline{u}_{\lambda_1}\leq u_n\ \mbox{for all}\ n\in\NN$$
	(see Claim \ref{claim2} in the proof of Proposition \ref{prop10} and use the fact that $\lambda\mapsto\overline{u}_\lambda$ is nondecreasing from $(0,+\infty)$ into $C^1(\overline{\Omega}))$. Hence in the limit as $n\rightarrow\infty$, we obtain
	\begin{eqnarray*}
		&&\overline{u}_{\lambda_1}\leq u^*, \\
		&\Rightarrow & u^*\in S^{\lambda^*}_+\ \mbox{(see (\ref{eq74})) and so}\ \lambda^*\in\mathcal{L}.
	\end{eqnarray*}
\end{proof}
\begin{prop}\label{prop17}
	If hypotheses $H(a),H(\xi),H(\beta),H_0,H(f)$ hold and $\lambda\in(0,\lambda^*)$, then problem (\ref{eqp}) has at least two positive solutions
	$$v_\lambda,\hat{u}_\lambda\in D_+\ \mbox{with}\ \hat{u}_\lambda-u_\lambda\in C_+\backslash\{0\}.$$
\end{prop}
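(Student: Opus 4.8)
The plan is to obtain the first solution $v_\lambda$ as a local minimizer of the energy functional $\varphi_\lambda$ and the second one $\hat u_\lambda\ge v_\lambda$ via the mountain pass theorem applied to a one-sided truncation of the reaction. \textbf{Step 1 (a first solution below a supersolution).} Since $\lambda<\lambda^*$, fix $\mu\in(\lambda,\lambda^*)$; by Propositions~\ref{prop13} and \ref{prop16}, $\mu\in\mathcal{L}$, so there is $u_\mu\in S^\mu_+\subseteq D_+$ (Corollary~\ref{cor12}). Truncate the reaction at the levels $0$ and $u_\mu$: let $\hat f_\lambda(z,x)$ be $0$ for $x<0$, equal to $\lambda f(z,x)$ for $0\le x\le u_\mu(z)$, and equal to $\lambda f(z,u_\mu(z))$ for $x>u_\mu(z)$, and let $\hat\varphi_\lambda$ be the corresponding $C^1$-functional (built as in the proofs of Propositions~\ref{prop10} and \ref{prop11}). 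By Corollary~\ref{cor3}, hypothesis $H_0$ and Lemmata~\ref{lem5}, \ref{lem6}, the functional $\hat\varphi_\lambda$ is coercive and sequentially weakly lower semicontinuous, hence it has a global minimizer $v_\lambda$; using $H(f)(iv)$ and $\tau<q<p$ exactly as in \eqref{eq45}, one gets $\hat\varphi_\lambda(v_\lambda)<0=\hat\varphi_\lambda(0)$, so $v_\lambda\neq 0$. Testing $\hat\varphi_\lambda'(v_\lambda)=0$ first with $-v_\lambda^-$ and then with $(v_\lambda-u_\mu)^+$ (comparing with $u_\mu\in S^\mu_+$ and using $f\ge 0$, $\lambda<\mu$) yields $v_\lambda\in[0,u_\mu]\setminus\{0\}$; on $[0,u_\mu]$ the equation for $v_\lambda$ is precisely the weak form of \eqref{eqp}, so $v_\lambda$ solves \eqref{eqp}, and the nonlinear regularity theory together with the nonlinear maximum principle (as in Proposition~\ref{prop11}) give $v_\lambda\in S^\lambda_+\subseteq D_+$.

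\textbf{Step 2 ($v_\lambda$ is a local minimizer of $\varphi_\lambda$).} Repeating the computation of Corollary~\ref{cor14}, that is, Proposition~\ref{prop7} applied with the strict pointwise gap $(\mu-\lambda)\eta_s>0$ furnished by $H(f)(iv)$ and $\mu>\lambda$, one obtains $u_\mu-v_\lambda\in{\rm int}\,C^*_+(\Sigma_0)$ with $\Sigma_0=\{z\in\partial\Omega:u_\mu(z)=v_\lambda(z)\}$. By the Remark following Proposition~\ref{prop7} there is $\epsilon>0$ such that $0\le v_\lambda+h\le u_\mu$ for every $h\in C^1_*(\overline{\Omega})$ with $\|h\|_{C^1(\overline{\Omega})}\le\epsilon$ (the lower bound holds because $v_\lambda\in D_+$). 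On this ball the truncation is inactive, so $\hat\varphi_\lambda=\varphi_\lambda$ there, and since $v_\lambda$ minimizes $\hat\varphi_\lambda$ globally, $v_\lambda$ is a local $C^1_*(\overline{\Omega})$-minimizer of $\varphi_\lambda$. Proposition~\ref{prop8}, with $V=C^1_*(\overline{\Omega})$ and $X=W^{1,p}_*(\Omega)$, then yields that $v_\lambda$ is a local $W^{1,p}_*(\Omega)$-minimizer of $\varphi_\lambda$ (a local $W^{1,p}(\Omega)$-minimizer when $\Sigma_0=\emptyset$).

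\textbf{Step 3 (a second solution above $v_\lambda$).} Truncate $f$ from below at $v_\lambda$: let $\tilde f_\lambda(z,x)$ be $\lambda f(z,v_\lambda(z))$ for $x\le v_\lambda(z)$ and $\lambda f(z,x)$ for $x>v_\lambda(z)$, with primitive $\tilde F_\lambda$ and associated $C^1$-functional $\tilde\varphi_\lambda$. Three ingredients are needed. First, every $u\in K_{\tilde\varphi_\lambda}$ satisfies $u\ge v_\lambda$: test $\tilde\varphi_\lambda'(u)=0$ with $(v_\lambda-u)^+$, compare with $v_\lambda\in S^\lambda_+$ and use the local monotonicity in $H(f)(v)$; such a $u$ then solves \eqref{eqp}, and by regularity and the maximum principle $u\in S^\lambda_+\subseteq D_+$ with $u\ge v_\lambda$. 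Second, $\tilde\varphi_\lambda$ satisfies the C-condition: boundedness of the relevant sequences follows as in Claim~\ref{cl1} of Proposition~\ref{prop9} — add the energy bound to the equation tested with $u_n$ and use $H(a)(iv)$, the quasimonotonicity hypothesis $H(f)(iii)$ (our substitute for the Ambrosetti-Rabinowitz condition) and Lemmata~\ref{lem5}, \ref{lem6} — and strong convergence then comes from Proposition~\ref{prop4}. Third, $v_\lambda$ is a local minimizer of $\tilde\varphi_\lambda$ (a standard transfer from Step 2, since $\tilde\varphi_\lambda$ agrees with $\varphi_\lambda$ for arguments $\ge v_\lambda$), while for $u\in D_+$ the $(p-1)$-superlinearity in $H(f)(ii)$ gives $\tilde\varphi_\lambda(v_\lambda+t(u_\mu-v_\lambda))\to-\infty$ as $t\to+\infty$. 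Now the usual dichotomy for local minimizers applies: either $v_\lambda$ is not isolated in $K_{\tilde\varphi_\lambda}$, in which case any other element of $K_{\tilde\varphi_\lambda}$ serves as $\hat u_\lambda$; or $v_\lambda$ is a strict local minimizer, the mountain pass geometry of Theorem~\ref{th1} holds with $u_0=v_\lambda$ and $u_1=v_\lambda+t_1(u_\mu-v_\lambda)$ for $t_1$ large, and Theorem~\ref{th1} produces $\hat u_\lambda\in K_{\tilde\varphi_\lambda}$ with $\tilde\varphi_\lambda(\hat u_\lambda)>\tilde\varphi_\lambda(v_\lambda)$, so $\hat u_\lambda\neq v_\lambda$. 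In either case $\hat u_\lambda\in S^\lambda_+\subseteq D_+$ and $\hat u_\lambda-v_\lambda\in C_+\setminus\{0\}$.

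\textbf{Main obstacle.} The crux is Step 2. Because the operator $a(\cdot)$ is not $(p-1)$-homogeneous and the boundary condition is of Robin type, one cannot in general compare $v_\lambda$ and $u_\mu$ on all of $\partial\Omega$; the boundary contact set $\Sigma_0$ must be tracked, which is exactly why one needs the strong comparison principle (Proposition~\ref{prop7}) together with the passage from local $C^1$-minimizers to local $W^{1,p}$-minimizers (Proposition~\ref{prop8}) carried out in the subspaces $C^1_*(\overline{\Omega})$ and $W^{1,p}_*(\Omega)$. A second, more routine, difficulty is verifying the C-condition for $\tilde\varphi_\lambda$ using only the weak quasimonotonicity hypothesis $H(f)(iii)$ in place of the Ambrosetti-Rabinowitz condition.
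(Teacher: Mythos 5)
Your overall strategy coincides with the paper's: produce a first solution strictly ordered below a solution at a larger parameter, use the strong comparison principle (Proposition \ref{prop7}) and the $C^1_*$-to-$W^{1,p}_*$ passage (Proposition \ref{prop8}) to make it a local minimizer in the appropriate sense, and then run the mountain pass theorem on a reaction truncated from below to force the second solution above the first. The paper uses $u^*\in S^{\lambda^*}_+$ together with Corollary \ref{cor14} where you use $u_\mu$ with $\mu\in(\lambda,\lambda^*)$; that difference is immaterial.

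There is, however, one genuine gap, in Step 3: the claim that $v_\lambda$ is a local minimizer of the lower-truncated functional $\tilde\varphi_\lambda$ by ``a standard transfer from Step 2, since $\tilde\varphi_\lambda$ agrees with $\varphi_\lambda$ for arguments $\ge v_\lambda$.'' The set $\{u:u\ge v_\lambda\}$ is not a neighborhood of $v_\lambda$ in any of the relevant topologies, and on the part of a neighborhood where $u\not\ge v_\lambda$ the primitives $\tilde F_\lambda(z,\cdot)$ and $\lambda F(z,\cdot)$ differ by a quantity that is not controlled by the local minimality of $\varphi_\lambda$ (they agree only up to an additive constant on the order cone above $v_\lambda$). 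So local minimality of $\varphi_\lambda$ at $v_\lambda$ does not transfer to $\tilde\varphi_\lambda$ by this reasoning, and the mountain pass geometry for $\tilde\varphi_\lambda$ is not yet established. The repair is exactly the paper's device in (\ref{eq80})--(\ref{eq83}): truncate $\tilde f_\lambda$ \emph{also from above} at $u_\mu$, observe that the resulting coercive functional has its critical set in $[v_\lambda,u_\mu]$, so that (after assuming $K_{\tilde\varphi_\lambda}\cap[v_\lambda,u_\mu]=\{v_\lambda\}$, else you already have the second solution) its global minimizer is $v_\lambda$; since the doubly truncated functional coincides with $\tilde\varphi_\lambda$ on $\{u\le u_\mu\}$, and the separation $u_\mu-v_\lambda\in{\rm int}\,C^*_+(\Sigma_0)$ places an entire $C^1_*$-ball around $v_\lambda$ inside that set, you get that $v_\lambda$ is a local $C^1_*$-minimizer of $\tilde\varphi_\lambda$, and Proposition \ref{prop8} upgrades this to $W^{1,p}_*$. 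Two smaller points: after the mountain pass on the affine manifold $v_\lambda+W^{1,p}_*(\Omega)$ you only get $\langle\tilde\varphi_\lambda'(\hat u_\lambda),h\rangle=0$ for $h\in W^{1,p}_*(\Omega)$, and you should say why this still yields a solution of \eqref{eqp} (the paper invokes the nonlinear Green identity on $W^{1,p}_*(\Omega)$); and the C-condition for $\tilde\varphi_\lambda$ requires the full argument of Claim \ref{c5} in Proposition \ref{prop16} (including the normalization $y_n=u_n/\|u_n\|$ and the fibering argument with $t_n$), not just the energy-plus-equation addition you describe, since only $H(f)(iii)$ is available in place of the AR-condition.
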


\begin{proof}
	From Proposition \ref{prop16} we know that $\lambda^*\in\mathcal{L}$. So, we can find $u^*\in S^{\lambda^{*}}_+ \subseteq D_+$. Invoking Corollary \ref{cor14}, we can find $u_\lambda\in S^\lambda_+\subseteq D_+$ such that
	\begin{equation}\label{eq75}
		u^*-u_\lambda\in {\rm int}\, C^*_+\left(\Sigma_0\right)
	\end{equation}
	with $\Sigma_0=\{z\in\partial\Omega:u_\lambda(z)=u^*(z)\}$. Moreover, from the proof of Proposition \ref{prop13} we know that $u_\lambda$ is a global minimizer of the functional $w_\lambda$.
	
	Using $u_\lambda\in S^\lambda_+\subseteq D_+$, we introduce the following truncation of the reaction term in problem (\ref{eqp}):
	\begin{equation}\label{eq76}
		\vartheta_\lambda(z,x)=\left\{\begin{array}{ll}
			\lambda f(z,u_\lambda(z)) 	& \mbox{if}\ x\leq u_\lambda(z) \\
			\lambda f(z,x)				& \mbox{if}\ u_\lambda(z)<x.
		\end{array}\right.
	\end{equation}
	
	This is a Carath\'eodory function. We set $\Theta_\lambda(z,x)=\int^x_0\vartheta_\lambda(z,s)ds$ and consider the $C^1-$functional $\hat{\varphi}_\lambda:W^{1,p}(\Omega)\rightarrow\RR$ defined by
	\begin{equation*}
		\hat{\varphi}_\lambda(u)=\int_\Omega G(Du)dz+\frac{1}{p}\int_\Omega\xi(z)|u|^pdz+\frac{1}{p}\int_{\partial\Omega}\beta(z)|u|^pd\sigma-\int_\Omega\Theta_\lambda(z,u)dz
	\end{equation*}
	for all $u\in W^{1,p}(\Omega)$.
	
	From (\ref{eq76}) it is clear $\vartheta_\lambda(z,\cdot)$ has the same asymptotic behavior for $x\rightarrow+\infty$ as $f(z,\cdot)$. So, reasoning as in Claim~\ref{c5} in the proof of Proposition \ref{prop16}, we show that
	\begin{equation}\label{eq77}
		\hat{\varphi}_\lambda\ \mbox{satisfies the C-condition}.
	\end{equation}
	\begin{claim}\label{c6}
		$K_{\hat{\varphi}_\lambda}\subseteq[u_\lambda)\cap D_+=\{u\in D_+:u_\lambda(z)\leq u(z)\ \mbox{for all}\ z\in\overline{\Omega}\}$.	
	\end{claim}
	
	Let $u\in K_{\hat{\varphi}_\lambda}$. Then
	\begin{equation}\label{eq78}
		\langle A(u),h\rangle+\int_\Omega\xi(z)|u|^{p-2}uhdz+\int_{\partial\Omega}\beta(z)|u|^{p-2}uhd\sigma=\int_\Omega\vartheta_\lambda(z,u)hdz
	\end{equation}
	for all $h\in W^{1,p}(\Omega).$
	
	In (\ref{eq78}) we choose $h=(u_\lambda-u)^+\in W^{1,p}(\Omega)$. Then
	\begin{eqnarray*}
		&&\langle A(u),(u_\lambda-u)^+\rangle+\int_\Omega\xi(z)|u|^{p-2}u(u_\lambda-u)^+dz+\int_{\partial\Omega}\beta(z)|u|^{p-2}u(u_\lambda-u)^+d\sigma \\
		&=&\int_\Omega\lambda f(z,u_\lambda)(u_\lambda-u)^+dz\ \mbox{(see (\ref{eq76}))} \\
		&=&\langle A(u_\lambda),(u_\lambda-u)^+\rangle+\int_\Omega\xi(z)u^{p-1}_\lambda(u_\lambda-u)^+dz+\int_{\partial\Omega}\beta(z)u^{p-1}_\lambda(u_\lambda-u)^+d\sigma \\
		&& \mbox{(since}\ u_\lambda\in S^\lambda_+)\\
		&\Rightarrow & u_\lambda\leq u.
	\end{eqnarray*}
	
	As before, the nonlinear regularity theory implies that $u\in D_+$.
	
	This proves Claim~\ref{c6}.
	
	Claim~\ref{c6} allows us to assume that
	\begin{equation}\label{eq79}
		K_{\hat{\varphi}_\lambda}\cap[u_\lambda,u^*]=\{u_\lambda\}.
	\end{equation}
	Indeed, otherwise we already have a second positive smooth (due to nonlinear regularity) solution of problem (\ref{eqp}) which is bigger than $u_\lambda$ and so we are done.
	
	We consider the following truncation of $\vartheta_\lambda(z,\cdot):$
	\begin{equation}\label{eq80}
		\tilde{\vartheta}_\lambda(z,x)=\left\{\begin{array}{ll}
			\vartheta_\lambda(z,x) 		& \mbox{if}\ x\leq u^*(z) \\
			\vartheta_\lambda(z,u^*(z))	& \mbox{if}\ u^*(z)<x.
		\end{array}\right.
	\end{equation}
	
	This is a Carath\'eodory function. We set $\tilde{\Theta}_\lambda(z,x)=\int^x_0\tilde{\vartheta}_\lambda(z,s)ds$ and consider the $C^1-$functional $\tilde{\varphi}_\lambda:W^{1,p}(\Omega)\rightarrow\RR$ defined by
	\begin{equation*}
		\tilde{\varphi}_\lambda(u)=\int_\Omega G(Du)dz+\frac{1}{p}\int_\Omega\xi(z)|u|^pdz+\frac{1}{p}\int_{\partial\Omega}\beta(z)|u|^pd\sigma-\int_\Omega\tilde{\Theta}_\lambda(z,u)dz
	\end{equation*}
	for all $u\in W^{1,p}(\Omega)$.
	
	Using (\ref{eq80}) we easily show that
	\begin{equation}\label{eq81}
		K_{\tilde{\varphi}_\lambda}\subseteq[u_\lambda,u^*]\cap D_+.
	\end{equation}

	From (\ref{eq80}) it is clear that $\tilde{\varphi}_\lambda$ is coercive. Also, it is sequentially weakly lower semicontinuous. So, we can find $\tilde{u}_\lambda\in W^{1,p}(\Omega)$ such that
	\begin{eqnarray}\label{eq82}
		&&\tilde{\varphi}_\lambda(\tilde{u}_\lambda)=\inf[\tilde{\varphi}_\lambda(u):u\in W^{1,p}(\Omega)], \nonumber \\
		&\Rightarrow &\tilde{u}_\lambda\in K_{\tilde{\varphi}_\lambda}\subseteq[u_\lambda,u^*]\cap D_+\ \mbox{(see (\ref{eq81})).}
	\end{eqnarray}
	
	From (\ref{eq76}) and (\ref{eq80}), we see that
	\begin{eqnarray*}
		&&\tilde{\varphi}'_\lambda|_{[0,u^*]}=\hat{\varphi}'_\lambda|_{[0,u^*]}, \\
		&\Rightarrow &\tilde{u}_\lambda\in K_{\hat{\varphi}_\lambda}\ \mbox{(see (\ref{eq82}))}, \\
		&\Rightarrow &\tilde{u}_\lambda=u_\lambda\ \mbox{(see (\ref{eq79}), (\ref{eq82}))}.
	\end{eqnarray*}
	
	Then from (\ref{eq75}) we infer that for $\Sigma_0=\{z\in\partial\Omega:u_\lambda(z)=u^*(z)\}$ we have
	\begin{eqnarray}\label{eq83}
		&&u_\lambda\ \mbox{is a}\ C^1_*(\Omega)-\mbox{minimizer of}\ \hat{\varphi}_\lambda, \nonumber \\
		&\Rightarrow &u_\lambda\ \mbox{is a}\ W^{1,p}_*(\Omega)-\mbox{minimizer of}\ \hat{\varphi}_\lambda\ \mbox{(see Proposition \ref{prop8})}.
	\end{eqnarray}
	
	Without any loss of generality, we may assume that
	\begin{equation}\label{eq84}
		K_{\hat{\varphi}_\lambda}\ \mbox{is finite}.
	\end{equation}
	
	Otherwise Claim~\ref{c6} and (\ref{eq76}) imply that we already have a whole sequence of distinct smooth solutions of (\ref{eqp}) bigger than $u_\lambda$ and so we are done. Then (\ref{eq84}) implies that we can find $\rho\in(0,1)$ small such that
	\begin{equation}\label{eq85}
		\hat{\varphi}_\lambda(u_\lambda)<\inf\left[ \hat{\varphi}_\lambda(u_\lambda+h):||h||\leq\rho,h\in W^{1,p}_*(\Omega) \right]=\hat{m}^\lambda_\rho\,.
	\end{equation}
	
	In addition, hypothesis $H(f)(ii)$ implies that for all $h\in {\rm int}\,C^*_+\left(\Sigma_0\right)$, we have
	\begin{equation}\label{eq86}
		\hat{\varphi}_\lambda(u_\lambda+th)\rightarrow-\infty\ \mbox{as}\ t\rightarrow+\infty\,.
	\end{equation}
	
	From (\ref{eq77}), (\ref{eq85}), (\ref{eq86}) we see that we can apply Theorem \ref{th1} (the mountain pass theorem) on the affine space (manifold) $Y=u_\lambda+W^{1,p}_*(\Omega)$) and find $\hat{u}_\lambda \in Y$ such that
	\begin{eqnarray}\label{eq87}
		&&\langle \hat{\varphi}'_\lambda(\hat{u}_\lambda),h\rangle=0\ \mbox{for all}\ h\in W^{1,p}_*(\Omega),\ \hat{m}^\lambda_\rho\leq\hat{\varphi}_\lambda(\hat{u}_\lambda)\ \mbox{(see (\ref{eq85}))}, \\
		&\Rightarrow &u_\lambda\leq\hat{u}_\lambda\ \mbox{(by choosing}\ h=(u_\lambda-\hat{u}_\lambda)^+\in W^{1,p}_*(\Omega)).\nonumber
	\end{eqnarray}
	
	Also, using the nonlinear Green's identity on the space $W^{1,p}_*(\Omega)$ (see Casas and Fernandez \cite{7} and Kenmochi \cite{22}) from (\ref{eq87}) we infer that
	$$\hat{u}_{\lambda}\in D_+\ \mbox{is a solution of}\ \eqref{eqp}\ (\lambda\in(0,\lambda^*)).$$
	
	Moreover, from (\ref{eq85}) we have
	$$\hat{u}_\lambda-u_\lambda\in C_+\backslash\{0\}.$$
\end{proof}

Summarizing the results of this section, we can formulate the following bifurcation-type result.
\begin{theorem}\label{th18}
	If hypotheses $H(a),H(\xi),H(\beta),H_0,H(f)$ hold, then there exists $\lambda^*>0$ such that
	\begin{itemize}
		\item [(a)] for all $\lambda\in(0,\lambda^*)$ problem (\ref{eqp}) has at least two positive solutions
				$$u_\lambda,\hat{u}_\lambda\in D_+\ \mbox{with}\ \hat{u}_\lambda-u_\lambda\in C_+\backslash\{0\};$$
		\item [(b)] for $\lambda=\lambda^*$ problem (\ref{eqp}) has at least one positive solution
				$$u^*\in D_+;$$
		\item [(c)] for $\lambda>\lambda^*$ problem (\ref{eqp}) has no positive solution.
	\end{itemize}
\end{theorem}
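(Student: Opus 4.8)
The plan is to assemble Theorem \ref{th18} directly from the propositions already established in this section, so the argument is essentially bookkeeping once those are in hand. First I would set
$$\mathcal{L}=\{\lambda>0:\ \mbox{problem \eqref{eqp} admits a positive solution}\}$$
and define $\lambda^*=\sup\mathcal{L}$. Proposition \ref{prop11} shows that $(0,\lambda_0)\subseteq\mathcal{L}$, so $\mathcal{L}\neq\emptyset$ and hence $\lambda^*>0$; Proposition \ref{prop15} gives $\lambda^*<+\infty$. Thus $\lambda^*\in(0,+\infty)$ is a genuine critical parameter value, which is the number whose existence the theorem asserts.

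Next I would invoke Proposition \ref{prop13}: if $\lambda\in\mathcal{L}$ and $0<\alpha<\lambda$, then $\alpha\in\mathcal{L}$. This makes $\mathcal{L}$ an interval with left endpoint $0$, so in particular $(0,\lambda^*)\subseteq\mathcal{L}$. For part (a), for each $\lambda\in(0,\lambda^*)$ I would apply Proposition \ref{prop17}, which produces the two positive solutions $u_\lambda,\hat{u}_\lambda\in D_+$ with $\hat{u}_\lambda-u_\lambda\in C_+\backslash\{0\}$ (so the solutions are ordered and distinct). For part (b), Proposition \ref{prop16} gives exactly $\lambda^*\in\mathcal{L}$, i.e. \eqref{eqp} has a positive solution for $\lambda=\lambda^*$, and by Corollary \ref{cor12} any positive solution of \eqref{eqp} automatically lies in $D_+$, so the solution $u^*$ can be taken in $D_+$. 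For part (c), $\lambda>\lambda^*=\sup\mathcal{L}$ forces $\lambda\notin\mathcal{L}$ by the very definition of the supremum, so \eqref{eqp} has no positive solution for such $\lambda$. This closes all three items.

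The synthesis itself carries no real obstacle, since the analytic content has been packaged in the preceding results: the Cerami-condition verification, the strong comparison principle of Proposition \ref{prop7} used to push solutions downward, the $(S)_+$-property of $A$ (Proposition \ref{prop4}) to upgrade weak to strong convergence, the uniqueness for the $\tau$-sublinear auxiliary problem (Proposition \ref{prop10}), and the truncation plus mountain pass argument on the affine manifold $u_\lambda+W^{1,p}_*(\Omega)$ in Proposition \ref{prop17}. If I had to single out the step that does the heavy lifting, it is Proposition \ref{prop16}: closing $\mathcal{L}$ at its endpoint requires the boundedness of the family $\{u_n\}$ along $\lambda_n\uparrow\lambda^*$ (Claim \ref{c5}), which is precisely where the non-Ambrosetti--Rabinowitz hypothesis $H(f)(iii)$ and the growth conditions in $H(a)(iv)$ are genuinely used; everything in the present theorem downstream of that is immediate.
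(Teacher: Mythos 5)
Your proposal is correct and is essentially the paper's own argument: Theorem \ref{th18} is stated there explicitly as a summary of the preceding results, assembled exactly as you describe ($\lambda^*=\sup\mathcal{L}$, with Propositions \ref{prop11}, \ref{prop13}, \ref{prop15}, \ref{prop16}, \ref{prop17} and Corollary \ref{cor12} supplying, respectively, nonemptiness, the interval structure, finiteness of $\lambda^*$, admissibility of $\lambda^*$, the multiplicity for $\lambda<\lambda^*$, and the regularity $S^\lambda_+\subseteq D_+$). Your identification of Proposition \ref{prop16} (Claim \ref{c5}) as the step carrying the real analytic weight is also accurate.
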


\section{Big, Small and Minimal Positive Solutions}

In this section we show that as $\lambda\rightarrow 0^+$, we can produce positive solutions of problem (\ref{eqp}) which have $W^{1,p}(\Omega)-$norm which is arbitrarily big and arbitrarily small. Moreover, we show that for every $\lambda\in (0,\lambda^*)$, problem (\ref{eqp}) admits a smallest positive solution $u^*_\lambda\in D_+$ and study the monotonicity and continuity properties of the map $\lambda\mapsto u^*_\lambda$.
\begin{theorem}\label{th19}
	If hypotheses $H(a),H(\xi),H(\beta),H_0,H(f)$ hold and $\lambda_n\rightarrow0^+$, then we can find positive solutions
	$$\hat{u}_n=\hat{u}_{\lambda_n}\in S^{\lambda_n}_+\subseteq D_+\ \mbox{and}\ u_n=u_{\lambda_n}\in S^{\lambda_n}_+\subseteq D_+\ \mbox{for all}\ n\in\NN$$
	such that $||\hat{u}_n||\rightarrow+\infty$ and $||u_n||\rightarrow0$ as $n\rightarrow\infty$.
\end{theorem}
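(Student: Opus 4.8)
The plan is to treat the two assertions separately; fix $t_0>0$ as in Claim~\ref{cl2} and $\lambda_0>0$ as in Proposition~\ref{prop9}, and work with $\lambda$ small enough that $\lambda<\lambda_0$.

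\emph{Solutions with small norm.} I would first show that the minimal positive solution $\tilde u^*_\lambda\in D_+$ of the auxiliary problem \eqref{eqa} (Proposition~\ref{prop10}) satisfies $\|\tilde u^*_\lambda\|\to0$ as $\lambda\to0^+$. The key step is to produce a positive solution of \eqref{eqa} of \emph{uniformly} bounded norm by minimizing $\psi_\lambda$ over the closed ball $\overline{B}_{t_0}=\{u\in W^{1,p}(\Omega):\|u\|\le t_0\}$: since $\tau<q<p<r$, estimate \eqref{eq35} gives $\psi_\lambda(tv)<0$ for $v\in D_+$ with $\|v\|=1$ and $t>0$ small, so $\inf_{\overline{B}_{t_0}}\psi_\lambda<0$, whereas $\inf_{\partial B_{t_0}}\psi_\lambda=m_\lambda>0$ by Claim~\ref{cl2}; sequential weak lower semicontinuity of $\psi_\lambda$ then yields a minimizer $\tilde v_\lambda$ lying in the \emph{open} ball $B_{t_0}$, hence a local minimizer of $\psi_\lambda$ on $W^{1,p}(\Omega)$, and arguing as in the proof of Proposition~\ref{prop9} one gets $\tilde v_\lambda\in D_+$ solving \eqref{eqa} with $\|\tilde v_\lambda\|<t_0$. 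By minimality, $0\le\tilde u^*_\lambda\le\tilde v_\lambda$ in $\Omega$; testing \eqref{eqa} for $\tilde u^*_\lambda$ with $\tilde u^*_\lambda$, using $0\le\tilde u^*_\lambda\le\tilde v_\lambda$, $\|\tilde v_\lambda\|<t_0$ to control the $L^\tau$- and $L^r$-norms, and invoking Lemma~\ref{lem2}(c), Lemmata~\ref{lem5}, \ref{lem6}, $H_0$ and the embeddings $W^{1,p}(\Omega)\hookrightarrow L^\tau(\Omega)$, $W^{1,p}(\Omega)\hookrightarrow L^r(\Omega)$ (recall $\tau,r<p^*$) gives $c\|\tilde u^*_\lambda\|^p\le\lambda\,C$ with $C$ independent of $\lambda$, whence $\|\tilde u^*_\lambda\|\to0$. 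Now let $u_n:=u_{\lambda_n}\in S^{\lambda_n}_+\subseteq D_+$ be the positive solution of \eqref{eqp} constructed in Proposition~\ref{prop11} (the smaller solution of Theorem~\ref{th18}(a)), so that $0\le u_n\le\tilde u^*_{\lambda_n}$; testing \eqref{eqp} for $u_n$ with $u_n$ and using \eqref{eq17}, Lemma~\ref{lem2}(c), Lemmata~\ref{lem5}, \ref{lem6}, $H_0$ yields $c\|u_n\|^p\le\lambda_n(\hat\eta\|\tilde u^*_{\lambda_n}\|^\tau_\tau+c_{10}\|\tilde u^*_{\lambda_n}\|^r_r)\to0$, so $\|u_n\|\to0$.

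\emph{Solutions with large norm.} Here I would \emph{not} reuse the second solution of Theorem~\ref{th18}(a), but instead run the mountain pass theorem directly on the energy functional $\varphi_\lambda$ of \eqref{eqp}, so as to keep the critical level under uniform control. The computation leading to \eqref{eq26}--\eqref{eq27}, applied now to $\varphi_\lambda$ (via Corollary~\ref{cor3}, \eqref{eq17}, Lemmata~\ref{lem5}, \ref{lem6}, $H_0$), gives $\varphi_\lambda(u)\ge[c_{13}-\lambda c_{14}(\|u\|^{\tau-p}+\|u\|^{r-p})]\|u\|^p$, hence $\inf_{\|u\|=t_0}\varphi_\lambda=:m'_\lambda\ge m_\lambda>0$ for $\lambda<\lambda_0$, and $m_\lambda\to c_{13}t_0^p$ as $\lambda\to0^+$; in particular $m'_\lambda\ge\tfrac12 c_{13}t_0^p>0$ for all $\lambda$ small. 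By $H(f)(ii)$ one has $\varphi_\lambda(tu)\to-\infty$ as $t\to+\infty$ for $u\in D_+$ (cf. \eqref{eq28}), and, arguing as for \eqref{eq77} (see Claim~\ref{c5}), $\varphi_\lambda$ satisfies the C-condition. Theorem~\ref{th1} then produces $\hat u_\lambda\in K_{\varphi_\lambda}$ with $\varphi_\lambda(\hat u_\lambda)\ge m'_\lambda$; choosing $h=-\hat u^-_\lambda$ in $\varphi'_\lambda(\hat u_\lambda)=0$ gives $\hat u_\lambda\ge0$, and $\hat u_\lambda\ne0$ because $\varphi_\lambda(\hat u_\lambda)>0=\varphi_\lambda(0)$; the nonlinear regularity theory (Lieberman~\cite{23}) and the nonlinear maximum principle (Pucci and Serrin~\cite{40}) then give $\hat u_\lambda\in D_+$, so $\hat u_\lambda\in S^\lambda_+$. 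Put $\hat u_n:=\hat u_{\lambda_n}$. If $\{\|\hat u_n\|\}$ did not diverge, then along a subsequence $\{\hat u_n\}$ would be bounded in $W^{1,p}(\Omega)$, so $\hat u_n\stackrel{w}{\rightarrow}\hat u$ in $W^{1,p}(\Omega)$, $\hat u_n\to\hat u$ in $L^r(\Omega)$ and in $L^p(\partial\Omega)$; testing \eqref{eqp} for $\hat u_n$ with $h=\hat u_n-\hat u$ and passing to the limit (the right-hand side $\lambda_n\int_\Omega f(z,\hat u_n)(\hat u_n-\hat u)\,dz\to0$, since $\lambda_n\to0$ and $\{f(\cdot,\hat u_n)\}$ is bounded in $L^{r'}(\Omega)$ by \eqref{eq17}) yields $\limsup_n\langle A(\hat u_n),\hat u_n-\hat u\rangle\le0$, hence $\hat u_n\to\hat u$ in $W^{1,p}(\Omega)$ by Proposition~\ref{prop4}; letting $n\to\infty$ in the weak formulation and testing the limit equation with $\hat u$ forces $\hat u=0$ (Lemma~\ref{lem2}(c), Lemmata~\ref{lem5}, \ref{lem6}, $H_0$). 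But then $\hat u_n\to0$ in $W^{1,p}(\Omega)$ and consequently $\varphi_{\lambda_n}(\hat u_n)\to0$ (every term tends to $0$, using that $G$ is continuous with $G(0)=0$, Corollary~\ref{cor3}, and $\lambda_n\to0$), contradicting $\varphi_{\lambda_n}(\hat u_n)\ge\tfrac12 c_{13}t_0^p>0$. Hence $\|\hat u_n\|\to+\infty$; since also $\hat u_n,u_n\in S^{\lambda_n}_+\subseteq D_+$ by Corollary~\ref{cor12}, the proof is complete.

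\emph{Main difficulty.} The delicate point is the divergence $\|\hat u_n\|\to+\infty$. It is tempting to take $\hat u_n$ to be the second solution of Theorem~\ref{th18}(a), but that one is characterized by a mountain pass on the affine manifold $u_\lambda+W^{1,p}_*(\Omega)$, whose level need not stay bounded away from $0$ as $\lambda\to0^+$; the remedy is to mountain-pass on $\varphi_\lambda$ itself, where the linking value $m'_\lambda$ --- and hence the critical level --- is bounded below by a $\lambda$-independent positive constant, so that boundedness of $\hat u_n$ would force $\varphi_{\lambda_n}(\hat u_n)\to0$, a contradiction. The remaining ingredient, the C-condition for $\varphi_\lambda$ in the absence of the Ambrosetti--Rabinowitz condition, is exactly the quasimonotonicity argument of Claim~\ref{c5}.
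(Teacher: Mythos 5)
Your proposal is correct, but it reaches both conclusions by a genuinely different mechanism than the paper. For the large solutions, the paper runs the mountain pass for $\varphi_{\lambda_n}$ on a sphere of \emph{divergent} radius $\lambda_n^{-\alpha}$ (with $\alpha\in(0,\tfrac{1}{r-p})$), so the critical level $k(\lambda_n)\to+\infty$; combined with the elementary upper bound $\varphi_{\lambda_n}(\hat u_n)\le c_{27}(1+\|\hat u_n\|^{r})$ this forces $\|\hat u_n\|\to+\infty$ directly and quantitatively. You instead fix the radius $t_0$, keep the mountain pass level bounded below by a $\lambda$-independent constant, and derive divergence by contradiction: a bounded subsequence would converge (via the $(S)_+$-property and $\lambda_n\to0$) to the trivial solution, forcing the levels to $0$. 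Both are sound; the paper's version is shorter and yields a rate, yours trades that for a fixed geometry plus a compactness step. For the small solutions, the paper minimizes $\varphi_{\lambda_n}$ on the \emph{shrinking} balls $\overline B_n=\{\|u\|\le\lambda_n^{\zeta}\}$ and uses the Ekeland variational principle to locate a critical point strictly inside, giving $\|u_n\|<\lambda_n^{\zeta}\to0$ at once; you instead prove an a priori decay $\|\tilde u^*_{\lambda}\|\to0$ for the minimal solution of the auxiliary problem \eqref{eqa} (via a fixed-ball minimization of $\psi_\lambda$ and testing with the solution itself) and then transfer it to the solution $u_{\lambda_n}\le\tilde u^*_{\lambda_n}$ of Proposition \ref{prop11} by another testing argument. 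This is correct and avoids Ekeland, at the price of leaning more heavily on the auxiliary-problem machinery of Propositions \ref{prop9}--\ref{prop11}. Two cosmetic points: your assertion that $m'_\lambda\to c_{13}t_0^p$ is only a $\liminf$ lower bound (which is all you use), and in the final step of the contradiction argument you should justify $\int_\Omega G(D\hat u_n)dz\to0$ from $\hat u_n\to0$ in $W^{1,p}(\Omega)$ via $0\le G(y)\le(a(y),y)_{\RR^N}\le c_4(|y|+|y|^p)$ rather than the one-sided bound of Corollary \ref{cor3} alone.
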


\begin{proof}
	From (\ref{eq17}) we have
	\begin{equation}\label{eq88}
		F(z,x)\leq\frac{\hat{\eta}}{\tau}x^\tau+\frac{c_{10}}{r}x^r\ \mbox{for almost all}\ z\in\Omega,\ \mbox{all}\ x\geq0.
	\end{equation}
	
	Then for all $u\in W^{1,p}(\Omega)$ we have
	\begin{eqnarray}\label{eq89}
		\varphi_{\lambda_n}(u) & \geq & \frac{c_1}{p(p-1)}||Du_n||^p_p+\frac{1}{p}\int_\Omega\xi(z)|u|^pdz+\frac{1}{p}\int_{\partial\Omega}\beta(z)|u|^pd\sigma-\nonumber\\
		&&\frac{\lambda_n\hat{\eta}}{\tau}||u^+||^\tau_\tau-\frac{\lambda_n c_{10}}{r}||u^+||^r_r \nonumber \\
		&& \mbox{(see Corollary \ref{cor3} and (\ref{eq88}))} \nonumber \\
		& \geq & c_{25}||u||^p-\lambda_n c_{26}(||u||^\tau+||u||^r)\ \mbox{for some}\ c_{25},c_{26}>0,\ \mbox{all}\ n\in\NN \\
		&& \mbox{(se hypothesis $H_0$ and Lemmata \ref{lem5}, \ref{lem6})}.\nonumber
	\end{eqnarray}
	
	Let $||u||=\lambda^{-\alpha}_n$ with $\alpha>0$. We set
	$$k(\lambda_n)=c_{25}\lambda^{-\alpha p}_n-c_{26}(\lambda^{1-\alpha\tau}_n+\lambda^{1-\alpha r}_n),\ n\in\NN.$$
	
	We choose $\alpha\in \left(0,\frac{1}{r-p}\right)$ (recall that $r>p$). Then we have
	$$-\alpha p<1-\alpha r<1-\alpha \tau\ \mbox{(recall that}\ \tau<p<r).$$
	
	So, we see that
	\begin{equation}\label{eq90}
		k(\lambda_n)\rightarrow+\infty\ \mbox{as}\ n\rightarrow\infty\ \mbox{(recall that}\ \lambda_n\rightarrow0^+).
	\end{equation}
	
	Then from (\ref{eq89}) and (\ref{eq90}), we infer that there exists $n_1\in\NN$ such that
	\begin{equation}\label{eq91}
		\varphi_{\lambda_n}(u)\geq k(\lambda_n)>0=\varphi_{\lambda_n}(0)\ \mbox{for all}\ n\geq n_1\ \mbox{and all}\ ||u||=\lambda^{-\alpha}_n.
	\end{equation}
	
	Hypothesis $H(f)(ii)$ implies that if $u\in D_+$, then
	\begin{equation}\label{eq92}
		\varphi_{\lambda_n}(tu)\rightarrow-\infty\ \mbox{as}\ t\rightarrow+\infty,\ \mbox{for all}\ n\in\NN.
	\end{equation}
	
	Moreover, as in  Claim~\ref{c5} in the proof of Proposition \ref{prop16}, we can check that
	\begin{equation}\label{eq93}
		\varphi_{\lambda_n}(\cdot)\ \mbox{satisfies the C-condition for all}\ n\in\NN.
	\end{equation}
	
	Then (\ref{eq91}), (\ref{eq92}), (\ref{eq93}) permit the use of Theorem \ref{th1} (the mountain pass theorem). So, we can find $\hat{u}_n\in W^{1,p}(\Omega)$ such that
	\begin{eqnarray*}
		&&\hat{u}_n\in K_{\varphi_{\lambda_n}}\ \mbox{and}\ k(\lambda_n)\leq\varphi_\lambda(\hat{u}_n)\leq c_{27}(1+||\hat{u}_n||^r)\ \mbox{for some}\ c_{27}>0,\ \mbox{all}\ n\geq n_1 \\
		&& \mbox{(see hyothesis}\ H(f)(i)),\\
		&\Rightarrow &\hat{u}_n\in S^{\lambda_n}_+\subseteq D_+\ \mbox{for all}\ n\in\NN\ \mbox{and}\ ||\hat{u}_n||\rightarrow\infty\ \mbox{(see (\ref{eq90}))}.
	\end{eqnarray*}
	
	Next let $\zeta\in\left(0,\frac{1}{p}\right)$ and consider $||u||=\lambda^\zeta_n$. Then from (\ref{eq89}) we have
	\begin{eqnarray*}
		&&\varphi_{\lambda_n}(u)\geq c_{25}\lambda^{\zeta p}_n -c_{26}\left( \lambda^{\zeta\tau+1}_n+\lambda^{\zeta r+1}_n \right) \\
		&&=\lambda_n\left[c_{25}\lambda^{\zeta p-1}_n-c_{26}\left(\lambda^{\zeta\tau}_n+\lambda^{\zeta r}_n\right)\right].
	\end{eqnarray*}
	
	Let $k_0(\lambda_n)=c_{25}\lambda^{\zeta p-1}_n-c_{26}\left(\lambda^{\zeta\tau}_n+\lambda^{\zeta r}_n\right)$. Since $\zeta p-1<0$ and $\lambda_n\rightarrow0^+$, we infer that
	$$k_0(\lambda_n)\rightarrow+\infty\ \mbox{as}\ n\rightarrow+\infty\,.$$
	
	So, we can find $n_2\in\NN$ such that
	\begin{equation}\label{eq94}
		\varphi_{\lambda_n}(u)\geq \lambda_nk_0(\lambda_n)>0=\varphi_{\lambda_n}(0)\ \mbox{for all}\ n\geq n_2\ \mbox{and all}\ ||u||=\lambda^\zeta_n.
	\end{equation}
	
	Let $\overline{B}_n=\{u\in W^{1,p}(\Omega):||u||\leq\lambda^\zeta_n\},\ n\in\NN$. Hypotheses $H(a)(iv),H(f)(iv)$ and since $\tau<q<p$, imply that for every $n\in\NN$, every $u\in D_+$ and for $t\in(0,1)$ small, we have
	\begin{equation}\label{eq95}
		\varphi_{\lambda_n}(tu)<0,\ ||tu||\leq\lambda^\zeta_n\ \mbox{for all}\ n\in\NN\ \mbox{(see the proof of Proposition \ref{prop11}).}
	\end{equation}
	
	From (\ref{eq94}) and (\ref{eq95}), we see that
	\begin{equation}\label{eq96}
		0<\inf\limits_{\partial B_n}\varphi_{\lambda_n},\ \mbox{and}\ \inf\limits_{\overline{B}_n}\varphi_{\lambda_n}<0\ \mbox{for all}\ n\geq n_2.
	\end{equation}
	
	Let $\bar{c}_n=\inf\limits_{\partial B_n}\varphi_{\lambda_n}-\inf\limits_{\overline{B}_n}\varphi_{\lambda_n}>0$ for $n\geq n_2$ (see (\ref{eq96})). Using the Ekeland variational principle (see, for example, Gasinski and Papageorgiou \cite[pp. 579]{14}), given $\epsilon\in(0,\tau_n)$ ($n\geq n_2$), we can find $u^n_\epsilon\in B_n=\left\{u\in W^{1,p}(\Omega):||u||<\lambda^\zeta_n\right\}$ such that
	\begin{eqnarray}
		\varphi_{\lambda_n}(u^n_\epsilon) & \leq & \inf\limits_{\overline{B_n}}\varphi_{\lambda_n}+\epsilon \label{eq97}\\
		\varphi_{\lambda_n}(u^n_\epsilon) & \leq & \varphi_{\lambda_n}(y)+\epsilon||y-u_n||\ \mbox{for all}\ y\in\overline{B_n},\ n\geq n_2. \label{eq98}
	\end{eqnarray}
	
	Given $t\in W^{1,p}(\Omega)$, for $t>0$ small we have
	$$u^n_\epsilon+th\in \overline{B}_n.$$
	
	So, if in (\ref{eq98}) we choose $y=u^n_\epsilon+th$, then
	\begin{eqnarray}\label{eq99}
		&&-\epsilon||h||\leq\langle\varphi'_{\lambda_n}(u^n_\epsilon),h\rangle\ \mbox{for all}\ h\in W^{1,p}(\Omega), \nonumber \\
		&\Rightarrow &||\varphi'_{\lambda_n}(u^n_\epsilon)||_*\leq\epsilon\ \mbox{for all}\ n\geq n_2.
	\end{eqnarray}
	
	Let $\epsilon_m\rightarrow0^+$ and set $u^n_{\epsilon_m}=u^n_m$ for all $m\in\NN,n\geq n_2$. From (\ref{eq99}) we have
	\begin{equation}\label{eq100}
		\varphi'_{\lambda_n}(u^n_m)\rightarrow0\ \mbox{in}\ W^{1,p}(\Omega)^*\ \mbox{as}\ m\rightarrow\infty,\ n\geq n_2.
	\end{equation}
	
	But from (\ref{eq93}) we know that $\varphi_{\lambda_n}(\cdot)$ satisfies the C-condition. So, from (\ref{eq97}) and (\ref{eq100}) if follows that at least for a subsequence, we have
	\begin{equation}\label{eq101}
		u^n_m\rightarrow u_{\lambda_n}=u_n\ \mbox{in}\ W^{1,p}(\Omega)\ \mbox{as}\ m\rightarrow\infty\,.
	\end{equation}
	
	From (\ref{eq97}) and (\ref{eq101}), we infer that
	\begin{eqnarray*}
		&&\varphi_{\lambda_n}(u_n)=\inf\limits_{\overline{B}_n}\varphi_{\lambda_n}\ \mbox{for all}\ n\geq n_2, \\
		&\Rightarrow & u_n\in B_n\ \mbox{and so}\ u_n\in K_{\varphi_{\lambda_n}}\ \mbox{for all}\ n\geq n_2\ \mbox{(see (\ref{eq96})).}
	\end{eqnarray*}

	Therefore we have
	\begin{eqnarray*}
		&&u_n\in S^\lambda_+\subseteq D_+\ \mbox{and}\ ||u_n||<\lambda^\zeta_n\ \mbox{for all}\ n\geq n_2,\\
		&\Rightarrow & ||u_n||\rightarrow0\ \mbox{as}\ n\rightarrow\infty\ \mbox{(recall that}\ \lambda_n\rightarrow0^+).
	\end{eqnarray*}
\end{proof}

For every $\lambda\in(0,\lambda^*)$ we show that problem (\ref{eqp}) admits a minimal positive solution $u^*_\lambda$ and determine the monotonicity and continuity properties of the map $\lambda\mapsto u^*_\lambda$.
\begin{theorem}\label{th20}
	If hypotheses $H(a),H(\xi),H(\beta),H_0,H(f)$ hold and $\lambda\in(0,\lambda^*)$, then problem (\ref{eqp}) has a smallest positive solution $u^*_\lambda\in S^\lambda_+\subseteq D_+$ and the map
	$\lambda\mapsto u^*_\lambda$ from $(0,\lambda^*)$ into $C^1(\overline{\Omega})$ is
	\begin{itemize}
		\item ``strictly monotone", in the sense that
	$$\vartheta<\lambda\Rightarrow u^*_\lambda-u^*_\vartheta\in {\rm int}\, C^*_+\left(\Sigma_0\right)$$
	with $\Sigma_0=\{z\in\partial\Omega:u^*_\lambda(z)=u^*_\vartheta(z)\}$;\\
		\item ``left continuous", that is, if $\lambda_n\rightarrow \lambda^-<\lambda^*$, then $u_{\lambda_n}\rightarrow u_{\lambda}$ in $C^1(\overline{\Omega})$.
	\end{itemize}
\end{theorem}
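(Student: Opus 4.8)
The plan is to establish the three assertions in turn: existence of a smallest positive solution $u^*_\lambda$, then strict monotonicity of $\lambda\mapsto u^*_\lambda$, then left continuity.

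\emph{Existence of $u^*_\lambda$.} First I would show that $S^\lambda_+$ is downward directed, by the device already used for $\tilde S^\lambda_+$ (cf.\ Filippakis and Papageorgiou \cite{11} and the proof of Proposition~\ref{prop10}): given $u_1,u_2\in S^\lambda_+$, one truncates $\lambda f(z,\cdot)$ at $\min\{u_1,u_2\}$, minimizes the resulting coercive, sequentially weakly lower semicontinuous energy functional on $W^{1,p}(\Omega)$, and checks — using the concave lower bound $f(z,x)\ge\hat{\eta}_0x^{\tau-1}$ near $0^+$ from $H(f)(iv)$, as in the proof of Proposition~\ref{prop11} — that the minimizer is a nontrivial element of $S^\lambda_+$ lying below both $u_1$ and $u_2$. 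Next, by Lemma~3.10 of Hu and Papageorgiou \cite[p.~178]{19} I would extract a decreasing sequence $\{u_n\}_{n\ge1}\subseteq S^\lambda_+$ with $\inf_n u_n=\inf S^\lambda_+$. This sequence is bounded in $W^{1,p}(\Omega)$, so after passing to a subsequence with $u_n\stackrel{w}{\rightarrow}u^*_\lambda$ in $W^{1,p}(\Omega)$ and $u_n\to u^*_\lambda$ in $L^r(\Omega)$ and in $L^p(\partial\Omega)$, testing the weak formulation with $h=u_n-u^*_\lambda$ and invoking the $(S)_+$-property of $A$ (Proposition~\ref{prop4}) would give $u_n\to u^*_\lambda$ in $W^{1,p}(\Omega)$, whence $u^*_\lambda$ solves \eqref{eqp}. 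A fixed positive lower bound $\underline u\in D_+$ for $S^\lambda_+$ (from comparison with an auxiliary concave problem, as in Claim~\ref{claim2} of the proof of Proposition~\ref{prop10}) forces $u^*_\lambda\neq0$, and then Lieberman's nonlinear regularity \cite{23} together with the nonlinear maximum principle of Pucci and Serrin \cite[pp.~111,~120]{40} gives $u^*_\lambda\in D_+$. Hence $u^*_\lambda=\inf S^\lambda_+\in S^\lambda_+\subseteq D_+$ is the smallest positive solution.

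\emph{Strict monotonicity.} Fix $0<\vartheta<\lambda<\lambda^*$. Applying the construction in the proof of Proposition~\ref{prop13} with the solution $u_\lambda$ there replaced by $u^*_\lambda\in S^\lambda_+\subseteq D_+$ produces some $u_\vartheta\in S^\vartheta_+$ with $u_\vartheta\le u^*_\lambda$; by minimality of $u^*_\vartheta$ this yields $u^*_\vartheta\le u_\vartheta\le u^*_\lambda$. Then, with $\rho=||u^*_\lambda||_\infty$, $\hat{\xi}_\rho>0$ as in $H(f)(v)$, $s=\min_{\overline{\Omega}}u^*_\lambda>0$ and $\eta_s>0$ as in $H(f)(iv)$, I would combine the monotonicity of $x\mapsto f(z,x)+\hat{\xi}_\rho x^{p-1}$ on $[0,\rho]$ (applied to $u^*_\vartheta\le u^*_\lambda$) with the splitting $\lambda f(z,u^*_\lambda)=\vartheta f(z,u^*_\lambda)+(\lambda-\vartheta)f(z,u^*_\lambda)\ge\vartheta f(z,u^*_\lambda)+(\lambda-\vartheta)\eta_s$, exactly as in the proofs of Corollary~\ref{cor14} and Proposition~\ref{prop15}, to obtain a strict pointwise differential inequality for $u^*_\vartheta$ versus $u^*_\lambda$ with constant gap $(\lambda-\vartheta)\eta_s>0$. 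Proposition~\ref{prop7}, applied with $\hat{\xi}=\xi+\vartheta\hat{\xi}_\rho\in L^\infty(\Omega)$, $\hat{\xi}\ge0$, then yields $u^*_\lambda-u^*_\vartheta\in{\rm int}\,C^*_+(\Sigma_0)$ with $\Sigma_0=\{z\in\partial\Omega:u^*_\lambda(z)=u^*_\vartheta(z)\}$, which is the asserted strict monotonicity (and in particular $u^*_\vartheta\le u^*_\lambda$).

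\emph{Left continuity.} Given $\lambda_n\to\lambda^-$ with $\lambda<\lambda^*$, after discarding finitely many terms I may assume $0<\mu_0\le\lambda_n<\lambda$ for all $n$; fix $\mu\in(\lambda,\lambda^*)$. By the strict monotonicity just proved, $u^*_{\lambda_n}\le u^*_\mu$ for all $n$, while the fixed lower bound $\underline u\in D_+$ attached to the parameter $\mu_0$ satisfies $\underline u\le u^*_{\lambda_n}$ for all $n$ (by monotonicity of the auxiliary minimal solutions in the parameter); hence $\{u^*_{\lambda_n}\}_{n\ge1}$ is bounded in $L^\infty(\Omega)$. Lieberman's regularity estimates then bound it in $C^{1,\beta}(\overline{\Omega})$ for some $\beta\in(0,1)$, so by the compact embedding $C^{1,\beta}(\overline{\Omega})\hookrightarrow C^1(\overline{\Omega})$ a subsequence converges in $C^1(\overline{\Omega})$ to some $\tilde u_\lambda$ with $\underline u\le\tilde u_\lambda\le u^*_\mu$, whence $\tilde u_\lambda\in D_+$. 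Passing to the limit in the weak formulation of \eqref{eqp} for $\lambda_n$ shows $\tilde u_\lambda\in S^\lambda_+$, so $u^*_\lambda\le\tilde u_\lambda$ by minimality, while $u^*_{\lambda_n}\le u^*_\lambda$ for all $n$ (monotonicity, $\lambda_n<\lambda$) forces $\tilde u_\lambda\le u^*_\lambda$; thus $\tilde u_\lambda=u^*_\lambda$. Since every subsequence of $\{u^*_{\lambda_n}\}$ admits a further subsequence converging in $C^1(\overline{\Omega})$ to the same limit $u^*_\lambda$, the whole sequence converges, i.e.\ $u^*_{\lambda_n}\to u^*_\lambda$ in $C^1(\overline{\Omega})$.

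I expect the main obstacle to be securing the uniform positive lower bound $\underline u\in D_+$ for the solution set (with the right monotone dependence on the parameter): this is precisely what makes the descending sequence produce a \emph{nontrivial} limit, so that $u^*_\lambda$ is genuinely a positive solution, and what keeps $\{u^*_{\lambda_n}\}$ bounded away from $0$ in $L^\infty$ in the continuity argument. Once this bound and the correct positive gap $(\lambda-\vartheta)\eta_s$ (needed to invoke Proposition~\ref{prop7}) are in hand, the remaining steps are the by-now-standard combination of downward directedness, the $(S)_+$-property of $A$, and Lieberman's nonlinear regularity theory.
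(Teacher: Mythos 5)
Your proposal is correct and follows essentially the same route as the paper: a decreasing sequence from the downward-directed set $S^\lambda_+$ plus the $(S)_+$-property of $A$, a global concave-type lower bound $f(z,x)\geq\hat{\eta}_0x^{\tau-1}-c_{28}x^{r-1}$ feeding an auxiliary problem whose unique positive solution minorizes $S^\lambda_+$ (so the limit is nontrivial), the Corollary~\ref{cor14} comparison via Proposition~\ref{prop7} for strict monotonicity, and Lieberman estimates with the compact embedding $C^{1,\alpha}(\overline{\Omega})\hookrightarrow C^{1}(\overline{\Omega})$ for left continuity. You in fact spell out several points the paper leaves implicit (downward directedness of $S^\lambda_+$ itself, re-running the Corollary~\ref{cor14} argument on the pair $u^*_\vartheta\leq u^*_\lambda$, and the subsequence principle in place of monotonicity of $\{\lambda_n\}$), but these are elaborations of the same argument rather than a different one.
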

\begin{proof}
	From Lemma 3.10 of Hu and Papageorgiou \cite[p. 178]{19}, we know that we can find $\{u_n\}_{n\geq1}\subseteq S^\lambda_+$ such that
	$$\inf S^\lambda_+=\inf\limits_{n\geq 1}u_n,\ u_n\leq\tilde{u}_\lambda\ \mbox{for all}\ n\in\NN\ \mbox{(see the proof of Proposition \ref{prop11})}$$
	Evidently, $\{u_n\}_{n\geq1}\subseteq W^{1,p}(\Omega)$ is bounded and so we may assume that
	\begin{equation}\label{eq102}
		u_n\stackrel{w}{\rightarrow}u^*_\lambda\ \mbox{in}\ W^{1,p}(\Omega)\ \mbox{and}\ u_n\rightarrow u^*_\lambda\ \mbox{in}\ L^r(\Omega)\ \mbox{and in}\ L^p(\partial\Omega).
	\end{equation}
	
	We have for all $h\in W^{1,p}(\Omega)$
	\begin{equation}\label{eq103}
		\langle A(u_n),h\rangle+\int_\Omega\xi(z)u^{p-1}_nhdz+\int_{\partial\Omega}\beta(z)u^{p-1}_nhd\sigma=\lambda\int_\Omega f(z,u_n)hdz\,.
	\end{equation}
	
	In (\ref{eq103}) we choose $h=u_n-u^*_\lambda\in W^{1,p}(\Omega)$. Passing to the limit as $n\rightarrow\infty$ and using (\ref{eq102}), we obtain
	\begin{eqnarray}\label{eq104}
		&&\lim\limits_{n\rightarrow\infty}\langle A(u_n),u_n-u^*_\lambda\rangle=0 \nonumber \\
		&\Rightarrow &u_n\rightarrow u^*_\lambda\ \mbox{in}\ W^{1,p}(\Omega)\ \mbox{(see Proposition \ref{prop4}}).
	\end{eqnarray}
	
	Hence if in (\ref{eq103}) we pass to the limit as $n\rightarrow\infty$ and use (\ref{eq104}), then
	$$\langle A(u^*_\lambda),h\rangle+\int_\Omega\xi(z)(u^*_\lambda)^{p-1} hdz+\int_{\partial\Omega}\beta(z)(u^*_\lambda)^{p-1} hd\sigma=\lambda\int_\Omega f(z,u^*_\lambda)hdz\ \mbox{for all}\ h\in W^{1,p}(\Omega)$$
	$\Rightarrow u^*_\lambda$ is a nonnegative solution of (\ref{eqp}) (see Papageorgiou and R\u adulescu \cite{27}).
	
	Hypotheses $H(f)(i),(iv)$ imply that we can find $c_{28}>0$ such that
	\begin{equation}\label{eq105}
		f(z,x)\geq \hat{\eta}_0 x^{\tau-1}-c_{28}x^{r-1}\ \mbox{for almost all}\ z\in\Omega,\ \mbox{all}\ x\geq0.
	\end{equation}
	
	We consider the following auxiliary Robin problem
	\begin{equation}\label{eqaulam''}\tag*{$(Au_\lambda)^{''}$}
		\left\{ \begin{array}{l}
			-{\rm div}\,a(Du(z))+\xi(z)u(z)^{p-1}=\lambda(\hat{\eta}_0 u(z)^{\tau-1}-c_{28}u(z)^{r-1})\ \mbox{in}\ \Omega,\\
			\frac{\partial u}{\partial n_a}+\beta(z)u^{p-1}=0\ \mbox{on}\ \partial\Omega, \ u> 0,\ \lambda>0.
		\end{array} \right\}
	\end{equation}
	
	As in the proof pf Proposition \ref{prop10} (there we had the auxiliary problem \ref{eqaulam'}), problem \ref{eqaulam''}  has a unique positive solution $\overline{u}^*_\lambda\in D_+$ for all $\lambda>0$ and
	$$\overline{u}^*_\lambda\leq u\ \mbox{for all}\ u\in S^\lambda_+\ \mbox{(see (\ref{eq105})).}$$
	
	So, we have
	\begin{eqnarray*}
	&&\overline{u}^*_\lambda\leq u_n\ \mbox{for all}\ n\in\NN, \\
	&\Rightarrow &\overline{u}^*_\lambda \leq u^*_\lambda, \\
	&\Rightarrow &u^*_\lambda\in S^*_\lambda\ \mbox{and}\ u^*_\lambda=\inf S^\lambda_+.
	\end{eqnarray*}
	
	From Corollary \ref{cor14}, we infer the strict monotonicity of the map $\lambda\mapsto u^*_\lambda$.
	
	Finally, suppose that $\{\lambda_n,\lambda\}_{n\geq1}\subseteq(0,\lambda^*)$ and $\lambda_n\rightarrow \lambda^-$. Then
	\begin{eqnarray*}
		&&u^*_{\lambda_n}\leq \tilde{u}_{\lambda^*}\ \mbox{for all}\ n\in\NN\ \mbox{(see the proof of Proposition \ref{prop11})}, \\
		&\Rightarrow &\{u^*_{\lambda_n}\}_{n\geq1}\subseteq W^{1,p}(\Omega)\ \mbox{is bounded}.
	\end{eqnarray*}
	
	From Lieberman \cite{23}, we know that there exist $\alpha\in(0,1)$ and $M_5>0$ such that
	\begin{equation*}
		u_n\in C^{1,\alpha}(\overline{\Omega})\ \mbox{and}\ ||u_n||_{C^{1,\alpha}(\overline{\Omega})} \leq M_5\ \mbox{for all}\ n\in\NN.
	\end{equation*}
	
	Exploiting the compact embedding of $C^{1,\alpha}(\overline{\Omega})\ \mbox{into}\ C^1(\overline{\Omega})$ we have
	\begin{equation}\label{eq106}
		u^*_{\lambda_n}\rightarrow\tilde{u}^*_\lambda\ \mbox{in}\ C^1(\overline{\Omega})	
	\end{equation}
	(here we have the original sequence since it is increasing).
	
	Suppose that $\tilde{u}^*_\lambda\neq u^*_\lambda$. Then we can find $z_0\in\Omega$ such that
	\begin{eqnarray*}
		&&u^*_\lambda(z_0)<\tilde{u}^*_\lambda(z_0), \\
		&\Rightarrow & u^*_\lambda(z_0)<u^*_{\lambda_n}(z_0)\ \mbox{for all}\ n\geq n_0\ \mbox{(see (\ref{eq106})).}
	\end{eqnarray*}
	
	This contradicts the monotonicity of $\lambda\mapsto u^*_\lambda$. Therefore $\tilde{u}^*_\lambda=u_\lambda$ and the map $\lambda\mapsto u^*_\lambda$ is left continuous.
\end{proof}

	\medskip
{\bf Acknowledgments.} This research was supported by the Slovenian Research Agency grants P1-0292, J1-8131, J1-7025. V.D. R\u adulescu acknowledges the support through a grant of the Romanian National Authority for Scientific Research and Innovation, CNCS-UEFISCDI, project number PN-III-P4-ID-PCE-2016-0130.

\end{document}